\title{\sf Optimal shape of stellarators for magnetic confinement fusion}
\author{ Yannick Privat\footnote{IRMA, Universit\'e de Strasbourg, CNRS UMR 7501, Inria, 7 rue Ren\'e Descartes, 67084 Strasbourg, France ({\tt yannick.privat@unistra.fr}).}~\footnote{Institut Universitaire de France (IUF).}
	\and R\'emi Robin\footnote{Laboratoire Jacques-Louis Lions, Sorbonne Universit\'e, Paris, France ({\tt remi.robin@inria.fr}).}
	\and Mario Sigalotti\footnote{Inria, France ({\tt mario.sigalotti@inria.fr}).}
}
\newcommand{\R}{\mathbb{R}}
\newcommand{\N}{\mathbb{N}}
\newcommand{\Z}{\mathbb{Z}}
\newcommand{\eps}{\varepsilon}
\newcommand{\F}{\mathcal{F}}
\newcommand{\rmin}{r_{\rm{min}}}
\newcommand{\Oad}{\mathscr{O}_{\rm ad}}
\DeclareMathOperator{\dive}{div}
\DeclareMathOperator{\curl}{curl}
\newcommand{\grad}{\nabla}
\DeclareMathOperator{\Ima}{Im}
\DeclareFontFamily{U}{tipa}{}
\DeclareFontShape{U}{tipa}{m}{n}{<->tipa10}{}
\newcommand{\arc@char}{{\usefont{U}{tipa}{m}{n}\symbol{62}}}%
\newcommand{\arc}[1]{\mathpalette\arc@arc{#1}}
\newcommand{\arc@arc}[2]{%
  \sbox0{$\m@th#1#2$}%
  \vbox{
    \hbox{\resizebox{\wd0}{\height}{\arc@char}}
    \nointerlineskip
    \box0
  }%
}
\renewcommand{\geq}{\geqslant}
\renewcommand{\leq}{\leqslant}
\newtheorem{theorem}{Theorem}
\newtheorem{proposition}{Proposition}
\newtheorem{definition}{Definition}
\newtheorem{lemma}{Lemma}
\theoremstyle{definition}
\theoremstyle{definition}\newtheorem{remark}{Remark}
\newcommand{\ms}[1]{{\color{magenta} #1}}
\begin{document}
\maketitle

\begin{abstract}
We are interested in the design of stellarators, devices for the production of controlled nuclear fusion reactions alternative to tokamaks. The confinement of the plasma is entirely achieved by a helical magnetic field created by the complex arrangement of coils fed by high currents around a toroidal domain. Such coils describe a surface called ``coil winding surface'' (CWS). In this paper, we model the design of the CWS as a shape optimization problem, so that the cost functional reflects both optimal plasma confinement properties, through a least square discrepancy, and also manufacturability, thanks to geometrical terms involving the lateral surface or the curvature of the CWS. 

We completely analyze the resulting problem: on the one hand, we establish the existence of an optimal shape, prove the shape differentiability of the criterion, and provide the expression of the differential in a workable form. 
On the other hand, we propose a numerical method and perform simulations of optimal stellarator shapes. We discuss the efficiency of our approach with respect to the literature in this area.
\end{abstract}

\noindent\textbf{Keywords:} shape optimization, plasma Physics, Biot and Savart operator, Riemannian manifolds.

\medskip

\noindent\textbf{AMS classification (MSC 2020):} 49Q10, 49Q12, 78A25, 65T40.

\section{Introduction}

\subsection{Motivations: towards a shape optimization problem}
Nuclear fusion is a nuclear reaction involving the use of light nuclei.
In order to produce energy by nuclear fusion, high temperature plasmas\footnote{This is a particular state of matter when it becomes totally \emph{ionized}, i.e., when all its atoms have lost one or more peripheral electrons. This is the most common state of matter in the universe because it is found (at 99\%) in the stars, the interstellar medium, and earth's ionosphere.} must be produced and confined. For these reactions to occur, the nuclei must get close to each other at very small distances. They must therefore overcome the Coulomb repulsion. This happens naturally in a plasma during collisions if the energy of the nuclei is sufficient. This is the objective of devices called tokamaks, steel magnetic confinement chambers that allow a plasma to be controlled in order to study and experiment with energy production by nuclear fusion. The magnetic confinement technique allows to maintain a sufficient temperature and density of the plasma, in an intense magnetic field. 
The simplest configuration 
for the magnetic field
is the toroidal solenoid; this is the configuration found in most current experiments.

Unfortunately, the magnetic field is not uniform in general, which causes a vertical drift of the particles, in opposite directions for the ions and for the electrons. This charge separation creates a vertical electric field which, in turn, causes the particles to drift out of the torus. This phenomenon dramatically reduces the confinement. To get around this obstacle, the effect of such drifts is canceled   by giving a poloidal component\footnote{The terms toroidal and poloidal refer to directions relative to a torus of reference. The poloidal direction follows a small circular ring around the surface, while the toroidal direction follows a large circular ring around the torus, encircling the central void. } to the magnetic field: the field lines are wound on nested toroids. Thus, the particles, following the magnetic field lines, have their vertical drift cancelled at each turn.
In a tokamak, the poloidal magnetic field is created by a toroidal electric current circulating in the plasma. This current is called \emph{plasma current}. 

A possible alternative to correct the problems of drift of magnetically confined plasma particles in a torus is to modify the toroidal shape of the device, by breaking the axisymmetry, yielding to the concept of stellarator.
A stellarator is analogous to a tokamak except that it does not use a toroidal current flowing inside the plasma to confine it. The poloidal magnetic field is generated by external coils, or by a deformation of the coils responsible for the toroidal magnetic field. This system has the advantage of not requiring plasma current and therefore of being able to operate continuously; but  it comes at the cost of more complex 
coils (non-planar coils) and of a more important neoclassical transport \cite{Helander-book}. 

The confinement of the plasma is then entirely achieved by a helical magnetic field created by the complex arrangement of coils around the torus, supplied with strong currents and called poloidal coils. 

Despite the promise of very stable steady-state fusion plasmas, stellarator technology also presents significant challenges related to the complex arrangement of magnetic field coils. These magnetic field coils are particularly expensive and especially difficult to design and fabricate due to the complexity of their spatial arrangement. 

In this paper, we are interested in the search for the optimal shape of stellarators, i.e., the best coil arrangement (provided that it exists) to confine the plasma. In general, two steps are considered: first, the shape of the plasma boundary is determined in order to optimize the physical properties, among which the neoclassical transport and the magnetohydrodynamic (MHD) stability. In a second step, we search for the coil shapes producing approximately the ``target" plasma shape resulting from the previous step.

In this article, we focus entirely on the second step, assuming that the target magnetic field $B_T$ is known. It is then convenient to define a coil winding surface (CWS) on which the coils will be located (see \Cref{fig:CWS}). The optimal arrangement of stellarator coils corresponds then to the determination of a closed surface (the CWS) chosen to guarantee that the magnetic field created by the coils is as close as possible to the target magnetic field $B_T$. Of course, it is necessary to consider feasibility and manufacturability constraints. We will propose and study several relevant choices of such constraints in what follows.

    \begin{figure}[h!]
        \begin{center}
        \includegraphics[width=0.5\textwidth]{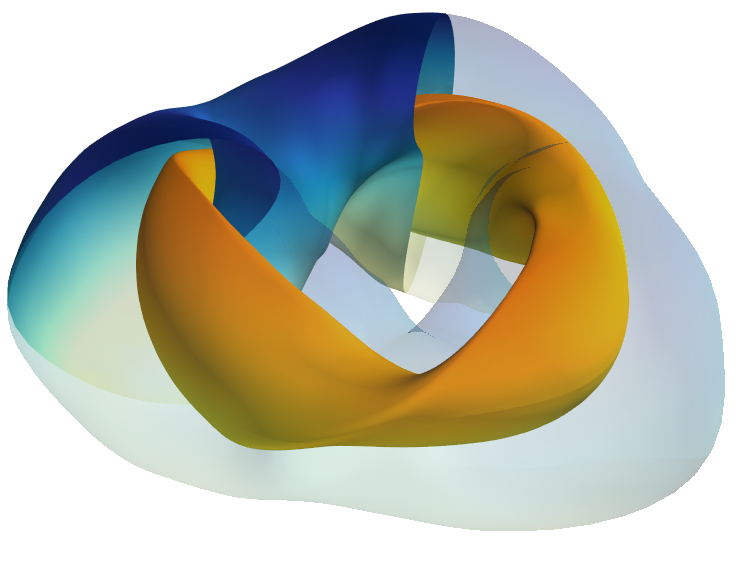}
        \caption{CWS (blue and white) and plasma surface (orange) of the National Compact Stellarator Experiment (NCSX) designed by the Princeton Plasma Physics Laboratory. There is a three-folds discrete symmetry in the design.\label{fig:CWS}}
    \end{center}
    \end{figure}

\subsection{State of the art and main contributions of this article}
The question of determining the best location of coils around a stellarator, reformulated as an optimal surface problem, is a major issue for the construction of stellarators with efficient confinement properties. The physical and mathematical literature dedicated to plasmas is rich of references on this issue. We mention hereafter a non-exhaustive list of various important contributions around this problem.
Let us first mention \cite{imbert-gerardIntroductionSymmetriesStellarators2019}, where all the basic theoretical elements to understand the modeling of stellarator magnetic fields are gathered.

Regarding optimal design issues, let us distinguish between several optimization/optimal control approaches and modeling choices. Each discrete stellarator coil can be represented as a closed one-dimensional curve embedded in $\R^3$ \cite{Zhu_2017,Zhu_2018, Zhu_2018b}. In these references, several optimization methods are tested among which the steepest descent and Newton like methods. 

Another common choice consists in using the aforementioned CWS, in other words to define a closed toroidal winding surface enclosing the plasma surface on which all coils lie. Two kinds of issues related to the optimal design of stellarators can then be addressed. The simplest is to assume the CWS to be given, and to look for currents  on  this  surface generating the desired magnetic field for confining the plasma. Indeed, in the limit of a large
number of coils, a set of discrete coils can be described by a continuous current density on the CWS.
Let us mention NESCOIL \cite{merkelSolutionStellaratorBoundary1987,Pomphrey_2001}, where the current potential representing a surface current distribution is sought such that the normal component of the magnetic field vanishes in a least-squares sense at the plasma boundary. In the same vein, REGCOIL \cite{landremanImprovedCurrentPotential2017} improves the NESCOIL approach by adding a Tikhonov regularization term in the minimization functional whereas COILOPT \cite{doi:10.13182/FST02-A206} uses an explicit representation of modular coils on a toroidal winding surface. A review of such approaches can be found in \cite{Gates_2017}. Recently, a similar problem where an extra Laplace forces penalization term is taken into account has been investigated in \cite{Robin-Volpe}.

A much more difficult problem is to determine the CWS and the density current distribution at the same time. This is expected to improve the performances of the resulting device. On the other hand, this approach requires solving a dual optimization problem, including a rather challenging surface optimization problem. This is 
the main purpose of this article. In the following we mention some of the many contributions on this topic and position our contribution through this literature. In \cite{Paul_2018}, this problem is modeled using a cost functional written as the weighted sum of four terms: the first one is the surface-integrated-squared  normal  magnetic field on the desired plasma surface. The second is the opposite of the total volume enclosed by the 
coil-winding surface, acting to enforce the coil-plasma separation. The third one  is a measure of  the  spectral  width of the Fourier series describing the coil-winding surface. This allows to overcome the non-uniqueness of the Fourier series representation of the coil-winding surface. The last one is the $L^2$ norm  of  the  current  density, allowing to  obtain  coils  with  good  manufacturing properties. It is important to note here, and this is related to the motivation for this paper, that the approach developed in \cite{Paul_2018} rests upon a (truncated) Fourier series parameterization of the surface equation. The authors thus compute derivatives of their cost with respect to these Fourier coefficients. 

In \cite{paul_abel_landreman_dorland_2019}, a more complex model involving a drift kinetic equation is considered and similar shape optimization issues are investigated. 

In what follows, we propose a continuous approach, which does not rely on any parameterization of the surfaces involved. We use the notion of Hadamard variation and shape derivative. We rigorously analyze, in a continuous framework,  the sensitivity with respect to the domain
of a REGCOIL-type cost.  
We thus obtain intrinsic expressions with respect to any parametrization.  This makes our approach flexible and the formulas obtained by using developments of the parametric equation of the surfaces in Fourier series can be adapted without any difficulty to other choices of parametrization.  We also propose several choices of manufacturability terms in the cost functional and discuss their relevance.

\medskip

The issues addressed in the following as well as our main contributions are summed-up hereafter:
\begin{itemize}
\item \textbf{Modeling of the problem (Section~\ref{sec:model}).} Using the CWS concept, we propose a continuous formulation of the question of the best coil arrangement as a shape optimization problem, regardless of any surface parametrization. In particular, several choices of manufacturing constraints are proposed. They are integrated to the cost functional using a penalization/regularization term. From the mathematical point of view, the main issue comes to minimize a functional involving the trace of the solution of an elliptic partial differential equation (PDE) on a manifold, under geometrical constraints involving the 
distance 
to this manifold. 
\item \textbf{Analysis of the shape optimization problem (Sections~\ref{sec:analyseExist} and \ref{sec:diffSOP}).} Having in mind the determination of an efficient algorithm for finding an optimal form for the above problem, we focus mainly on two questions. The first one is dedicated to the existence of an optimal shape (Section~\ref{sec:existence_SOP}). In this context, the developed approach is not completely standard and requires to carefully establish semicontinuity properties of the trace of the solution of the PDE on manifolds satisying a uniform regularity property. The second one concerns the establishment of optimality conditions using the notion of form derivative (Section~\ref{sec:shapeD}). Here again, due to the particular nature of the PDE at stake, the classical approach cannot be used in a direct way and many adaptations are necessary. We establish a workable expression of this derivative, which is the basis of the numerical approaches developed in the next section. 
\item \textbf{Numerical implementation (Section~\ref{sec:num}).} A relevant aspect of this paper is that the study of the sensitivity of the studied criterion to a variation of the shape of the stellarator is carried out without using any parameterization of the surface to be designed. As a result, the sensitivity relations 
obtained at the end of the previous step are totally intrinsic with respect to any parameterization of the surface. 
As a consequence, we can apply the more robust ``optimize then discretize" approach, 
instead of a ``discretize then optimize" procedure as in most of the methods implemented for this application.
The shape derivatives
constitute the basis of a quasi-Newton optimization method that we implement by using a parametric representation of the surface in terms of Fourier series.  
\end{itemize}

\subsection{Notations}\label{sec:notations}

In what follows, the notation $S$ is used to denote a $\mathscr{C}^{1,1}$ toroidal surface\footnote{By \emph{toroidal surface}, we mean here the range of the toroidal solenoid by a homeomorphism. In what follows, we will rather consider smooth toroidal surfaces, where the wording ``smooth" refers to at least $\mathscr{C}^{1,1}$ regularity.\label{footn:toroidalM}} in $\R^3$, equipped with the Riemannian metric 
induced by the canonical embedding $i_S:S \hookrightarrow \R^3$, {i.e., the scalar product between two vectors $v$ and $w$ tangent to $S$ at a common point is equal to 
$\langle v,w\rangle$, where $\langle \cdot,\cdot \rangle$ denotes the Euclidean scalar product in $\R^3$}.  
We also denote by $\mu_S$ the associated Riemannian volume form, which coincides with the two-dimensional Hausdorff measure on $S$ (\cite[Theorem~2.10.10]{federer} and \cite[Theorem~2 in Section~2.2]{MR1158660}). 
We write $V$ to denote the bounded domain of $\R^3$ such that $S=\partial V$.

Throughout this article, we use the following notation:
\begin{itemize}
\item 
For $n\in \N^*=\{1,2,\dots\}$ and any integer $m\ge n$, $\mathscr{H}^n$ denotes the $n$-dimensional Hausdorff measure in $\R^m$;
\item $P$ denotes a smooth 
toroidal domain\footnote{\emph{
toroidal domain} stands for any three-dimensional domain whose boundary is a toroidal surface} of $\R^3$ standing for the plasma domain;
\item $\mathfrak{X} (S)$ denotes the set of smooth tangent vector fields on $S$;
\item 
$\mathscr{F}_S=L^2(\Gamma(TS))$ denotes the completion of $\mathfrak{X} (S)$  for the inner product
$$
\forall (X_1,X_2)\in \mathfrak{X} (S)^2, \qquad \langle X_1 , X_2 \rangle_{\mathscr{F}_S} =\int_S \langle X_1, X_2 \rangle
d\mu_S;
$$
\item $\times$ denotes the cross product in $\R^3$;
\item given a function $F:\R^{n_1}\to \R^{n_2}$ and $x\in \R^{n_1}$, $DF(x)$ denotes the $n_1\times n_2$ Jacobian matrix of $F$. In the case where $n_1=n_2$, $|DF|$ stands for the absolute value of the determinant of $F$. The symbol $D_x$ is used to denote the Jacobian operator with respect to the (vector) variable $x$;
\item 
$\nabla_{S}$ denotes the \emph{tangential gradient} to $S$ in $\R^3$, defined for every differentiable function $f:\R^3\to\R$ by $\nabla_{S}f=\nabla f-\langle \nabla f , \nu \rangle \nu$ on $S$, where $\nu$ stands for the outward normal vector to $V$.  Similarly, the notation $\operatorname{div}_S$ stands for the \emph{tangential divergence} given by $\operatorname{div}_S\theta=\operatorname{div}\theta- \langle D\theta\nu , \nu \rangle$ on $S$, where 
$\theta$ is a vector field on $\R^3$;
\item The norm on $\mathfrak{X}(S)$ induced by the inner product $\langle\cdot , \cdot\rangle_{\mathscr{F}_S}$ is denoted $\Vert \cdot\Vert_{\mathscr{F}_S}$;
\item $\mathscr{F}_S^0$ is the closure under the norm $\Vert\cdot\Vert_{\mathscr{F}_S}$ of the (tangential) divergence-free vectors of $\mathfrak{X}(S)$;
\item The flat two-dimensional torus is denoted by $T=(\R/\Z)^2$.
$\mathfrak{X}(T)$, $\mathscr{F}_T$ and $\mathscr{F}_T^0$ are defined similarly to what has been done above;
\item 
The Hausdorff distance  $d_V$ and the signed distance $b_V$  from $V$ are defined as:
    $$d_V(x)=\inf_{y\in V} |x-y|, \qquad  b_V(x)=d_V(x) - d_{\R^3 \setminus V}(x);$$
\item If $h>0$, the \emph{$h$-tubular neighborhood} $U_h(V)$ of $V$ is the level set 
    $$ U_h(V)=\{ x\in \R^3 \mid d_V(x)<h\}$$
    of $d_V$;
\item The \emph{reach of $V$} \cite{federer}   is given by
    $$
    \operatorname{Reach}(V)=\sup \{h>0 \mid d_V \text{ is differentiable on }U_h(V)\setminus \bar{V}\}.
    $$
 More explanations are provided in Appendix~\ref{sec:append:reach}. For more exhaustive informations about this notion, we refer to \cite{federer} and \cite[Sect. 6.6]{delfour_shapes_2011};
\item Given a differentiable vector field
 $\theta:\R^3\to \R^3$, we denote by 
 $e(\theta)$
the symmetric part of the Jacobian matrix $D\theta$, that is,
\begin{equation}\label{eq:*}
e(\theta)=D\theta+(D\theta)^T;
\end{equation}
\item For two Banach spaces $E$ and $F$, we denote by $\mathcal{L}(E,F)$ the Banach space of continuous linear maps from $E$ to $F$ and by $\mathcal{L}(E)$ the Banach space of continuous endomorphisms;
\item the adjoint of a linear operator $L$ is denoted by $L^\dagger$;
\item If $A$ and $B$ denote two matrices in $\mathbb{M}_3(\R)$, we define their {\it doubly contracted product} as
$$
A:B=\sum_{i,j=1}^3A_{ij}B_{ij};
$$
\item $I_3$  denotes the identity matrix in $\R^3$.
\end{itemize}
\subsection{Modeling: towards a shape optimization problem}\label{sec:model}
Since we are interested in solving a shape optimization problem whose unknown is the coil winding surface $S$, we are led to make some 
assumptions on $S$ motivated by the application under consideration. 
In particular, we  assume in what follows that the distance $d(S,P)$ between $S$ and the plasma domain $P$  is uniformly bounded from below, namely,
we fix $\delta>0$ and we require that 
\begin{equation}\label{H1} \tag{$\mathscr{H}_{{\rm dist},P,\delta}$}
   d(S,P)= \inf_{x\in S,y\in P} |x-y|=\inf_{x\in S}d_P(x) \geq \delta.
\end{equation}

We now introduce the main operator we will deal with, which plays a crucial role in electromagnetism: the so-called {\it Biot and Savart} operator. This operator associates with each current distribution on  $S$ the corresponding magnetic field in $P$. 
It can be considered as a kind of inverse of the curl operator.

\begin{definition}[The Biot and Savart operator $\operatorname{BS}_S$ \cite{ENCISO201885}]\label{def:opBS}
Let $S$ be a smooth two-dimensional manifold and $X$ belong to $\mathscr{F}_S$. 
Let $\delta_S$ denote the single layer distribution supported on $S$ defined by
$$\forall \varphi \in \mathscr{C}^\infty_c(\R^3,\R^3),\qquad  \langle X \delta_S, \varphi \rangle = \int_S  \langle\varphi, X\rangle d\mu_S.$$
Let $u$ denote the unique distributional solution of the PDE
$$
\left\{\begin{array}{ll}
\nabla \times u= X \delta_S & \text{in }\mathscr{D'}(\R^3)\\
\langle\nabla, u\rangle=0 & 
\end{array}\right.
$$
that falls off at infinity, i.e.,
\[u(y)=\int_S \frac{(x-y)\times X(x)}{|x-y|^3} d\mu_S(x),\qquad y\in \R^3\setminus S.\]
Then the {\it Biot and Savart} operator 
is defined as the map 
$\operatorname{BS}_S:\mathscr{F}_S  \longrightarrow  L^2(P, \R^3)$
associating with $X$ the restriction of $u$ to the plasma domain $P$.
By introducing the kernel $K$ given by 
\begin{align*}
    K : [\R^3]^2\backslash \{(x,x)\mid x \in \R^3\}  &\longrightarrow \R^3 
    \\
 (x,y) &\longmapsto  
 \frac{x-y}{|x-y|^3}, 
\end{align*}
one has
\begin{equation}\label{eq:BS-with-kernel}
\operatorname{BS}_S(X)(y)=\int_S K(x,y)\times X(x) d\mu_S(x),\qquad y\in P.
\end{equation}
\end{definition}

\begin{remark}
According to \eqref{H1}, the restriction of $K$ to $S\times P$ is uniformly bounded.  
By standard regularity results for parameterized integrals, the mapping $P\ni y \mapsto \operatorname{BS}_S(X)(y)$ is smooth and the operator $\operatorname{BS}_S$, seen as going from $\mathscr{F}_S$ to $\mathscr{C}^k(P,\R^3)$ with $k\in \N\cup \{+\infty\}$, is continuous.
As a consequence, the operator $\operatorname{BS}_S:\mathscr{F}_S \to L^2(P,\R^3)$ is compact.

In what follows, we will use several times that for every $x,y,h\in \R^3$ with $x\ne y$ we have
\begin{align}
    \label{eq:der_K}
  D_x K(x,y)(h)
    =\lim_{\varepsilon\searrow 0}\frac{K(x+\varepsilon h,y)-K(x,y)}{\varepsilon} 
    =
     \frac{h}{|x-y|^3}- \frac{3\langle (x-y), h \rangle (x-y)}{|x-y|^5}.
\end{align}
\end{remark}

\paragraph{Computation of the optimal current $\boldsymbol{j}$.}
In view of modeling the optimal design problem we will deal with, let us now introduce a target magnetic field $B_T \in L^2(P, \R^3)$.

The target magnetic field $B_T$ being given, we model the optimal design of a stellarator problem as a kind of regularized least square problem, where one aims at determining  both the current $j$ and the manifold shape $S$ leading to the  magnetic field closest to $B_T$ on $S$.
To this aim, and according to the REGCOIL procedure \cite{landremanImprovedCurrentPotential2017}, we introduce the shape functional $C$ defined, for every closed smooth two-dimensional manifold $S$, as 
\begin{equation}\tag{$\mathscr{P}_{S}$}\label{defpb:PS}
 \boxed{C(S)= \inf_{j\in \mathscr{F}_S^0} \|\operatorname{BS}_S j -B_T \|^2_{L^2(P,\R^3)} + \lambda \|j\|^2_{\mathscr{F}_S}, }
\end{equation}
where $\lambda>0$ denotes a regularization parameter.

\paragraph{The shape optimization problem.}

To state the shape optimization problem that we will consider, let us first define the set of admissible manifolds. We gather hereafter several conditions evoked previously 
that we will take into account in the search of the CWS. 

\begin{itemize}
   \item {\it Topology and uniform boundedness.} To preserve the topology of the device (see Footnote~\ref{footn:toroidalM}), we will only consider CWSs that are two-dimensional closed toroidal manifolds. Moreover, we will fix a compact set $D$ of $\R^3$ and we require the CWS to be contained in $D$. 
        \item {\it Uniform distance constraint of the coils to the plasma}. To build the vacuum vessel around the plasma, we will assume that the CWS satisfies assumption \eqref{H1}.
    \item \textit{Manufacturing cost}. In order to avoid irregular shapes that are too difficult to build, we will assume that the CWS has a minimal regularity, say $\mathscr{C}^{1,1}$, 
    and a minimal reach condition.
    More precisely, we will assume that the reach
    of the CWS is uniformly bounded from below by some $r_{\min}>0$. We recall that this condition imposes that the curvature radii (where they can be defined) are larger than $r_{\min}$ and that there is no bottleneck of distance smaller than $2r_{\min}$ (see, e.g., \cite[Figure 3]{aamari_estimating_2019}).
To sum-up,    
\begin{equation}\tag{$\mathscr{H}_{{\rm reach},r_{\min}}$}\label{Hyp_reach}
\text{$S$ is a $\mathscr{C}^{1,1}$ closed toroidal surface such that }\operatorname{Reach}(S)\geq r_{\min}>0.
\end{equation}
As it will be emphasized in what follows, the regularity assumption is actually a consequence of the reach constraint:  indeed, the class of sets satisfying a ``Reach'' constraint is closed in a sense to be specified later and all elements are of class $\mathscr{C}^{1,1}$.
 
Other constraints such as a bound on the two-dimensional Hausdorff measure $\mathscr{H}^2(S)$ of $S$ (in other words the perimeter of the stellarator in $\R^3$) will also be considered:
\begin{equation}\tag{$\mathscr{H}_{{\rm Perim},P_{\max}}$}\label{Hyp_perim}
\mathscr{H}^2(S)\leq P_{\max}.
\end{equation}
\end{itemize}

    To sum-up, let us introduce the admissible set of shapes we will deal with in what follows:
    $$
    \Oad=\{S=\partial V \subset D \mid \text{$P\subset V$\text{ and $S$ satisfies }\eqref{H1}, \ \eqref{Hyp_reach}, \ \eqref{Hyp_perim}}\}.
    $$
     Note that the reach condition has been imposed on the surface and not only on the volume.

The resulting shape optimization problem we will consider reads
\begin{equation}\label{SOP}\tag{$\mathscr{P}_{\textrm{shape}}$}
\boxed{\inf_{S\in \Oad}C(S).}
\end{equation}

In the two following sections, we investigate two important aspects of the shape optimization problem~\eqref{SOP}. The first one concerns the existence of optimal shapes and is investigated in Section~\ref{sec:analyseExist}. 
The second one is related to the derivation of first order optimality conditions, at the heart of the algorithms implemented in the last section of this article. To this aim, we apply in Section~\ref{sec:shapeD} the so-called Hadamard boundary variation method recalled in Section~\ref{sec:Had}. 

\section{Existence issues for Problem~\eqref{SOP}}\label{sec:analyseExist}

\subsection[Existence of an optimal current for a given shape]{Existence of an optimal current for a given shape (Solving of Problem~\eqref{defpb:PS})}\label{sec:optCurrent}
We first establish that the infimum defining \eqref{defpb:PS} is in fact a minimum. Moreover, the minimizer is unique. 
\begin{lemma}
    \label{lem:existence}
    Let $S\in \Oad$. The optimization problem~\eqref{defpb:PS} has a unique minimizer $j_S$.
Moreover, one has
    \begin{align}
        j_S &= (\lambda \operatorname{Id}+\operatorname{BS}_S^\dagger \operatorname{BS}_S)^{-1}\operatorname{BS}_S^\dagger B_T,\nonumber\\
        C(S)&= \lambda\Vert (\lambda \operatorname{Id}+\operatorname{BS}_S^\dagger \operatorname{BS}_S)^{-1}\operatorname{BS}_S^\dagger B_T\Vert^2_{\mathscr{F}_S}+ \Vert \operatorname{BS}_S (\lambda \operatorname{Id}+\operatorname{BS}_S^\dagger \operatorname{BS}_S)^{-1}\operatorname{BS}_S^\dagger B_T -B_T \Vert^2_{L^2(P,\R^3)}.\label{eq:C(S)}
    \end{align}
\end{lemma}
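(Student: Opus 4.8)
The plan is to recognize $(\mathscr{P}_S)$ as the minimization of a coercive, strictly convex quadratic functional on a Hilbert space and to solve it in closed form by completing the square. Fix $S\in\Oad$. By definition, $\mathscr{F}_S^0$ is a closed subspace of the Hilbert space $\mathscr{F}_S$, hence itself a Hilbert space; and, by the Remark following Definition~\ref{def:opBS}, the restriction $\mathcal B:=\operatorname{BS}_S|_{\mathscr{F}_S^0}\colon \mathscr{F}_S^0\to L^2(P,\R^3)$ is a bounded (indeed compact) linear operator, so it has a bounded adjoint $\mathcal B^\dagger\colon L^2(P,\R^3)\to\mathscr{F}_S^0$. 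Throughout, $\operatorname{BS}_S^\dagger$ appearing in the statement is to be read as this adjoint $\mathcal B^\dagger$, which in particular ensures that the candidate minimizer produced below automatically belongs to $\mathscr{F}_S^0$; one point deserving a line of care is precisely this bookkeeping around the divergence-free constraint, i.e.\ that one works on the closed subspace $\mathscr{F}_S^0$ rather than on all of $\mathscr{F}_S$.

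Next I would expand $J(j):=\|\mathcal B j-B_T\|^2_{L^2(P,\R^3)}+\lambda\|j\|^2_{\mathscr{F}_S}$. Using $\|\mathcal B j\|^2_{L^2(P,\R^3)}=\langle \mathcal B^\dagger\mathcal B j,j\rangle_{\mathscr{F}_S}$ one obtains
\[
J(j)=\big\langle (\lambda\operatorname{Id}+\mathcal B^\dagger\mathcal B)\,j,\,j\big\rangle_{\mathscr{F}_S}-2\,\langle \mathcal B^\dagger B_T,\,j\rangle_{\mathscr{F}_S}+\|B_T\|^2_{L^2(P,\R^3)}.
\]
Set $M:=\lambda\operatorname{Id}+\mathcal B^\dagger\mathcal B\in\mathcal L(\mathscr{F}_S^0)$. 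Since $\mathcal B^\dagger\mathcal B$ is self-adjoint and nonnegative and $\lambda>0$, the operator $M$ is self-adjoint and satisfies $\langle M j,j\rangle_{\mathscr{F}_S}\geq\lambda\|j\|^2_{\mathscr{F}_S}$ for all $j$; hence (Lax–Milgram) $M$ is invertible with bounded inverse. Completing the square,
\[
J(j)=\big\langle M\big(j-M^{-1}\mathcal B^\dagger B_T\big),\,j-M^{-1}\mathcal B^\dagger B_T\big\rangle_{\mathscr{F}_S}+\|B_T\|^2_{L^2(P,\R^3)}-\big\langle M^{-1}\mathcal B^\dagger B_T,\,\mathcal B^\dagger B_T\big\rangle_{\mathscr{F}_S},
\]
and, because $\langle M v,v\rangle_{\mathscr{F}_S}\geq\lambda\|v\|^2_{\mathscr{F}_S}$ vanishes only for $v=0$, this last expression is minimized if and only if $j=j_S:=M^{-1}\mathcal B^\dagger B_T=(\lambda\operatorname{Id}+\operatorname{BS}_S^\dagger\operatorname{BS}_S)^{-1}\operatorname{BS}_S^\dagger B_T$. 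This yields at once existence of a minimizer, its uniqueness, and the announced formula for $j_S$; equivalently, $j_S$ is characterized by the Euler–Lagrange identity $(\lambda\operatorname{Id}+\operatorname{BS}_S^\dagger\operatorname{BS}_S)j_S=\operatorname{BS}_S^\dagger B_T$.

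Finally, the formula for $C(S)$ is obtained simply by writing $C(S)=J(j_S)=\lambda\|j_S\|^2_{\mathscr{F}_S}+\|\operatorname{BS}_S j_S-B_T\|^2_{L^2(P,\R^3)}$ and substituting the expression of $j_S$; this is a direct rewriting requiring no further argument. I do not expect any genuine obstacle here: the statement is an instance of standard Tikhonov/Lax–Milgram theory in a Hilbert space, and the only subtlety — ensuring the adjoint is taken inside $\mathscr{F}_S^0$ so that the optimal current is admissible — is handled by the setup in the first step.
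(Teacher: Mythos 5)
Your proof is correct, and it reaches the same normal equation $(\lambda\operatorname{Id}+\operatorname{BS}_S^\dagger\operatorname{BS}_S)j_S=\operatorname{BS}_S^\dagger B_T$ as the paper, but by a somewhat different route. The paper first establishes existence and uniqueness abstractly (strong convexity plus continuity of the functional on the Hilbert space $\mathscr{F}_S^0$, i.e.\ the direct method), then writes the first-order optimality condition \eqref{cionOptimjS}, and finally inverts $\lambda\operatorname{Id}+\operatorname{BS}_S^\dagger\operatorname{BS}_S$ by appealing to the resolvent of the compact, symmetric, nonnegative operator $\operatorname{BS}_S^\dagger\operatorname{BS}_S$. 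You instead complete the square, which exhibits the minimizer explicitly and delivers existence, uniqueness, and the closed form in a single stroke; invertibility of $M=\lambda\operatorname{Id}+\operatorname{BS}_S^\dagger\operatorname{BS}_S$ is obtained from coercivity (Lax--Milgram) rather than from compactness, which is marginally more elementary and would work even without the compactness of $\operatorname{BS}_S$. Your explicit remark that $\operatorname{BS}_S^\dagger$ must be read as the adjoint of the restriction to $\mathscr{F}_S^0$ (so that $j_S$ is automatically admissible) is exactly the bookkeeping the paper performs implicitly when it says the adjoint is ``well defined on $\mathscr{F}_S^0$''. Both arguments are sound; yours is self-contained where the paper's existence step leans on a compressed convexity/semicontinuity argument.
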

\begin{proof}
   First observe  that $\mathscr{F}_S^0$ is a Hilbert space and that the mapping  $\mathscr{F}_S^0 \ni j\mapsto \|\operatorname{BS}_S j -B_T \|^2_{L^2(P,\R^3)} + \lambda \|j\|^2_{\mathscr{F}_S}$ is strongly convex, since it is the sum of the convex functional $
    j\mapsto \|\operatorname{BS}_S j -B_T \|^2_{L^2(P,\R^3)}$ and the strongly convex one $j\mapsto
    \lambda \|j\|^2_{\mathscr{F}_S}$. Furthermore, we claim that the functional $\mathscr{F}_S^0 \ni j\mapsto \|\operatorname{BS}_S j -B \|^2_{L^2(P,\R^3)} + \lambda \|j\|^2_{\mathscr{F}_S}$ is lower semicontinuous for the strong topology of $\mathscr{F}^0_S$. Indeed, let $(j_n)_{n\in \N}$ denote a sequence of $\mathscr{F}_S^0$ converging to $j\in \mathscr{F}_S^0$. According to \eqref{H1}, by using the dominated convergence theorem and since $K$ is uniformly bounded in $S\times P$, one has
    $$
    \lim_{n\to +\infty} \int_S K(x,y)\times j_n(x) d\mu_S(x)=\int_S K(x,y)\times j(x) d\mu_S(x).
    $$
    It follows that the functional to minimize is lower semicontinuous (and even continuous) in $\mathscr{F}_S^0$, whence the existence of a unique minimizer $j_S$ for Problem~\eqref{defpb:PS}.

    Since $\operatorname{BS}_S$ is continuous, its adjoint $\operatorname{BS}_S^\dag$ is well defined on $\mathscr{F}_S^0$. It is hence standard that the first order optimality condition for this problem reads
        \begin{equation}\label{cionOptimjS}
        \forall v\in \mathscr{F}_S^0,\qquad \langle \operatorname{BS}_S v , \operatorname{BS}_S j_S-B_T \rangle_{L^2(P,\R^3)}+\lambda \langle v , j_S \rangle_{\mathscr{F}_S^0} = 0
        \end{equation}
    which also rewrites 
    $$
    \forall v\in \mathscr{F}_S^0,\qquad \langle v , \lambda j_S+ \operatorname{BS}_S^\dagger(\operatorname{BS}_S j_S-B_T) \rangle_{\mathscr{F}_S^0} = 0.
    $$
    Since $v$ is arbitrary in $\mathscr{F}_S^0$, we thus infer that $ \lambda j_S+ \operatorname{BS}_S^\dagger \operatorname{BS}_S j_S =\operatorname{BS}_S^\dagger B_T$.
    The operator $\operatorname{BS}_S^\dagger \operatorname{BS}_S$ is compact and symmetric. Besides its spectrum is positive and we can therefore consider its resolvent for negative real numbers $-\lambda$ with $\lambda >0$, so that
$$j_S=  (\lambda \operatorname{Id}+\operatorname{BS}_S^\dagger \operatorname{BS}_S)^{-1} \operatorname{BS}_S^\dagger B_T.$$
The  expression of $C(S)$ given in \eqref{eq:C(S)} follows  from a straightforward computation.
\end{proof}

\begin{remark}
    \label{rmk:LS_simplifications}
    When confronted with the numerical implementation of 
   the shape optimization, 
motivated by     the structure of  $\mathscr{F}_S^0$ and the properties of the {\it in vacuo} Maxwell-equations,  
    we will find
    it useful to:
    \begin{itemize}
        \item optimize on a closed affine subset $j^a_S+\hat{\mathscr{F}_S^0}\subset \mathscr{F}_S^0$ instead of the entire set $\mathscr{F}_S^0$. We refer to Section \ref{subsec:current-sheet_representation} for further details;
        \item replace the target 
        magnetic field in $L^2(P,\R^3)$ by its normal component 
        on the plasma surface (thus, by an object  in $L^2(\partial P, \R)$).
        Indeed, a divergence-free vector field on a 3D domain (in absence of electric currents in the plasma) is nearly entirely characterized by its normal component on the boundary. 
        Further details are given in Section~\ref{subsec:Magnetic_field_representation} and Appendix~\ref{subsec:poisson_torus}.
    \end{itemize}
    Nevertheless, such changes have a minor impact on the theoretical discussion on the shape optimization process and we believe that, for the sake of clarity, it is better to postpone the details 
    about such 
 modifications to Section~\ref{sec:num}.
\end{remark}

\subsection{Existence of an optimal shape}\label{sec:existence_SOP}\label{sec:existSOP}

\begin{theorem}
    \label{th:existence_SOP}
    The shape optimization problem \eqref{SOP} has at least one solution.
\end{theorem}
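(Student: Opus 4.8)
The strategy is the direct method of the calculus of variations: take a minimizing sequence $(S_n)=(\partial V_n)$ in $\Oad$, extract a subsequence converging (in a suitable sense) to some limit set $S_\star=\partial V_\star$, show $S_\star\in\Oad$, and verify lower semicontinuity of $C$ along the sequence, i.e. $C(S_\star)\le\liminf_n C(S_n)$. The key is to choose the right notion of convergence. Since all $S_n$ satisfy the reach constraint $\operatorname{Reach}(S_n)\ge r_{\min}$ and lie in the fixed compact $D$, one should invoke a compactness theorem for the class of sets with uniformly bounded reach: such sets have uniformly $\mathscr{C}^{1,1}$ graphs in local charts, so one gets convergence of the $V_n$ in the Hausdorff complementary topology (equivalently, $b_{V_n}\to b_{V_\star}$ locally uniformly), together with $\mathscr{C}^1$ convergence of the boundaries and local uniform convergence of the normal fields. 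This is the analogue, for the reach constraint, of the classical compactness of uniformly Lipschitz or uniformly $\mathscr{C}^{1,1}$ domains; I would cite the relevant results (Federer, Delfour–Zolésio, and the quantitative statements in \cite{aamari_estimating_2019}) rather than reprove them.

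**Passing the constraints to the limit.** Once convergence is established I must check $S_\star\in\Oad$. The inclusion $S_\star\subset D$ and $P\subset V_\star$ survive Hausdorff convergence. The reach bound $\operatorname{Reach}(S_\star)\ge r_{\min}$ is exactly the closedness statement flagged in the excerpt (``the class of sets satisfying a Reach constraint is closed''), which also forces $S_\star$ to be a $\mathscr{C}^{1,1}$ toroidal surface; in particular the topology is preserved because $\mathscr{C}^1$-close embedded surfaces are diffeomorphic. The distance constraint \eqref{H1}, $d(S_\star,P)\ge\delta$, passes to the limit by uniform convergence of the signed distance functions. The perimeter constraint \eqref{Hyp_perim}, $\mathscr{H}^2(S_\star)\le P_{\max}$, follows from lower semicontinuity of the $(n-1)$-Hausdorff measure (perimeter) under this convergence — again standard for this class.

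**Lower semicontinuity of $C$.** This is the main obstacle and the novel part. Using the formula \eqref{eq:C(S)} from Lemma~\ref{lem:existence}, $C(S)$ is built from the operator $\operatorname{BS}_S:\mathscr{F}_S^0\to L^2(P,\R^3)$, but the difficulty is that the domains $\mathscr{F}_S^0$ (and their underlying measures $\mu_{S_n}$) vary with $n$, so one cannot directly speak of operator convergence. My plan is: (i) pull everything back to a fixed model torus $T$ via the $\mathscr{C}^{1,1}$ charts, so that $\operatorname{BS}_{S_n}$ becomes a family of integral operators on a fixed space with kernels $K_n(x,y)\times(\cdot)$ that converge pointwise and are uniformly bounded (thanks to \eqref{H1} and the uniform geometry), whence $\operatorname{BS}_{S_n}\to\operatorname{BS}_{S_\star}$ in operator norm by dominated convergence, and likewise for the adjoints; (ii) note the divergence-free constraint defining $\mathscr{F}_{S_n}^0$ also transfers, with the constraint operators converging; (iii) deduce convergence of $(\lambda\operatorname{Id}+\operatorname{BS}_{S_n}^\dagger\operatorname{BS}_{S_n})^{-1}$ by the standard resolvent perturbation estimate, uniform in $n$ because $\lambda>0$ stays away from the (nonnegative) spectrum; (iv) conclude $C(S_n)\to C(S_\star)$ — in fact full continuity, not merely semicontinuity, along the extracted subsequence. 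Combined with the constraint analysis above, $S_\star$ is admissible and realizes the infimum. The technical heart, as the authors warn, is step (i)–(iv): making the ``variable domain'' pullback rigorous and controlling the $\mathscr{F}_S^0$ structure uniformly, which is where the reach hypothesis does the real work by providing uniform $\mathscr{C}^{1,1}$ parametrizations with uniformly controlled metrics.
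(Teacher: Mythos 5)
Your overall skeleton (direct method, compactness from the uniform reach bound, passage of the constraints to the limit) coincides with the paper's, and your treatment of the admissibility of the limit shape is exactly what Lemma~\ref{lem:shape_compactness} provides. Where you genuinely diverge is the semicontinuity of $C$. The paper never pulls the problem back to a fixed torus: it keeps the optimal currents $j_n$ on the moving surfaces $S_n$, extends them to a tubular neighborhood by composition with a projection along the flow of $\nabla b_{V_\infty}$, converts surface integrals into volume integrals over $A_\kappa(S_n)$ with uniform Jacobian bounds (Lemma~\ref{lem:det_bounded}), extracts a \emph{weak} $L^2(S_\infty,\R^3)$ limit $j_\infty$ of the transported currents, and shows via the comparison Lemma~\ref{lem:step-co} that $\operatorname{BS}_{S_n}j_n-\operatorname{BS}_{S_\infty}\tilde{\jmath}_n\to0$; since $j_\infty$ is then checked to be an admissible competitor in $\mathscr{F}_{S_\infty}^0$, only the inequality $C(S_\infty)\le\liminf_n C(S_n)$ is obtained --- and that is all that is needed. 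Your route instead aims at operator-norm convergence of the pulled-back Biot--Savart operators and of the resolvents on a fixed space, which would yield full continuity of $C$ along the subsequence, a stronger conclusion. This is viable, but its technical heart is precisely what you postpone: (a) you must construct \emph{global} parametrizations $\psi_n:T\to S_n$ with $\psi_n\to\psi_\infty$ in $\mathscr{C}^1$ (for instance by writing $S_n$ as a normal graph over $S_\infty$, whose well-definedness is essentially the paper's argument that $T_n$ is a diffeomorphism), since local charts do not suffice to transport the globally defined spaces $\mathscr{F}_{S_n}^0$; and (b) the transfer of the divergence-free constraint is best handled by the Jacobian-normalized pushforward $\Psi$ of Section~\ref{subsec:current-sheet_representation} (cf.\ Lemma~\ref{lem:div_preserve}), which maps the fixed, metric-independent space $\mathscr{F}_T^0$ onto $\mathscr{F}_{S_n}^0$; your phrase about ``constraint operators converging'' glosses over this point. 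Granting (a) and (b), your steps on resolvent convergence are routine since $\lambda>0$ keeps the operators uniformly coercive; the paper's approach buys the avoidance of (a) altogether, at the price of settling for weak convergence of the currents and a liminf inequality rather than continuity.
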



The proof follows the direct method of the calculus of variation.
Most of the compactness on our set of admissible shapes comes from the bounded reach assumption \eqref{Hyp_reach}.
In particular, the following Lipschitz estimate is crucial.
\begin{lemma}[Theorem 2.8 of \cite{dalphin_uniform_2018}]
    \label{lem:Lipschitz}
    Let $V\subset{\R^n}$ be a nonempty set such that $\operatorname{Reach}(\partial V)\geq \rmin$ and $\mathscr{H}^{n}(\partial V)=0$. Then, for every $h\in (0,\rmin)$, the gradient 
    $\nabla b_V$
    of the signed distance function  is $\frac{2}{\rmin-h}$-Lipschitz on the tubular neighborhood $U_h(\partial V)$.
    
\end{lemma}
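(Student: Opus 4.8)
The plan is to reduce the statement to a local computation near the boundary using the structure theory of sets of positive reach, and then to estimate the differential of the signed distance directly. First I would recall the basic facts from Federer's theory: on the tubular neighborhood $U_{\rmin}(\partial V)\setminus\partial V$ the signed distance $b_V$ is differentiable, and for a point $x$ in this neighborhood the nearest-point projection $\pi(x)$ onto $\partial V$ is well-defined and single-valued, with $\nabla b_V(x)=\frac{x-\pi(x)}{b_V(x)}$ when $b_V(x)>0$ (and the analogous formula with the sign flipped inside $V$). The unit vector $\nabla b_V(x)$ is exactly the outward normal to $\partial V$ at $\pi(x)$, so $\nabla b_V$ is constant along each normal segment. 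Hence controlling the Lipschitz constant of $\nabla b_V$ on $U_h(\partial V)$ amounts to controlling how fast the normal direction can turn as the base point $\pi(x)$ moves on $\partial V$, combined with how the normal segments foliate the neighborhood.

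The key quantitative input is the reach bound: $\operatorname{Reach}(\partial V)\ge\rmin$ means that the map $(\mathrm{pt},\text{signed offset})\mapsto \mathrm{pt}+(\text{offset})\nu$ is a bi-Lipschitz diffeomorphism from $\partial V\times(-\rmin,\rmin)$ onto $U_{\rmin}(\partial V)$, and that the second fundamental form of $\partial V$ (defined $\mathscr{H}^{n-1}$-a.e., since $\partial V$ is $\mathscr{C}^{1,1}$) has principal curvatures bounded in absolute value by $1/\rmin$. The differential $D(\nabla b_V)(x)$, restricted to directions tangent to the level set $\{b_V=b_V(x)\}$, is the shape operator (Weingarten map) of that level set; for a level set at signed distance $t=b_V(x)$ with $|t|\le h$, the principal curvatures are $\kappa_i/(1+t\kappa_i)$ where $\kappa_i$ are the principal curvatures of $\partial V$ at $\pi(x)$. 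Using $|\kappa_i|\le 1/\rmin$ and $|t|\le h<\rmin$ gives $|1+t\kappa_i|\ge 1-h/\rmin=(\rmin-h)/\rmin$, hence each $|\kappa_i/(1+t\kappa_i)|\le (1/\rmin)/((\rmin-h)/\rmin)=1/(\rmin-h)$. In the normal direction $D(\nabla b_V)(x)$ vanishes since $\nabla b_V$ is constant along normals and $|\nabla b_V|\equiv 1$. Combining the tangential and normal contributions and accounting for the factor-$2$ slack coming from the non-orthogonality / the full operator norm estimate (or, alternatively, from integrating along a path and using the two-sided bound), one gets $\|D(\nabla b_V)(x)\|\le \tfrac{2}{\rmin-h}$ for a.e.\ $x\in U_h(\partial V)$, which by the mean value inequality yields the claimed $\tfrac{2}{\rmin-h}$-Lipschitz bound; the hypothesis $\mathscr{H}^n(\partial V)=0$ ensures $\partial V$ is negligible so the a.e.\ bound on the connected tubular neighborhood propagates to a genuine Lipschitz estimate across $\partial V$.

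Since this lemma is quoted verbatim from Theorem 2.8 of \cite{dalphin_uniform_2018}, the honest "proof" here is a citation; the sketch above is the argument underlying it, and the step I expect to be the real obstacle — and the reason one cites rather than reproves — is the careful justification that the pointwise second-order information (second fundamental form, tube coordinates, the formula for curvatures of parallel hypersurfaces) is valid in the merely $\mathscr{C}^{1,1}$ / positive-reach setting rather than the smooth setting, i.e.\ making the "where they can be defined" caveat rigorous and checking that the a.e.\ bounds assemble into an everywhere Lipschitz bound. For the purposes of this paper I would simply invoke \cite[Theorem 2.8]{dalphin_uniform_2018} and move on.
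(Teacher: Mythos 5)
The paper itself gives no proof of this lemma: it is quoted verbatim as Theorem~2.8 of \cite{dalphin_uniform_2018}, and your proposal correctly ends by doing exactly the same thing, so the approaches coincide. Your supplementary sketch (shape operator of the parallel level sets, principal curvatures $\kappa_i/(1+t\kappa_i)$ bounded via $|\kappa_i|\le 1/\rmin$) is a reasonable account of the mechanism behind the cited result, even though the passage from the tangential bound $1/(\rmin-h)$ to the stated constant $2/(\rmin-h)$ is left vague; since the citation carries the burden of proof here, that looseness is immaterial.
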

With this estimate, we can state the following compactness result. 
\begin{lemma} \label{lem:shape_compactness}
    Let $r$ be in $(0,r_{\min})$ and denote by $(S_n)_{n\in \N}=(\partial V_n)_{n\in \N}$ a sequence in $\Oad$.
Then, there exists $S_\infty=\partial V_\infty \in \Oad$ such that, up to a subsequence,
    \begin{itemize}
        \item $b_{V_\infty}$ is in $\mathscr{C}^{1,1}(\overline{U_{r}(S_\infty)})$ and $(b_{V_n})_{n\in \N}$ converges to $b_{V_\infty}$ in $\mathscr{C}^{1}(\overline{U_{r}(S_\infty)})$;
        \item $(b_{V_n})_{n\in\N}$ converges to $b_{V_\infty}$ in $\mathscr{C}(\overline{D})$;
        \item $(d_{S_n})_{n\in\N}$ converges to $d_{S_\infty}$ in $\mathscr{C}(\overline{D})$;
        \item $(\mathscr{H}^{2}(S_n))_{n\in \N}$ converges to $\mathscr{H}^{2}(S_\infty)$.
    \end{itemize}
\end{lemma}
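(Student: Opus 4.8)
The plan is to apply the direct method to the signed distance functions, exploiting the uniform Lipschitz control on their gradients provided by Lemma~\ref{lem:Lipschitz}. Fix $r\in(0,\rmin)$ and pick $r'$ with $r<r'<\rmin$, and set $K_0=\overline{U_{\rmin}(D)}$, a compact set containing $\overline D$ and every tube $\overline{U_{r}(S)}$ with $S\subset D$. Each $b_{V_n}$ is $1$-Lipschitz and, since $S_n\subset D$, satisfies $|b_{V_n}|=d_{S_n}\le \operatorname{diam}(K_0)$ on $K_0$; by Arzel\`a--Ascoli I would extract a subsequence (not relabeled) with $b_{V_n}\to b_\infty$ uniformly on $K_0$, where $b_\infty$ is $1$-Lipschitz. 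Since the unsigned distances $d_{S_n}=|b_{V_n}|$ then also converge uniformly, and the uniform limit of distance functions to the nonempty compact sets $S_n\subset D$ is the distance function to their Hausdorff limit (Blaschke selection), one gets a nonempty compact $S_\infty\subset D$ with $d_{S_n}\to d_{S_\infty}=|b_\infty|$ uniformly on $K_0$; in particular $S_\infty=\{b_\infty=0\}$. By the closedness of signed distance functions under uniform convergence (e.g.\ \cite[Chap.~7]{delfour_shapes_2011}), $b_\infty=b_{V_\infty}$ with $V_\infty=\{b_\infty\le 0\}$ and $S_\infty=\partial V_\infty$. This already yields the second and third bullets.

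Next I would upgrade to $\mathscr{C}^1$ convergence on the tube. Because $d_{S_n}\to d_{S_\infty}$ uniformly, for $n$ large $\overline{U_r(S_\infty)}\subset U_{r'}(S_n)$; since $S_n=\partial V_n$ is a $\mathscr{C}^{1,1}$ surface, $\mathscr{H}^3(S_n)=0$, so Lemma~\ref{lem:Lipschitz} applies and $\nabla b_{V_n}$ is $\tfrac{2}{\rmin-r'}$-Lipschitz on $U_{r'}(S_n)\supset\overline{U_r(S_\infty)}$. The gradients $\nabla b_{V_n}$ are thus uniformly bounded (by $1$) and equi-Lipschitz on $\overline{U_r(S_\infty)}$; Arzel\`a--Ascoli gives uniform convergence of every sub-subsequence, and any limit must equal $\nabla b_\infty$ (the distributional gradient of the $\mathscr{C}^0$ limit), so no further extraction is needed and $\nabla b_{V_n}\to \nabla b_\infty$ uniformly on $\overline{U_r(S_\infty)}$, for each $r<\rmin$. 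Hence $b_\infty\in\mathscr{C}^1(\overline{U_r(S_\infty)})$ and $\nabla b_\infty$ inherits the constant $\tfrac{2}{\rmin-r'}$, so $b_\infty\in\mathscr{C}^{1,1}(\overline{U_r(S_\infty)})$: this is the first bullet. Passing $|\nabla b_{V_n}|\equiv 1$ to the limit gives $|\nabla b_\infty|\equiv1$ on the tube, so $d_{S_\infty}=|b_\infty|$ is $\mathscr{C}^1$ on $U_r(S_\infty)\setminus S_\infty$; since this holds for every $r<\rmin$, $d_{S_\infty}$ is differentiable on $U_h(S_\infty)\setminus S_\infty$ for all $h<\rmin$, i.e.\ $\operatorname{Reach}(S_\infty)\ge\rmin$, which is \eqref{Hyp_reach}.

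To conclude $S_\infty\in\Oad$ it remains to check the other constraints and the topology. For $x\in P$ one has $x\in\operatorname{int}V_n$ and $d_{S_n}(x)\ge\delta$ by \eqref{H1}, hence $b_{V_n}(x)=-d_{S_n}(x)\le-\delta$; in the limit $b_\infty\le-\delta$ on $P$, so $P\subset\operatorname{int}V_\infty$ and \eqref{H1} holds for $S_\infty$. The $\mathscr{C}^1$ convergence realizes each $S_n$, for $n$ large, as a normal graph over $S_\infty$: the map $(p,s)\mapsto b_{V_n}(p+s\nabla b_\infty(p))$ has $s$-derivative close to $1$, so its zero set is a $\mathscr{C}^1$ graph that is $\mathscr{C}^1$-close to $S_\infty$; in particular $S_n$ and $S_\infty$ are diffeomorphic, so $S_\infty$ is a $\mathscr{C}^{1,1}$ toroidal surface. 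For the area I would squeeze $\mathscr{H}^2$ between the two nearest-point projections: both $S_n$ and $S_\infty$ have reach $\ge\rmin$, and with $t_n=d_{\mathrm H}(S_n,S_\infty)\to0$ the projections $\pi_{S_\infty}\colon S_n\to S_\infty$ and $\pi_{S_n}\colon S_\infty\to S_n$ are, by Federer's theory of sets of positive reach \cite{federer}, bijections (for $n$ large, by the normal-graph structure) with Lipschitz constants $\le \rmin/(\rmin-t_n)\to1$. The area formula then yields $\mathscr{H}^2(S_\infty)\le(\rmin/(\rmin-t_n))^2\,\mathscr{H}^2(S_n)$ and symmetrically, whence $\mathscr{H}^2(S_n)\to\mathscr{H}^2(S_\infty)$, the fourth bullet; together with \eqref{Hyp_perim} for each $S_n$ this gives $\mathscr{H}^2(S_\infty)\le P_{\max}$. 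Thus $S_\infty\in\Oad$.

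The main obstacle is the passage from $\mathscr{C}^0$ to $\mathscr{C}^{1,1}$ regularity of the limiting signed distance on a \emph{fixed} tube together with the preservation of the reach bound: this is exactly where Lemma~\ref{lem:Lipschitz} is indispensable, as it is what makes the family $\{\nabla b_{V_n}\}$ equicontinuous on a neighborhood independent of $n$. A secondary difficulty is the continuity of the area $\mathscr{H}^2(S_n)\to\mathscr{H}^2(S_\infty)$: mere $\mathscr{C}^1$ convergence of level sets would only give lower semicontinuity, and the matching upper bound genuinely uses the uniform reach, either through the projection-squeeze above or, equivalently, through a normal-graph change of variables in the area formula, both requiring care because the surfaces are only $\mathscr{C}^{1,1}$ and not $\mathscr{C}^2$.
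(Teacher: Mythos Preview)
Your proposal is correct and follows the same underlying strategy as the paper: both hinge on the uniform reach bound, invoking Lemma~\ref{lem:Lipschitz} to get equi-Lipschitz control on $\nabla b_{V_n}$ in a fixed tube and hence $\mathscr{C}^1$ compactness of the signed distances. The paper's proof is terse and delegates almost everything to the literature (compactness and $\mathscr{C}^{1,\alpha}$/weak-$W^{2,\infty}$ convergence to \cite[Chap.~6]{delfour_shapes_2011} and \cite{dalphin_uniform_2018}, continuity of $\mathscr{H}^2$ to \cite{dalphin_uniform_2018,guo_convergence_2013}); you instead give self-contained arguments---Arzel\`a--Ascoli for the signed distances and their gradients, an explicit normal-graph/implicit-function step to recover the toroidal topology (which the paper leaves implicit in the citations), and a projection-squeeze via Federer's Lipschitz bound $\rmin/(\rmin-t_n)$ for the area---which buys you independence from those references at the cost of a longer write-up.
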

\begin{proof}
Compactness properties among  Hausdorff 
  distances 
from sets of uniformly positive reach  are well known and remain valid for 
the signed distance (see, e.g., \cite[Chapter~6]{delfour_shapes_2011}).
    Besides, as stated in \cite{dalphin_uniform_2018}, the convergence property holds true for the strong topology of $\mathscr{C}^{1,\alpha}$ (for $\alpha<1$) and for the weak topology of $W^{2,\infty}$ in a tubular neighborhood of $S_\infty$.
    As a consequence,  $d(S_\infty,P)\geq \delta$ and $\operatorname{Reach}(\partial V_\infty)\geq \rmin$.
    In particular, thanks to Lemma~\ref{lem:Lipschitz}, $b_{V_\infty}$ is $\mathscr{C}^{1,1}$ on $\overline{U_{r}(S_\infty)}$.
Finally,  the convergence of $\mathscr{H}^{2}(S_n)$ to $\mathscr{H}^{2}(S_\infty)$ follows from standard results on   the continuity of $S\mapsto \mathscr{H}^2(S)$ (see \cite{dalphin_uniform_2018} or \cite{guo_convergence_2013}).
%
\end{proof}

The end of this section is devoted to the proof of Theorem~\ref{th:existence_SOP}.
    Let $(S_n)_{n\in \N}=(\partial V_n)_{n\in\N}$ be a minimizing sequence for Problem~\eqref{SOP}. Denote by $S_\infty$ a closure point of this sequence in the sense of Lemma~\ref{lem:shape_compactness}.
 In what follows, we will still denote by $(S_n)_{n\in \N}$ the converging subsequence introduced in Lemma~\ref{lem:shape_compactness}.
 
 We will proceed by showing a semicontinuity property of the criterion, namely that 
 $$
 \liminf_{n\to +\infty} C(S_n)\geq C(S_\infty).
 $$

Let $j_n$ denote the minimizer of Problem~\eqref{defpb:PS} for the surface $S_n$, whose existence is provided by Lemma~\ref{lem:existence}.

The idea is to consider a volume integral as approximation of the surface integral in the same spirit as in \cite{delfour_tangential_2000}. For this purpose, we need to extend locally $j_n$ to a volume around $S_n$. 
%
Notice that, 
without loss of generality, $S_n$ is contained in $U_{\rmin}(S_\infty)$ for every $n$, which implies, in particular, that $\nabla b_{V_{\infty}}$ is everywhere defined and Lipschitz continuous on $S_n$. 
Let $h>0$ be a small constant to be fixed later and 
define the map 
\begin{equation*}
T_n:
\begin{array}[t]{rcl}       
(-h,h) \times S_n &\to &A_h(S_n) \subset U_h(S_n) \\
    (t,x)  &\mapsto &x+t \nabla b_{V_{\infty}}(x),
\end{array}
\end{equation*}
where $A_h(S_n)$ denote the image of $T_n$.
Notice that $T_n$ is a bijection between $(-h,h) \times S_n$ and $A_h(S_n)$ if the latter is contained in $U_{\rmin}(S_\infty)$ (see Figure~\ref{fig:reach}).

    The differential of $T_n$ at $(t_0,x_0)\in(- h, h)\times S_n$ reads
$$
DT_n(t_0,x_0): \begin{array}[t]{rcl}
  \R\times T_{x_0}S_n &\to &\R^3\\
            (s,y) &\mapsto & s \nabla b_{V_\infty}(x_0)+ y+ t_0 \nabla_{S_n} (\nabla b_{V_\infty})(x_0) y,
    \end{array}
$$
where $\nabla_{S_n} (\nabla b_{V_\infty})(x_0)$ is a $3\times 3$ matrix, according to the notation introduced in Section~\ref{sec:notations}, and $T_{x_0}S_n$ is identified with a linear subspace of $\R^3$. 
We can identify $DT_n(t_0,x_0)$ with a $3\times 3$ matrix by choosing an orthogonal basis on $T_{x_0}S_n$ and its determinant, denoted $|DT_n(t_0,x_0)|$ in what follows, is independent of such a choice. 

\begin{figure}[!htb]
    \centering
    \begin{minipage}{.45\textwidth}
      \centering
      \includegraphics[height=1\linewidth]{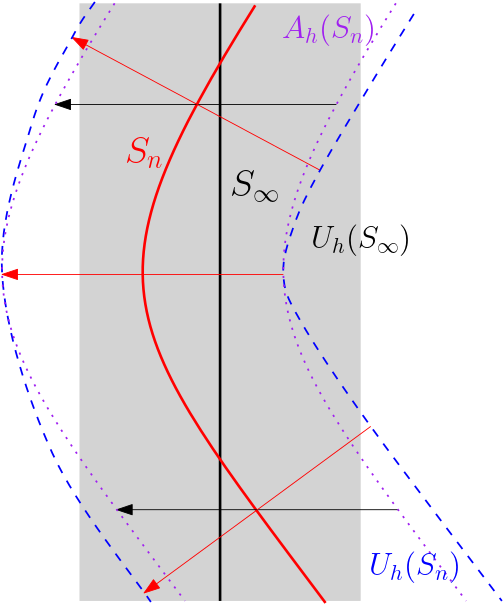}
      \captionof{figure}{This figure illustrates the difference between $U_h(S_\infty)$ filled in grey, $U_h(S_n)$ (resp., $A_h(S_n)$) delimitated by the blue dashed (resp., purple dotted) curves. The black arrows represent the field $\nabla b_{V_\infty}$ and the red ones represent $\nabla b_{V_n}$. Note that both $V_n$ and $V_\infty$ are on the right of the figure.}
      \label{fig:reach}
    \end{minipage}%
    \hfill
    \begin{minipage}{.45\textwidth}
      \centering
      \includegraphics[height=1\linewidth]{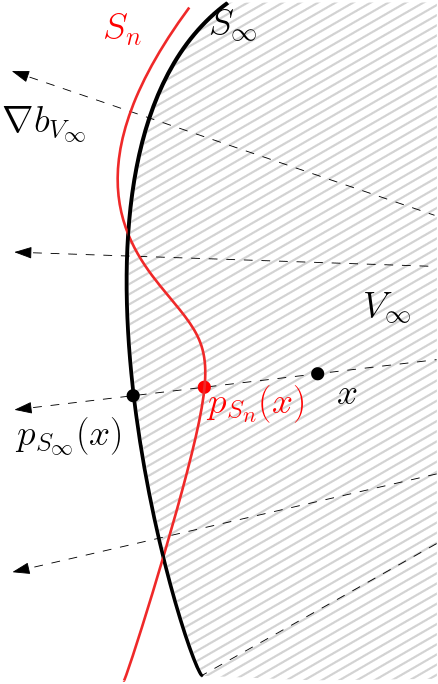}
      \captionof{figure}{$p_{S_n}(x)$ is obtained by taking the intersection of the flow of $\nabla b_{V_\infty}$ and $S_n$. Whereas the standard projector (in the sense of shortest distance) on $S_n$ is obtained by using the flow of $\nabla b_{V_n}$.}
      \label{fig:proj}
    \end{minipage}
    \end{figure}

Using the regularity of $\nabla b_{V_\infty}$ near the surface $S_\infty$, one can prove the following crucial estimate.
\begin{lemma}\label{lem:det_bounded}
    For every $\eps>0$, there exists $h=h_\eps>0 
    $ such that 
\begin{align*}
1-\eps \leq |DT_n(y)|\leq 1+\eps,\quad \text{for a.e. }(t_0,x_0)\in 
(-h,h)
\times S_n
\end{align*}
for every $n \in \N \cup \{ \infty \}$ large enough.
\end{lemma}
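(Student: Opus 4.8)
The plan is to expand the $3\times 3$ determinant $|DT_n(t_0,x_0)|$ explicitly using the block structure of $DT_n(t_0,x_0)$ and to bound the terms involving $t_0$ uniformly in $n$. First I would fix, for each $x_0\in S_n$, an orthonormal basis $(e_1,e_2)$ of $T_{x_0}S_n$ and complete it with $\nu_\infty(x_0):=\nabla b_{V_\infty}(x_0)$, which is a unit vector everywhere on $U_{\rmin}(S_\infty)$ (it is the gradient of a signed distance function, hence of unit norm on its domain of differentiability, by the very definition of reach and Lemma~\ref{lem:Lipschitz}). Relative to this basis, $DT_n(t_0,x_0)$ sends $(s,y)\mapsto s\,\nu_\infty(x_0)+y+t_0\,M(x_0)y$, where $M(x_0):=\nabla_{S_n}(\nabla b_{V_\infty})(x_0)$. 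The key algebraic observation is that the first column is exactly $\nu_\infty(x_0)$, so expanding the determinant along that column gives
\[
|DT_n(t_0,x_0)|=\bigl|\det\!\bigl(\Pi\circ(\mathrm{Id}+t_0 M(x_0))|_{T_{x_0}S_n}\bigr)\bigr|,
\]
where $\Pi$ is the orthogonal projection onto $T_{x_0}S_n$; this is the Jacobian of the restriction of $\mathrm{Id}+t_0M$ to the tangent plane, composed with the projection killing the normal component. At $t_0=0$ this determinant equals $1$, so the whole point is a continuity/smallness estimate in $t_0$, with constants uniform in $n$ and in $x_0\in S_n$.

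The uniformity is where the bounded-reach hypothesis does all the work, and I expect this to be the main obstacle. By Lemma~\ref{lem:shape_compactness}, $b_{V_\infty}\in\mathscr{C}^{1,1}(\overline{U_r(S_\infty)})$ for any $r<r_{\min}$, so $\nabla b_{V_\infty}$ is Lipschitz there with a constant $L$ depending only on $r_{\min}$ and $r$ (explicitly $\tfrac{2}{r_{\min}-r}$ by Lemma~\ref{lem:Lipschitz}); consequently $\|M(x_0)\|=\|\nabla_{S_n}(\nabla b_{V_\infty})(x_0)\|\le L$ for a.e.\ $x_0$, uniformly over all $n$, provided we have arranged (as the paper does, ``without loss of generality'') that $S_n\subset U_{\rmin}(S_\infty)$ and indeed $S_n\subset \overline{U_r(S_\infty)}$ for a fixed $r<r_{\min}$. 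Then for $|t_0|<h$ the map $\mathrm{Id}+t_0M(x_0)$ is within $hL$ of the identity in operator norm, hence its restriction-and-project to the tangent plane has singular values in $[1-hL,1+hL]$, and its determinant lies in $[(1-hL)^2,(1+hL)^2]$. Choosing $h=h_\varepsilon$ small enough that $(1+hL)^2\le 1+\varepsilon$ and $(1-hL)^2\ge 1-\varepsilon$ (e.g.\ $h\le \varepsilon/(4L)$) finishes the estimate for all $n$ simultaneously, and the case $n=\infty$ is identical since $M$ is the same matrix field evaluated on $S_\infty$.

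One technical point to address carefully: the determinant identity above requires knowing that $\nu_\infty(x_0)$ is genuinely transverse to $T_{x_0}S_n$ — otherwise the ``expand along the first column'' step loses information. This is true once $h$ (controlling how far $S_n$ sits from $S_\infty$) is small: by the $\mathscr{C}^1$ convergence $b_{V_n}\to b_{V_\infty}$ of Lemma~\ref{lem:shape_compactness}, the normal $\nu_n(x_0)=\nabla b_{V_n}(x_0)$ to $S_n$ is close to $\nu_\infty(x_0)$, so $\langle \nu_\infty(x_0),\nu_n(x_0)\rangle$ is bounded below by a positive constant for $n$ large, giving uniform transversality; equivalently, $\Pi\nu_\infty(x_0)$ stays small so the computation is a genuine perturbation of the $t_0=0$ case. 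I would state this as the reason $T_n$ is a well-defined diffeomorphism onto $A_h(S_n)$ (as already asserted in the text preceding the lemma) and then carry out the determinant bound as above. Aggregating: the proof is (i) choose the basis and write the determinant as a tangential Jacobian, (ii) invoke Lemma~\ref{lem:Lipschitz}/Lemma~\ref{lem:shape_compactness} for a uniform Lipschitz bound $L$ on $\nabla b_{V_\infty}$ near $S_\infty$, hence $\|M\|\le L$ uniformly in $n$, (iii) a singular-value estimate for $\mathrm{Id}+t_0M$ restricted to the tangent plane, (iv) pick $h_\varepsilon$ explicitly in terms of $\varepsilon$ and $L$.
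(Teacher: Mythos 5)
Your proposal is correct and follows essentially the same route as the paper's proof: a uniform (in $n$) Lipschitz bound on $\nabla b_{V_\infty}$ near $S_\infty$ coming from Lemmas~\ref{lem:Lipschitz} and~\ref{lem:shape_compactness}, the observation that at $t_0=0$ the map is an isometry onto its image (determinant $1$), and an $\operatorname{O}(h)$ perturbation estimate uniform in $n$ and $x_0$. Your write-up is simply more quantitative (a singular-value bound and an explicit $h_\eps=\eps/(4L)$ in place of the paper's appeal to smoothness of the determinant), and you are in fact more careful than the paper on the one delicate point — that $\nabla b_{V_\infty}(x_0)$ is only approximately normal to $T_{x_0}S_n$, so the $t_0=0$ determinant is only close to $1$ — which both arguments legitimately absorb into the ``for $n$ large enough'' clause via the $\mathscr{C}^1$ convergence of $b_{V_n}$ to $b_{V_\infty}$.
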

\begin{proof}
By Lemma~\ref{lem:shape_compactness},     $b_{V_\infty}$ is $\mathscr{C}^{1,1}$ in a neighborhood of $S_\infty$, thus
 $\nabla b_{V_\infty}|_{S_n}$ is Lipschitz continuous for $n$ large enough with a Lipschitz constant independent of $n$ and, in particular, there exists $C>0$ such that for all $n$ large enough
    $$
    \left|\nabla_{S_n}(\nabla b_{V_\infty})\right|_{L^\infty(S_n)}\leq C.
    $$
    Besides, the linear mapping $R_{x_0}:(-h,h)\times T_{x_0}S_n \ni (s,y)\mapsto s \nabla b_{V_\infty}(x_0)+ y$ is direct and orthogonal since $\nabla b_{V_\infty}(x_0)$ is the unit normal outward vector. Hence, its determinant is equal to 1. Since 
     the determinant is $\mathscr{C}^\infty$
    , we have
    $$
    \sup_{(t_0,x_0)\in (-h,h)\times S_n} |  |DT_n(t_0,x_0)| -\det R_{x_0} | =\operatorname{O}(h),
  $$
where the reminder term is uniformly bounded with respect to $n$ for $n$ large enough. This concludes the proof.
\end{proof}


In what follows $\eps>0$ is a small parameter to be fixed and $h$ is as in the statement of Lemma~\ref{lem:det_bounded}. We shall also assume that $A_h(S_n)\subset U_{\rmin}(S_\infty)$ for every $n$. 
Notice that, as soon as $\eps<1$, for $n$ large enough  $T_n$ is a diffeomorphism. Thus we can define on $A_h(S_n)$ the projector $p_{S_n}$ onto $S_n$ along the 
field $\nabla b_{V_{\infty}}$ by requiring that $p_{S_n}$ coincides with the $S_n$-component  of the inverse of $T_n$ (see Figure~\ref{fig:proj}).

This allows us to introduce $\tilde{\jmath}_n$, defined on $A_h(S_n)$ by
 $$
         \tilde{\jmath}_n=j_n\circ p_{S_n}.
  $$

Using \cite[Chap. 7, theorem 8.5]{delfour_shapes_2011}, for any $m \in \N \cup \{\infty\}$ large enough, we get
\begin{align}
    \label{eq:permutation_p}
    p_{S_n}=p_{S_n} \circ p_{S_m}
\end{align}
on $ A_{h}(S_n)\cap A_{h}(S_m)$.
Moreover, $p_{S_n}$ converges uniformly to $p_{S_\infty}$ in a neighborhood of $S_\infty$, since 
for every $x\in U_{\rmin}(S_\infty)$ one has
\[| p_{S_n}(x)-p_{S_\infty}(x)|=d_{S_\infty}(p_{S_n}(x))\leq \|d_{S_\infty}-d_{S_n}\|_{L^\infty(\overline{D})}\to 0\qquad \mbox{as }n\to\infty,\]
where the limit is a consequence of Lemma~\ref{lem:shape_compactness}.

Using the change of variable formula (also known as area formula for Lipschitz functions), one gets for 
$n\in\N\cup\{\infty\}$ large enough, every $f\in L^1(S_n)$, and every $\kappa\in (0,h)$,
    \begin{align*}
        \int_{-\kappa}^\kappa \int_{S_n} f(x)\, d\mu_{S_n}(x) dt= \int_{A_\kappa(S_n)}f\circ p_{S_n}(y) |DT_n(T_n^{-1}(y))| \, dy,
    \end{align*}
    which also rewrites
    \begin{align}
        \label{eq:integral_volume_surface_bis}
     \int_{S_n} f(x)\, d\mu_{S_n}(x)= \frac{1}{2\kappa}\int_{A_\kappa(S_n)} f\circ p_{S_n}(y) |DT_n(T_n^{-1}(y))| \, dy.
    \end{align}

     Let $\kappa$ be in $(0,h)$ and $\eta>0$ be small enough so that $\kappa-\eta>0$ and $\kappa+\eta<h$, that is $\eta<\min(\kappa,h-\kappa)$. Since $d_{S_n}\to d_{S_\infty}$ in $\mathscr{C}(\overline{D})$, there exists $N$ such that for all $n>N$,
    \begin{align}
        \label{eq:tubular_inclusion}
        A_{\kappa-\eta}(S_n) \subset A_{\kappa}(S_\infty) \subset A_{\kappa+\eta}(S_n).
    \end{align}
 Using that with Equation \eqref{eq:permutation_p}, we obtain
    \begin{align}
        \int_{S_\infty} |\tilde{\jmath}_n(x)|^2d\mu_{S_\infty}(x)&= \frac{1}{2 \kappa} \int_{A_{\kappa}(S_\infty)} |\tilde{\jmath}_n(y)|^2 |DT_\infty(T_\infty^{-1}(y))| \, dy\nonumber \\
        &\leq \frac{1}{2 \kappa } \int_{A_{\kappa+\eta}(S_n)} |\tilde{\jmath}_n(y)|^2 |DT_\infty(T_\infty^{-1}(y))| dy\nonumber\\
        &= \frac{1}{2 \kappa } \int_{A_{\kappa+\eta}(S_n)} |\tilde{\jmath}_n(y)|^2 \frac{|DT_\infty(T_\infty^{-1}(y))|}{|DT_n(T_n^{-1}(y))|}|DT_n(T_n^{-1}(y))| dy\nonumber\\
        &\leq \frac{\kappa +\eta}{\kappa}\frac{1+\eps}{1-\eps} \|j_n\|^2_{\F_{S_n}},\label{formula:1147}
    \end{align}
    which ensure that $\tilde{\jmath}_n$ belongs to $L^2(S_\infty, \R^3)$. (Notice however that $\tilde{\jmath}_n$  is not necessarily in $\mathscr{F}_{S_\infty}^0$, as it is neither, in general, a tangent vector field nor a divergence free one.)
Equation \eqref{formula:1147} actually shows that  $(\tilde{\jmath}_n)_{n\in \N}$ is bounded in $L^2(S_\infty,\R^3)$. Up to subsequence, it converges weakly to $j_\infty \in L^2(S_\infty,\R^3)$ with 
    $$
    \|j_\infty \|^2_{L^2(S_\infty,\R^3)}\leq \liminf_{n\to +\infty} \|j_n \|^2_{\mathscr{F}_{S_n}}.
    $$

    The remaining two steps of the proof consist first in showing the semicontinuity property
    \begin{equation}\label{eq:semicontinu}
    \|\operatorname{BS}_{S_\infty} j_\infty -B_T \|^2
    _{L^2(P,\R^3)} \leq 
    \liminf_{n\to +\infty} \|\operatorname{BS}_{S_n} j_n -B_T \|^2
    _{L^2(P,\R^3)}
    \end{equation}
    and then in checking that $j_\infty$ belongs to $\mathscr{F}_{S_\infty}^0$.
    Notice that, even if we have defined the operator $\operatorname{BS}_{S_\infty}$ only among the vector fields tangent to $S_\infty$, by a slight abuse of notation it still makes sense to consider $\operatorname{BS}_{S_\infty} j_\infty$, defined using formula \eqref{eq:BS-with-kernel}, even without having checked that $j_\infty$ is in $\mathscr{F}_{S_\infty}$.
    
  Both steps rely on the following lemma.
  
    \begin{lemma}\label{lem:step-co}
     Given $C>0$ and $\eps'>0$, there exists $N\in \N$ 
 such that for every 
 $r>0$ and every  $f\in \mathscr{C}^1(U_r(S_\infty))$  such that
$\|f\|_{L^\infty(U_r(S_\infty))}\le C$ and $f$ is $C$-Lipschitz continuous on 
$U_r(S_\infty)$, we have 
\[\left|\int_{S_n}f(x)j_n(x)d\mu_{S_n}(x)-\int_{S_\infty}f(x)\tilde{\jmath}_n(x)d\mu_{S_\infty}(x)\right|<\eps'\]
 for $n> N$.
\end{lemma}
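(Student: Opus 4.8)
The plan is to exploit the identity \eqref{eq:integral_volume_surface_bis} to convert both surface integrals into volume integrals over thin tubes, and then use the uniform $\mathscr{C}$-convergence of the distance functions to nest these tubes and estimate the error. Concretely, for $\kappa\in(0,h)$ write
\begin{align*}
\int_{S_n}f(x)j_n(x)\,d\mu_{S_n}(x)&=\frac{1}{2\kappa}\int_{A_\kappa(S_n)}(f\circ p_{S_n})(y)\,\tilde{\jmath}_n(y)\,|DT_n(T_n^{-1}(y))|\,dy,\\
\int_{S_\infty}f(x)\tilde{\jmath}_n(x)\,d\mu_{S_\infty}(x)&=\frac{1}{2\kappa}\int_{A_\kappa(S_\infty)}(f\circ p_{S_\infty}\circ p_{S_n})(y)\,\tilde{\jmath}_n(y)\,|DT_\infty(T_\infty^{-1}(y))|\,dy,
\end{align*}
where in the second line we used that $\tilde{\jmath}_n=j_n\circ p_{S_n}$ and, by \eqref{eq:permutation_p}, that on $A_\kappa(S_\infty)$ one has $\tilde{\jmath}_n\circ p_{S_\infty}=j_n\circ p_{S_n}\circ p_{S_\infty}=j_n\circ p_{S_n}=\tilde{\jmath}_n$, so that the only change in the integrand of the second line with respect to the first is the replacement of $f\circ p_{S_n}$ by $f\circ p_{S_\infty}\circ p_{S_n}$ and of $|DT_n|$ by $|DT_\infty|$.

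Then I would split the difference of the two right-hand sides into three contributions: (i) the mismatch of the integration domains $A_\kappa(S_n)$ versus $A_\kappa(S_\infty)$, controlled using the inclusions \eqref{eq:tubular_inclusion} (valid for $n>N$ once a small $\eta<\min(\kappa,h-\kappa)$ is fixed), so that the discrepancy is an integral over $A_{\kappa+\eta}(S_n)\setminus A_{\kappa-\eta}(S_n)$, whose contribution is bounded by a constant times $\frac{\eta}{\kappa}\,C\,\|j_n\|_{\mathscr{F}_{S_n}}\le \mathrm{const}\cdot\frac{\eta}{\kappa}$ via Cauchy--Schwarz, the bound $|DT_n|\le 1+\eps$, the $L^\infty$ bound on $f$, and the uniform bound on $\|j_n\|_{\mathscr{F}_{S_n}}$ coming from the minimality of $j_n$ (which forces $\lambda\|j_n\|_{\mathscr{F}_{S_n}}^2\le C(S_n)\le C(S_1)+1$ along a minimizing sequence, hence a uniform constant $C$); (ii) the difference coming from $|DT_n(T_n^{-1}(y))|-|DT_\infty(T_\infty^{-1}(y))|$, bounded by $2\eps$ pointwise thanks to Lemma~\ref{lem:det_bounded}, whence a contribution $\le \mathrm{const}\cdot\eps\,C\,\|j_n\|_{\mathscr{F}_{S_n}}\le\mathrm{const}\cdot\eps$; and (iii) the difference coming from $(f\circ p_{S_n})(y)-(f\circ p_{S_\infty}\circ p_{S_n})(y)$, which is bounded pointwise by $C\,|p_{S_n}(y)-p_{S_\infty}(p_{S_n}(y))|=C\,d_{S_\infty}(p_{S_n}(y))\le C\,\|d_{S_\infty}-d_{S_n}\|_{L^\infty(\overline D)}\to 0$, using the Lipschitz continuity of $f$ and the estimate for $|p_{S_n}-p_{S_\infty}|$ established just before the lemma, so that this contribution is $\le\mathrm{const}\cdot\|d_{S_\infty}-d_{S_n}\|_{L^\infty(\overline D)}\,\|j_n\|_{\mathscr{F}_{S_n}}$, which tends to $0$ by Lemma~\ref{lem:shape_compactness}. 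Collecting these, the total difference is $\le\mathrm{const}\cdot\bigl(\tfrac{\eta}{\kappa}+\eps+\|d_{S_\infty}-d_{S_n}\|_{L^\infty(\overline D)}\bigr)$, with a constant depending only on $C$, $\kappa$, $h$ and the (fixed) bound on $\|j_n\|_{\mathscr{F}_{S_n}}$ but not on $f$ or $n$.

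Finally, given $\eps'>0$, I fix $\eps$ small and $\eta$ small (with $\kappa$ already chosen) so that the first two terms are each $<\eps'/3$, and then choose $N$ large enough that, for $n>N$, both \eqref{eq:tubular_inclusion} holds and $\mathrm{const}\cdot\|d_{S_\infty}-d_{S_n}\|_{L^\infty(\overline D)}<\eps'/3$; this yields the claimed bound uniformly over all admissible $f$. The main obstacle, and the reason the lemma is phrased with a single $N$ working for all such $f$, is precisely this uniformity: one must be careful that none of the three error bounds involves $f$ except through the constants $C$ (its sup-norm and Lipschitz constant), which is why the estimates above are all of the form $C\times(\text{something independent of }f\text{ tending to }0)$. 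A secondary technical point to handle carefully is the legitimacy of applying \eqref{eq:integral_volume_surface_bis} and \eqref{eq:permutation_p} to $\tilde{\jmath}_n$, which is merely $L^2$ and not smooth: this is fine because the area formula holds for $L^1$ functions and $p_{S_n}$, $p_{S_\infty}$ are Lipschitz, so the compositions and changes of variables are all justified almost everywhere.
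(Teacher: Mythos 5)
Your proof is correct and follows essentially the same route as the paper's: convert both surface integrals into normalized volume integrals over thin tubes via \eqref{eq:integral_volume_surface_bis}, control the domain mismatch through the inclusions \eqref{eq:tubular_inclusion} and the uniform $L^1$ bound on $j_n$, bound the Jacobian discrepancy via Lemma~\ref{lem:det_bounded}, and handle the integrand discrepancy through the Lipschitz bound on $f$ together with the uniform convergence of the projectors, all with constants depending on $f$ only through $C$. The only (immaterial) difference is bookkeeping: the paper compares $A_{\kappa-\eta}(S_n)$ with weight $\tfrac{1}{2(\kappa-\eta)}$ against $A_\kappa(S_\infty)$ with weight $\tfrac{1}{2\kappa}$, picking up an extra normalization term, whereas you keep the weight $\tfrac{1}{2\kappa}$ on both sides and absorb the resulting two-sided domain mismatch into the same annulus $A_{\kappa+\eta}(S_n)\setminus A_{\kappa-\eta}(S_n)$.
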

    \begin{proof}
    Let $r>0$ and  $f\in \mathscr{C}^1(U_r(S_\infty))$ be such that
$\|f\|_{L^\infty(U_r(S_\infty))}\le C$ and $f$ is $C$-Lipschitz continuous on 
$U_r(S_\infty)$.
  Up to taking $h$ small enough, we can assume that 
  \begin{equation}\label{eq:udl}
  A_h(S_n)\subset U_r(S_\infty)\qquad \mbox{for $n$ large enough}. 
    \end{equation}
    
As above, consider $\kappa\in (0,h)$, $0<\eta<\min(\kappa,h-\kappa)$, and $n$ large enough so that \eqref{eq:tubular_inclusion} holds true. 
By \eqref{eq:integral_volume_surface_bis}, we have
    \begin{align*}
 \left|\int_{S_n}f(x)j_n(x)d\mu_{S_n}(x)-\int_{S_\infty}f(x)\tilde{\jmath}_n(x)d\mu_{S_\infty}(x)\right|
        ={}&   \bigg| \frac{1}{2(\kappa-\eta)} \int_{A_{\kappa-\eta}(S_n)} f(p_{S_n}(x)) \tilde{\jmath}_n(x) |DT_n| \, dx\\
      &  - \frac{1}{2\kappa} \int_{A_{\kappa}(S_\infty)} f(p_{S_\infty}(x))\tilde{\jmath}_n(x) |DT_\infty| \, dx \bigg|,
    \end{align*}
     where, for notational simplicity, we write   $|DT_n|$ for $|DT_n(T_n^{-1}(x))|$
   and $|DT_\infty|$ for $|DT_\infty(T_\infty^{-1}(x))|$. 
   Hence, 
   \[    \left|\int_{S_n}f(x)j_n(x)d\mu_{S_n}(x)-\int_{S_\infty}f(x)\tilde{\jmath}_n(x)d\mu_{S_\infty}(x)\right|\le 
 A_1+A_2\]
 where we added and subtracted $\frac{1}{2\kappa}\int_{A_{\kappa-\eta}(S_n)} f(p_{S_\infty}(x))\tilde{\jmath}_n(x) |DT_\infty| \, dx$ to get
 \begin{align*}      
 A_1&= \left| \int_{A_{\kappa-\eta}(S_n)}\left( \frac{1}{2(\kappa-\eta)} f(p_{S_n}(x)) \tilde{\jmath}_n(x) |DT_n| - \frac{1}{2\kappa} f(p_{S_\infty}(x)) \tilde{\jmath}_n(x) |DT_\infty| \right) dx \right|, \\
A_2 &= \left| \frac{1}{2\kappa}\int_{A_{\kappa}(S_\infty)\setminus A_{\kappa-\eta}(S_n)} f(p_{S_\infty}(x))  \tilde{\jmath}_n(x) |DT_\infty| \, dx \right|.
    \end{align*}
    
    We are going to show that $A_1$ and $A_2$ can be made arbitrarily small by suitably choosing  $\kappa$ and $\eta$ (depending only on $C$ and not on the specific function $f$) and letting $n$ be large enough.  
    
    The  term $A_2$ can be estimated using the inequality $\|f\|_{L^\infty(U_r(S_\infty))}\le C$, as follows: 
    \begin{align*}
        A_2 &\leq C\frac{1+\eps}{1-\eps}\int_{A_{\kappa}(S_\infty)\setminus A_{\kappa-\eta}(S_n)} |\tilde{\jmath}_n(x) | |DT_n|\, dx 
        \leq C \frac{1+\eps}{1-\eps}\int_{A_{\kappa+\eta}(S_n)\setminus A_{\kappa-\eta}(S_n)} |\tilde{\jmath}_n(x) | |DT_n|\, dx\\
        & = 4 \eta C\frac{1+\eps}{1-\eps}\|j_n\|_{L^1(S^n)}, 
         \end{align*}
         where the factor $4$ comes from the fact that the measure of $(-\kappa-\eta,-\kappa+\eta)\cup (\kappa-\eta,\kappa+\eta)$ is equal to $4\eta$. 
   Notice that     $\|j_n\|_{L^1(S^n)}  \le \|j_n\|_{\mathscr{F}_{S_n}} \sqrt{\mathscr{H}^2(S_n)}$ is bounded uniformly with respect to $n$, so that $A_2$ can be made arbitrarily small by choosing $\eta$ small enough (depending only on $C$). 
         
        Let us 
       now focus on the term $A_1$. 
Since
        $$|f(x_1)-f(x_2)|\leq C |x_1-x_2|,\qquad \forall x_1,x_2\in U_{r}(S_{\infty}),$$
and because of \eqref{eq:udl},        it follows that
        $$
        \sup_{x\in A_{\kappa-\eta}(S_n)} |f(p_{S_n}(x))-f(p_{S_\infty}(x))|\leq C \|p_{S_n}-p_{S_\infty}\|_{L^\infty(A_{\kappa-\eta}(S_n))}\leq C \|p_{S_n}-p_{S_\infty}\|_{L^\infty(A_{h}(S_\infty))} $$
    for $n$ large enough.

    Hence, we have the estimates
    \begin{align*}
    A_1\leq{}& \frac{1}{2(\kappa-\eta)}\bigg| \int_{A_{\kappa-\eta}(S_n)}\left(  f(p_{S_n}(x)) \tilde{\jmath}_n(x) |DT_n| -  f(p_{S_\infty}(x)) \tilde{\jmath}_n(x) |DT_n| \right) dx \bigg| \\
    &+ \bigg| \int_{A_{\kappa-\eta}(S_n)} \left(\frac{1}{2(\kappa-\eta)} f(p_{S_\infty}(x)) \tilde{\jmath}_n(x) |DT_n| - \frac{1}{2\kappa} f(p_{S_\infty}(x))\tilde{\jmath}_n(x) |DT_n| \right) dx \bigg| \\
    &+ \frac{1}{2\kappa}\bigg| \int_{A_{\kappa-\eta}(S_n)} \left( f(p_{S_\infty}(x))  \tilde{\jmath}_n(x) |DT_n| -  f(p_{S_\infty}(x))  \tilde{\jmath}_n(x) |DT_\infty| \right) dx \bigg| \\
    \leq{}&  C \|p_{S_n}-p_{S_\infty}\|_{L^\infty(A_h
    (S_\infty))} \|j_n(x)\|_{L^1(S_n)} + C \|j_n(x)\|_{L^1(S_n)} \left(1-\frac{\kappa-\eta}{\kappa}\right)\\
    &+ C \frac{\kappa-\eta}{\kappa} \|j_n(x)\|_{L^1(S_n)} \left(1- \frac{1+\eps}{1-\eps}\right).
    \end{align*}
    Hence $A_1$ can be made arbitrarily small choosing $\eps$ and then $\eta$ small enough, and letting $n$ large enough. 
       \end{proof}

      Let us start the proof of \eqref{eq:semicontinu} by  comparing $\operatorname{BS}_{S_n}$ and $\operatorname{BS}_{S_\infty}$. 
      Given $y\in P$, one has
    \begin{align*}
 | \operatorname{BS}_{S_n}(j_n)(y)-\operatorname{BS}_{S_\infty}(\tilde{\jmath}_n)(y) |
        &=   \bigg| \int_{S_n} K(x,y) \times j_n(x)\, dx
        - \int_{S_\infty} K(x,y) \times \tilde{\jmath}_n(x) \, dx \bigg|.
    \end{align*}
    
  Notice that $|K(\cdot,y)|$ is bounded in a neighborhood of $S_\infty$, 
  uniformly  with respect to $y\in P$, since $\sup_{(x,y)\in S_n\times P} |K(x,y)| \leq \frac{1}{\delta^2}$.
  Moreover, for every $y\in P$ and every $\rho>0$, the map 
       $x \mapsto \|D_x K(x,y)\|$ is upper bounded by $\frac{4}{\rho^3}$ outside $U_\rho(P)$ according to \eqref{eq:der_K}. 
Assume that $h<\delta$, so that     $A_{h}(S_n)$ is at distance at least $\delta-h$ from $P$ for every $n$.        
           Consider $\rho<\delta-h$ and 
       a  Lipschitz neighborhood ${\cal N}$ of  $\R^3 \setminus U_{\delta-h}(P)$
       not intersecting $U_\rho(P)$. Since the geodesic distance in ${\cal N}$  is equivalent to the restriction to ${\cal N}$ of the standard Euclidean distance,  
       we deduce that there exists $\tilde C>0$ independent of $y$ such that
        $$|K(x_1,y)-K(x_2,y)|\leq \tilde C |x_1-x_2|,\qquad \forall x_1,x_2\in \R^3\setminus U_{\delta-h}(P).$$
We deduce from Lemma~\ref{lem:step-co} that 
for every $\eps'>0$ there exists $N>0$ such that for any integer $n>N$,
    $$\|\operatorname{BS}_{S_n}(j_n)-\operatorname{BS}_{S_\infty}(\tilde{\jmath}_n)\|_{L^2(P,\R^3)}\leq \eps',$$
    and, in particular,
    $$
    \|\operatorname{BS}_{S_\infty} \tilde{\jmath}_n -B_T \|
    _{L^2(P,\R^3)} \leq 
    \|\operatorname{BS}_{S_n} j_n -B_T \|
    _{L^2(P,\R^3)}+ \eps'. 
    $$
    Using the compactness of $\operatorname{BS}_{S_\infty}$, we have 
    $$
    \|\operatorname{BS}_{S_\infty} j_\infty -B_T \|
    _{L^2(P,\R^3)} \leq 
    \liminf_{n\to +\infty} \|\operatorname{BS}_{S_n} j_n -B_T \|
    _{L^2(P,\R^3)}+ \eps'. 
    $$
    This concludes the proof of \eqref{eq:semicontinu}, since $\eps'$ is arbitrary. 
    
    To conclude the proof, it remains to check that $j_\infty$ belongs to $\mathscr{F}_{S_\infty}^0$.
    By weak convergence of $\tilde{\jmath}_n$ to $j_\infty$ and according to Lemma~\ref{lem:step-co},
       \begin{align*}
     \|\langle j_\infty, \nabla b_{V_\infty}\rangle\|_{L^2(S_\infty ,\R^3 )}&=
             \lim_{n\to \infty}\|\langle\tilde{\jmath}_n, \nabla b_{V_\infty}\rangle\|_{L^2(S_\infty ,\R^3 )}
             =\lim_{n\to \infty}\|\langle j_n, \nabla b_{V_\infty}\rangle\|_{L^2(S_n ,\R^3 )}\\
             &\le \limsup_{n\to \infty}\|\langle j_n, \nabla b_{V_\infty}-\nabla b_{V_n}\rangle\|_{L^2(S_n ,\R^3 )},
    \end{align*}
    where we used that $ j_n$ is  orthogonal to $\nabla b_{V_n}$ everywhere on $S_n$. 
  According to Lemma~\ref{lem:Lipschitz}, moreover, 
    \begin{align*}
\lim_{n\to \infty}            \|\nabla b_{V_\infty} -\nabla b_{V_n}\|_{L^\infty(S_\infty, \R^3)}=0,
    \end{align*}
 and we conclude that $\|\langle j_\infty, \nabla b_{V_\infty}\rangle\|_{L^2(S_\infty ,\R^3 )}=0$ 
 since the sequence $(\| j_n\|_{L^2(S_\infty,\R^3)})_{n\in\N}$ is bounded.
  This proves that $j_\infty$ is a vector field tangent to $S_\infty$.

    To prove that $j_\infty$ is divergence free (in distributional sense), we have to check that $ j_\infty$ is orthogonal to $\{\nabla_{S_\infty} f \mid f\in\mathscr{C}^1(S_\infty)\}$. Indeed, this 
  characterization of divergence-free vector fields   
    follows from the Hodge decomposition (see Appendix~\ref{append:Hodge}).
    For $g\in \mathscr{C}^\infty(\R^3)$, 
    since $\dive j_n=0$ on $S_n$, one has
\begin{equation*}
  0=  \int_{S_n} \langle j_n, \nabla_{S_n}g\rangle d\mu_{S_n}=  \int_{S_n} \langle j_n,  \left( \nabla g -\langle \nabla g, \nabla b_{V_n}\rangle \nabla b_{V_n} \right) \rangle d\mu_{S_n}.
\end{equation*}

Set $G_n:=\nabla g -\langle \nabla g, \nabla b_{V_n}\rangle \nabla b_{V_n}$ for $n\in \N\cup \{\infty\}$. 
Notice that $G_n$ converges uniformly to $G_\infty$ in a neighborhood of $S_\infty$. 
Hence, again using Lemma~\ref{lem:step-co},
       \begin{align*}
  \int_{S_\infty} \langle j_\infty, G_\infty\rangle d\mu_{S_\infty}(x) &=
             \lim_{n\to \infty}\int_{S_\infty} \langle \tilde{\jmath}_n, G_\infty\rangle d\mu_{S_\infty}(x)=
             \lim_{n\to \infty}\int_{S_n}\langle  \tilde{\jmath}_n, G_\infty\rangle d\mu_{S_n}(x)\\
              &=
             \lim_{n\to \infty}\int_{S_n}\langle  \tilde{\jmath}_n, G_n\rangle d\mu_{S_n}(x)=0.
    \end{align*}
  
    This concludes the proof of Theorem \ref{th:existence_SOP}.
\section{Shape differentiation for Problem~\eqref{SOP}}\label{sec:diffSOP}
\subsection{Reminders on the Hadamard boundary variation method}\label{sec:Had}

Let us recall hereafter some notions of topology on sets of regular domains 
defined in terms of 
particular perturbations called \emph{identity perturbations}. 
The latter are of the form $\tau=\operatorname{Id}+\theta$, 
where $\theta$ is small enough in a suitable sense. More precisely, according to the approach developed in \cite{MuratSimon1976,Simon}, one defines
\begin{equation*}
\mathcal{V}^{k,\infty}=\{\tau:\R^3\rightarrow \R^3\mid (\tau-\operatorname{Id})\in W^{k,\infty}(\R^3,\R^3) \},
\end{equation*}
and
\begin{equation*}
\mathcal{T}^{k,\infty}=\{\tau:\R^3\rightarrow \R^3\mid \tau\in \mathcal{V}^{k,\infty}, \ \tau\textrm{ is invertible,  and }\tau^{-1}\in \mathcal{V}^{k,\infty} \},
\end{equation*}
with $k\in \N^*$. 

Let us recall that the space $W^{k,\infty}(\R^3,\R^3)$ endowed with the norm
$$
\Vert \tau\Vert_{W^{k,\infty}}=\underset{0\leq |\alpha|\leq k}{\operatorname{sup}} \Vert D^\alpha u\Vert_\infty
$$
is a Banach space. 
Choosing $\tau$ in $\mathcal{T}^{2,\infty}$ 
allows to
preserve the topological and regularity properties of sets we are interested in, as highlighted in the next result. 

\begin{lemma}\label{lem:HadamProp}
Let $\Omega_0$ be an open bounded subset of $\R^3$ and let $\tau\in \mathcal{T}^{2,\infty}$. 
\begin{itemize}
\item If $\partial\Omega_0$ is of class $\mathscr{C}^{1,1}$, then  $\tau(\Omega_0)$ is an open bounded domain whose boundary is of class $\mathscr{C}^{1,1}$. Furthermore, one has $\tau(\partial\Omega_0)=\partial(\tau(\Omega_0))$.
\item If $\theta\in W^{2,\infty}(\R^3,\R^3)$ is such that $\Vert \theta\Vert_{W^{2,\infty}}<1$, then $\operatorname{Id}+\theta\in \mathcal{T}^{2,\infty}$.
\end{itemize}
\end{lemma}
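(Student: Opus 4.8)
The plan is to prove the two statements of Lemma~\ref{lem:HadamProp} in reverse order, since the second one is elementary and feeds directly into the standard proof of the first.

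For the second item, suppose $\theta\in W^{2,\infty}(\R^3,\R^3)$ with $\Vert\theta\Vert_{W^{2,\infty}}<1$ and set $\tau=\operatorname{Id}+\theta$. Clearly $\tau-\operatorname{Id}=\theta\in W^{2,\infty}$, so $\tau\in\mathcal{V}^{2,\infty}$. The derivative $D\tau=I_3+D\theta$ satisfies $\Vert D\theta\Vert_\infty\le\Vert\theta\Vert_{W^{2,\infty}}<1$, so $D\tau$ is everywhere invertible with uniformly bounded inverse (Neumann series), and a standard global inversion argument (e.g. Hadamard--L\'evy, or the fact that a $\mathscr C^1$ map which is a local diffeomorphism and proper is a diffeomorphism — here properness follows since $|\tau(x)-x|\le\Vert\theta\Vert_\infty$ forces $|\tau(x)|\to\infty$ as $|x|\to\infty$) shows $\tau$ is a global $\mathscr C^1$-diffeomorphism of $\R^3$. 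It remains to check $\tau^{-1}\in\mathcal{V}^{2,\infty}$. Writing $\psi=\tau^{-1}-\operatorname{Id}$, one has $\psi=-\theta\circ\tau^{-1}$, so $\Vert\psi\Vert_\infty\le\Vert\theta\Vert_\infty<\infty$; differentiating $\tau\circ\tau^{-1}=\operatorname{Id}$ gives $D(\tau^{-1})=(D\tau)^{-1}\circ\tau^{-1}=(I_3+D\theta\circ\tau^{-1})^{-1}$, which is bounded in $L^\infty$ by $1/(1-\Vert\theta\Vert_{W^{2,\infty}})$; and differentiating once more and using the chain and product rules together with $D^2\theta\in L^\infty$ and the bounds just obtained shows $D^2(\tau^{-1})\in L^\infty$. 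Hence $\psi\in W^{2,\infty}$ and $\tau^{-1}\in\mathcal{V}^{2,\infty}$, i.e. $\tau\in\mathcal{T}^{2,\infty}$.

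For the first item, let $\Omega_0$ be open and bounded with $\partial\Omega_0$ of class $\mathscr C^{1,1}$, and let $\tau\in\mathcal{T}^{2,\infty}$. Since $\tau$ is a homeomorphism of $\R^3$, it maps open sets to open sets, interiors to interiors and boundaries to boundaries, so $\tau(\Omega_0)$ is open and $\partial(\tau(\Omega_0))=\tau(\partial\Omega_0)$; boundedness of $\tau(\Omega_0)$ follows from $|\tau(x)-x|\le\Vert\tau-\operatorname{Id}\Vert_\infty$ and boundedness of $\Omega_0$. For the $\mathscr C^{1,1}$ regularity of $\tau(\partial\Omega_0)$: around any point $x_0\in\partial\Omega_0$ there is a $\mathscr C^{1,1}$ local chart flattening $\partial\Omega_0$; composing it with the restriction of $\tau$ (which, together with $\tau^{-1}$, is $W^{2,\infty}$, hence $\mathscr C^{1,1}$ by Sobolev embedding in $\R^3$, with $\mathscr C^{1,1}$ inverse) yields a $\mathscr C^{1,1}$ local chart for $\tau(\partial\Omega_0)$, because $\mathscr C^{1,1}$ regularity of the boundary is stable under composition with $\mathscr C^{1,1}$ diffeomorphisms. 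This gives $\tau(\partial\Omega_0)=\partial(\tau(\Omega_0))$ of class $\mathscr C^{1,1}$.

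The only genuinely delicate point is the global invertibility and the $W^{2,\infty}$-regularity of $\tau^{-1}$ in the second item — everything else is bookkeeping with the chain rule and elementary point-set topology. I would present the global-diffeomorphism claim via the properness-plus-local-diffeomorphism criterion (or simply cite \cite{MuratSimon1976,Simon}, where $\mathcal{T}^{k,\infty}$ and this lemma originate), and spell out only the two derivative estimates for $\tau^{-1}$, since those are what make the regularity order $W^{2,\infty}$ propagate correctly.
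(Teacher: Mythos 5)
Your proposal is correct, and for the first item it matches what the paper does (the paper simply invokes the homeomorphism property for $\tau(\partial\Omega_0)=\partial(\tau(\Omega_0))$ and leaves the $\mathscr{C}^{1,1}$ chart-composition argument as ``standard''; you spell it out via the identification $W^{2,\infty}\simeq\mathscr{C}^{1,1}$, which is exactly the intended route). For the second item, however, you take a genuinely different path: the paper obtains global invertibility of $\operatorname{Id}+\theta$ by ``a direct application of the Banach fixed-point theorem'' --- for each $y$ the map $x\mapsto y-\theta(x)$ is a contraction because $\Vert D\theta\Vert_\infty\le\Vert\theta\Vert_{W^{2,\infty}}<1$, so it has a unique fixed point $x=\tau^{-1}(y)$, which gives bijectivity and a global Lipschitz bound on $\tau^{-1}$ in one stroke (this is also why the hypothesis is phrased as $\Vert\theta\Vert_{W^{2,\infty}}<1$: it makes $\theta$ a contraction). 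You instead establish local invertibility via the Neumann series for $I_3+D\theta$ and globalize by properness (a proper local homeomorphism of $\R^3$ is a covering map of a simply connected space, hence a homeomorphism). Both arguments are valid; the fixed-point route is more elementary and self-contained, while yours needs the extra topological input but is perhaps more geometric. The subsequent bookkeeping --- $\tau^{-1}-\operatorname{Id}=-\theta\circ\tau^{-1}$, $D(\tau^{-1})=(I_3+D\theta\circ\tau^{-1})^{-1}$, and the chain-rule estimate for the second derivative --- is the same in either case and you carry it out correctly.
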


The first statement of this lemma follows from standard arguments. For instance, the property ``$\tau(\partial\Omega_0)=\partial(\tau(\Omega_0))$'' directly results from the fact that $\tau$ defines a homeomorphism. The second one comes from a direct application of the Banach fixed-point theorem. We refer to \cite[Chapter~4]{delfour_shapes_2011} for more explanations.

As a consequence of the lemma, 
$\mathcal{T}^{2,\infty}$ 
induces
a topology on the set $\mathcal{O}^1$ of open sets of $\R^3$ whose boundary belongs to $\mathscr{C}^{1,1}$  (according for instance to \cite[Assertion 2.52]{MuratSimon1976}). Given $\Omega_0\in\mathcal{O}^1$, 
let $\mathscr{V}(\Omega_0,\varepsilon)$ be the set of domains of the type $(\operatorname{Id}+\theta)\Omega_0$ with $\Vert \theta\Vert_{W^{2,\infty}}\leq \eps$ and $\varepsilon>0$ small enough so that $\operatorname{Id}+\theta$ is a diffeomorphism (Lemma~\ref{lem:HadamProp}). Then, one defines a topology on the space $\mathcal{O}^1$ with the help of 
the neighborhood basis given by the sets of $\mathscr{V}(\Omega_0,\varepsilon)$. Furthermore, it is shown in \cite{micheletti} that every neighborhood of $\Omega_0$ in $\mathscr{V}(\Omega_0,\varepsilon)$  is metrizable with a Courant-type  distance (induced by that 
 associated with $\|\cdot\|_{W^{2,\infty}}$ in ${\cal T}^{2,\infty}$) 
and 
has the structure of complete separable manifold.



Let us conclude this section by recalling the notion of shape differentiability.
\begin{definition}
A shape functional $\Omega\mapsto J(\Omega)$ is said to be \emph{shape differentiable at $\Omega$} (in the sense of Hadamard) in the class of domains with $\mathscr{C}^{1,1}$ boundary
whenever the underlying mapping 
$$
W^{2,\infty}(\R^{3},\R^{3})\ni \theta  \mapsto J(\Omega_{\theta })\in \R,
 $$ 
with $\Omega_{\theta } = (\mathrm{Id} + {\theta })(\Omega)$,
 is differentiable in the sense of Fr\'echet  at $\theta =0$. 
The corresponding differential $\langle dJ(\Omega),\cdot \rangle $ is the so-called \emph{shape derivative of $J$ at $\Omega$} and, by definition of Fr\'echet differential,  the following expansion holds:
\begin{equation*}
 J(\Omega_{\theta }) = J(\Omega) + \langle dJ(\Omega),\theta \rangle  + \operatorname{o}(\theta ), \qquad \mathrm{where~} \frac{\operatorname{o}(\theta )}{ \Vert \theta \Vert_{W^{2,\infty}(\R^3,\R^3) }} \xrightarrow[\theta  \to 0]{} 0.
\end{equation*}
\end{definition}
In the next section we study the shape differentiability of the cost $C$. In order to fit Definition 2, $C$ is implicitly identified with a functional $V\mapsto C(\partial V)=C(S)$ on the set of $\mathscr{C}^{1,1}$ toroidal domains. 

\subsection{Shape derivative of the cost functional $C$}\label{sec:shapeD}

This section and the next one are devoted to the computation of the shape derivative of the functional $C$.

\begin{theorem}\label{theo:shapeDer}
Let $S=\partial V\in \mathscr{O}_{ad}$. Let $Z_P\in \mathcal{L}(L^2(P,\R^3),\mathcal{F}_S)$ and $\widehat{Z}_P$, a bilinear mapping from $L^2(P,\R^3)\times \mathcal{F}_S^0$ into $\mathcal{F}_S$, defined by
\begin{eqnarray*}
Z_P(k) &=& \int_P K(\cdot,y)\times k(y) \, d\mu_P(y),\\
\widehat{Z}_P(k,j)(x) &=& \int_P
 D_x\left(\frac{x-y}{|x-y|^3}\right)^T \big(k(y) \times j(x) \big)d\mu_P(y), \qquad \forall x\in S.\\
\end{eqnarray*}

The functional $C$ defined by \eqref{defpb:PS} is shape differentiable at $S$. 
 Moreover, for every $\theta\in W^{2,\infty}(\R^3,\R^3)$ one has
\begin{equation*}
\langle dC(S),\theta\rangle =  \int_S \langle\theta ,( X_1  -\operatorname{div}_S(X_2)_{i:} )\rangle  \, d\mu_S
\end{equation*}
with
\begin{align*}
    X_1&= -2 \widehat{Z}_P(\operatorname{BS}_S j_S- B_T,j_S),\\
    X_2&= -2 Z_P (\operatorname{BS}_S j_S- B_T) j_S^T +2 \lambda j_S j_S^T - \lambda |j_S|^2 (I_3-\nu\nu^T),
\end{align*}
where for $i\in \{1,2,3\}$, $(X_2)_{i:}$ denotes the $i$-th line of $X_2$ seen as a column vector, and $\nu$ denotes the outward normal vector to $S=\partial V$.
\end{theorem}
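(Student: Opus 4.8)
The plan is to exploit the explicit formula \eqref{eq:C(S)} for $C(S)$ together with the variational characterization \eqref{cionOptimjS} of the optimal current $j_S$, so as to avoid differentiating the (surface-dependent) minimizer directly. Concretely, write $j=j_S$ and $u_S=\operatorname{BS}_S j_S-B_T$, and note that by the envelope/adjoint-state structure we have $C(S)=\|u_S\|^2_{L^2(P)}+\lambda\|j_S\|^2_{\mathscr{F}_S}$ with $\lambda j_S+\operatorname{BS}_S^\dagger u_S=0$. First I would fix $\theta\in W^{2,\infty}(\R^3,\R^3)$, set $\tau_t=\operatorname{Id}+t\theta$, $S_t=\tau_t(S)$, $V_t=\tau_t(V)$, and transport everything back to the fixed surface $S$ via the pullback by $\tau_t$. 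The key point is that a tangent, divergence-free field $j_t\in\mathscr{F}^0_{S_t}$ corresponds, under pullback, to a $t$-dependent family in a fixed space (one must track the Jacobian of $\tau_t|_S$, i.e.\ the change in the Riemannian volume form $\mu_{S_t}$, whose derivative at $t=0$ is $\operatorname{div}_S\theta\,\mu_S$, and the change in the tangent-space/divergence-free constraint). This reduces $C(S_t)$ to the infimum over a $t$-independent Hilbert space of a functional depending smoothly on $t$ through (a) the pulled-back Biot–Savart kernel $K(\tau_t(x),y)$ for $y\in P$ fixed, and (b) the pulled-back volume element on $S$. Differentiability in $t$ (indeed Fréchet differentiability in $\theta$) then follows from smooth dependence of these ingredients, using the uniform bound on $K$ on $S\times P$ from \eqref{H1} and the regularity of $\tau_t$; the minimum is attained (Lemma~\ref{lem:existence}) and one can differentiate under the $\inf$ by the standard argument (upper bound from freezing $j$, lower bound from the optimality condition at $S_t$), so that only the partial derivative in $t$ at fixed current survives.

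Next I would compute that partial derivative. Differentiating $\|\operatorname{BS}_{S_t}j-B_T\|^2_{L^2(P)}$ at $t=0$ with $j$ frozen produces two contributions: one from $D_x K(x,y)\theta(x)$ inside the cross product — this is where $\widehat Z_P$ and the $-2\widehat Z_P(\operatorname{BS}_S j_S-B_T,j_S)$ term come from, after moving the derivative of the kernel onto a transpose acting on $k(y)\times j(x)$ using \eqref{eq:der_K} — and one from the variation of $\mu_S$, giving a factor $\operatorname{div}_S\theta$ multiplying $|{}\cdot{}|^2$-type integrands; similarly $\lambda\|j\|^2_{\mathscr{F}_S}$ contributes $\lambda\int_S|j_S|^2\operatorname{div}_S\theta\,d\mu_S$. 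Then I would rewrite the $\operatorname{div}_S\theta$ terms by tangential integration by parts on the closed surface $S$: $\int_S f\,\operatorname{div}_S\theta\,d\mu_S=-\int_S\langle\nabla_S f,\theta\rangle d\mu_S+\int_S f H\langle\theta,\nu\rangle d\mu_S$, or more systematically, use the tensorial identity $\int_S A:e(\theta)\,d\mu_S$-type formulas to produce the $\operatorname{div}_S$ of a matrix field acting on $\theta$; this is the origin of the $-\operatorname{div}_S(X_2)_{i:}$ structure and of the $-\lambda|j_S|^2(I_3-\nu\nu^T)$ correction (the projection onto the tangent space coming from the fact that the deformation of the constraint surface only sees the normal part, while tangential reparametrizations are gauge). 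The contribution $+2\lambda j_S j_S^T$ comes from the variation of the metric inside $\|j\|^2_{\mathscr{F}_S}$ combined with the transport of the vector $j$ itself (the term $D\theta\, j_S$), and $-2 Z_P(\operatorname{BS}_S j_S-B_T)j_S^T$ from the analogous transport inside the Biot–Savart term, after using the adjoint operator $Z_P=\operatorname{BS}_S^\dagger$ (note $Z_P(k)(x)=\int_P K(x,y)\times k(y)\,d\mu_P(y)$ is exactly $\operatorname{BS}_S^\dagger$ up to the cross-product sign convention) to pair $\operatorname{BS}_S j-B_T$ against the variation.

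After assembling, I would collect all terms into the form $\int_S\langle\theta, X_1-\operatorname{div}_S(X_2)_{i:}\rangle d\mu_S$, checking that the non-intrinsic pieces (those depending on $\theta$ only through its tangential part or through $D\theta$ in a non-normal way) cancel, as they must since $C(S_t)$ depends only on the geometric surface; in particular tangential components of $\theta$ should not contribute, which gives a useful consistency check and forces the symmetrization $j_S j_S^T$ and the $(I_3-\nu\nu^T)$ terms. The main obstacle, I expect, is the rigorous justification of differentiating under the infimum in the non-standard functional-analytic setting here: the current lives in the $S$-dependent space $\mathscr{F}^0_{S_t}$ of divergence-free tangent fields, so the pullback correspondence must be set up carefully (one needs that $\tau_t$ maps $\mathscr{F}^0_S$ isomorphically onto $\mathscr{F}^0_{S_t}$ up to an explicit correction, and that these isomorphisms depend smoothly on $t$), and one must ensure that the optimal currents $j_{S_t}$ stay in a bounded set and that the resolvent $(\lambda\operatorname{Id}+\operatorname{BS}_{S_t}^\dagger\operatorname{BS}_{S_t})^{-1}$ depends continuously (indeed differentiably) on $t$ in operator norm — this uses compactness of $\operatorname{BS}_S$ and the spectral gap provided by $\lambda>0$. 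The kernel computations and the tangential integrations by parts are routine once the framework is in place; I would relegate them to a sequence of lemmas.
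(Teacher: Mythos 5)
Your proposal follows essentially the same route as the paper: pull everything back to the fixed surface via a Jacobian-corrected transport that maps $\mathscr{F}_S^0$ isomorphically onto $\mathscr{F}_{S^\eps}^0$ (the paper's $\Phi^\eps$, with divergence-preservation checked via the Hodge decomposition), invoke the envelope/optimality argument so that only the partial derivative at frozen current survives, convert the kernel and volume-element variations into the $Z_P$, $\widehat{Z}_P$ and $\operatorname{div}_S\theta$ contributions, and finish with tangential integration by parts to reach the $-\operatorname{div}_S(X_2)_{i:}$ form. The ingredients you flag as the main technical obstacles (smooth dependence of the transported operators, the resolvent $(\lambda\operatorname{Id}+\operatorname{BS}_S^\dagger\operatorname{BS}_S)^{-1}$, and the sign/projection relation between $Z_P$ and $\operatorname{BS}_S^\dagger$) are exactly the ones the paper handles in its Lemmas on $\Phi^\eps$, $L_\eps$ and $Q^\eps$, so the plan is correct as stated.
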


\begin{remark}
The proof of this result relies crucially on the expression of the magnetic field provided through the Biot and Savart operator $\operatorname{BS}_S$ (see Definition~\ref{def:opBS}). In general, in many shape optimization problems involving PDEs on bounded domains, PDEs are interpreted as implicit equations on the deformation variable $\theta$ and on the state variable. They are in general taken into account by applying the implicit function theorem which also provides an expression for the material (or Lagrangian) derivative of the state with respect to the deformation (see, e.g., \cite[Chapter 5]{henrotShapeVariationOptimization2018}). In the present case, dealing with the Biot and Savart operator comes to consider a PDE on an unbounded domain. The approach we have chosen here, instead, is based on the integral representation of the state variable (the magnetic field here). To establish the above result, we use suitable changes of variables that allows us to rewrite the criterion as an integral over a fixed domain and derive it as a parameterized integral with respect to $\theta$. Although the principle of this calculation is simple, its implementation is not straightforward. 
\end{remark}

\subsection{Proof of Theorem~\ref{theo:shapeDer}}

For the sake of notational simplicity, the inverse of a group element $\varphi^\eps$ will be denoted with a slight abuse of notation by $\varphi^{-\eps} := (\varphi^\eps)^{-1}$.

Let $S$ and $\theta$ be as in the statement of the theorem. Assume for now that the criterion $C$ is shape differentiable at $S$. 
We will comment on this assumption at the end of the proof. In what follows, we concentrate on the computation of the shape derivative in the direction $\theta$.

Since $C$ is shape differentiable at $S$, 
we infer that
$$
\langle dC(S),\theta\rangle=\left.\frac{d}{d\eps}C(S^\eps)\right|_{\eps=0}, \qquad \text{with }S^\eps=(\operatorname{Id}+\eps \theta)S.
$$
\paragraph{Step 1: a change of variable.}

In order to compute $C(S^\eps)$, we need to compute some kind of derivative of $\operatorname{BS}_{S^\eps}$ and its adjoint.
Nevertheless, we aim to overcome the fact that the domain of $\operatorname{BS}_{S^\eps}$ 
depends on $\eps$.

Notice that, according to the discussion in Section~\ref{sec:Had}, the mapping $\varphi^\eps=\operatorname{Id}+\eps \theta$ induces a 
bijection 
between $\mathfrak{X}(S)$ 
and $\mathfrak{X}(S^\eps)$.
Nevertheless 
$\varphi^\eps$ does not map 
$\mathscr{F}_S^0$ into $\mathscr{F}_{S^\eps}^0$. This leads us to introduce the linear mapping
\begin{equation}
    \label{def:Phi^eps}
    \begin{split}
    \Phi^\eps : \mathscr{F}_S & \longrightarrow \mathscr{F}_{S^\eps}\\
    X &\longmapsto  \frac{1}{[J(\mu_S,\mu_S^\eps)\varphi^\eps]\circ \varphi^{-\eps}} (\operatorname{Id}+\eps D\theta)X\circ \varphi^{-\eps},
\end{split}    
\end{equation}
where $J(\mu_S,\mu_S^\eps)\varphi^\eps$ denotes the Jacobian determinant\footnote{Note that  $J(\mu_S,\mu_S^\eps)\varphi^\eps$ is not the determinant of the three-dimensional mapping $(\operatorname{Id}+\eps D\theta)$
but the determinant of the restriction of this application from $T_xS$ (the tangent space of $S$ at $x$) into $T_{(\operatorname{Id}+\eps \theta)(x)}S^\eps$.} of $\varphi^\eps$ (see Appendix~\ref{appendix:cov} for further details and the explicit expression of $J(\mu_S,\mu_S^\eps)\varphi^\eps$).

The following result will be crucial in what follows since it confirms that $\Phi^\eps$ is indeed a 
diffeomorphism preserving divergence-free vector fields.

\begin{lemma}
    \label{lem:div_preserve}
For every $\eps$ small enough, $\Phi^\eps$ is a diffeomorphism from $\mathscr{F}_S^0$ to $\mathscr{F}_{S^\eps}^0$.
\end{lemma}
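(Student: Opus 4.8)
The plan is to establish three things: (i) $\Phi^\eps$ maps $\mathscr{F}_S$ to $\mathscr{F}_{S^\eps}$ and is invertible (its inverse being, essentially, the analogous map built from $\varphi^{-\eps}$), which follows immediately for $\eps$ small from the fact that $\varphi^\eps$ is a diffeomorphism of $\R^3$ and $J(\mu_S,\mu_S^\eps)\varphi^\eps$ is bounded away from $0$ and $\infty$; (ii) $\Phi^\eps$ sends tangent fields on $S$ to tangent fields on $S^\eps$ — this is clear since $(\operatorname{Id}+\eps D\theta)$ is the differential $D\varphi^\eps$, which maps $T_xS$ onto $T_{\varphi^\eps(x)}S^\eps$, and scalar prefactors do not affect tangency; (iii) the real content: $\Phi^\eps$ sends divergence-free fields to divergence-free fields. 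Once (iii) is proved for $\Phi^\eps$ in both directions (i.e. for $\eps$ and for $-\eps$, applied on $S^\eps$), the boundedness in (i) upgrades to a diffeomorphism between the closed subspaces $\mathscr{F}_S^0$ and $\mathscr{F}_{S^\eps}^0$, since $\Phi^\eps$ is linear and continuous with continuous inverse.

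For (iii) I would argue by duality, using the characterization (invoked already in the proof of Theorem~\ref{th:existence_SOP} via the Hodge decomposition): a field $Y\in\mathscr{F}_{S^\eps}$ is divergence-free iff $\int_{S^\eps}\langle Y,\nabla_{S^\eps} g\rangle\, d\mu_{S^\eps}=0$ for all $g\in\mathscr{C}^\infty(\R^3)$. The whole point of the prefactor $1/([J(\mu_S,\mu_S^\eps)\varphi^\eps]\circ\varphi^{-\eps})$ in the definition \eqref{def:Phi^eps} is that it makes the change-of-variables formula clean: for $X\in\mathfrak X(S)$ and $g$ smooth, pulling back by $\varphi^\eps$ and using the area formula (Appendix~\ref{appendix:cov}),
\[
\int_{S^\eps}\langle \Phi^\eps X,\nabla_{S^\eps}g\rangle\, d\mu_{S^\eps}
=\int_S \big\langle (\operatorname{Id}+\eps D\theta)X,\ (\nabla_{S^\eps}g)\circ\varphi^\eps\big\rangle\, d\mu_S,
\]
the Jacobian factor cancelling against its reciprocal. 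Now $(\operatorname{Id}+\eps D\theta)=D\varphi^\eps$, so $\langle D\varphi^\eps X,(\nabla_{S^\eps}g)\circ\varphi^\eps\rangle=\langle X,(D\varphi^\eps)^T(\nabla_{S^\eps}g)\circ\varphi^\eps\rangle$, and the chain rule identifies $(D\varphi^\eps)^T(\nabla_{S^\eps}g)\circ\varphi^\eps$ with $\nabla_S(g\circ\varphi^\eps)$ up to a term normal to $S$ (which $X$ annihilates, $X$ being tangent). Hence the right-hand side equals $\int_S\langle X,\nabla_S(g\circ\varphi^\eps)\rangle\, d\mu_S$, which vanishes for all $g$ precisely when $X\in\mathscr{F}_S^0$ (note $g\mapsto g\circ\varphi^\eps$ is a bijection of $\mathscr{C}^\infty$). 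This gives $\Phi^\eps(\mathscr{F}_S^0\cap\mathfrak X(S))\subset\mathscr{F}_{S^\eps}^0$, and by density and continuity $\Phi^\eps(\mathscr{F}_S^0)\subset\mathscr{F}_{S^\eps}^0$; the same computation run with $\varphi^{-\eps}$ in place of $\varphi^\eps$ shows the inverse map sends $\mathscr{F}_{S^\eps}^0$ into $\mathscr{F}_S^0$, so the inclusion is an equality.

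The main obstacle is the bookkeeping in step (iii): one must verify that the reciprocal-Jacobian normalization in \eqref{def:Phi^eps} is exactly the right weight so that the surface change-of-variables cancels it (this is the computation deferred to Appendix~\ref{appendix:cov}, and it is genuinely the crux — with any other weight the map would not preserve $\mathscr{F}^0$), and that the discrepancy between $(D\varphi^\eps)^T(\nabla_{S^\eps}g)\circ\varphi^\eps$ and the tangential gradient $\nabla_S(g\circ\varphi^\eps)$ is purely normal, so that it is killed upon pairing with the tangent field $X$. The smallness of $\eps$ is used only to guarantee that $\varphi^\eps$ is a diffeomorphism (Lemma~\ref{lem:HadamProp}) and that $J(\mu_S,\mu_S^\eps)\varphi^\eps$ stays positive and bounded, so that $\Phi^\eps$ and its inverse are bounded operators; the rest is algebraic.
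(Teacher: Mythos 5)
Your proposal is correct and follows essentially the same route as the paper: invertibility of $\Phi^\eps$ from that of $\varphi^\eps$ and positivity of the Jacobian, then the Hodge-decomposition duality characterization of divergence-free fields combined with the surface change-of-variables formula, in which the reciprocal-Jacobian weight cancels and the pullback of the gradient becomes $\nabla_S(g\circ\varphi^\eps)$ modulo a normal term annihilated by the tangent field. The only cosmetic difference is that the paper phrases the key identity in the language of differential forms ($\varphi^{\eps,*}df=d(f\circ\varphi^\eps)$), whereas you spell out the equivalent transpose-of-Jacobian computation for gradients.
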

\begin{proof}
Since $\varphi^\eps$ is an orientation preserving diffeomorphism, one has $J(\mu_S,\mu_S^\eps)\varphi^\eps>0$.
    Besides, 
    $$\Phi^{-\eps}(X)=\frac{1}{[J(\mu_S^\eps,\mu_S)\varphi^{-\eps}]\circ \varphi^\eps} D[(\operatorname{Id}+\eps \theta)^{-1}]X\circ \varphi^\eps,\qquad X\in \mathfrak{X}(S^{\eps}). 
    $$
    
    As a consequence, $\Phi^\eps$ defines a diffeomorphism from $\mathscr{F}_S$ to $\mathscr{F}_{S^\eps}$.
 We are left to prove that it preserves divergence-free vector fields.
According to the Hodge decomposition (see Appendix~\ref{append:Hodge}), it is enough to check that $ \Phi^\eps(\mathscr{F}_S)$ is orthogonal to $\{\nabla_{S^\eps} f \mid f\in \mathscr{C}^\infty(S^\eps)\}$.
Using the change of variables formula (cf.~\eqref{cdvIntegralSurf}), one has, for every $X\in \mathfrak{X}(S)$,
    \begin{align*}
        \int_{S^\eps} \langle df,\Phi^\eps(X)\rangle d\mu_{S^\eps}&=
        \int_{S^\eps} \langle df,\varphi^{\eps}_*(X)\rangle \frac{1}{[J(\mu_S,\mu_S^\eps)\varphi^\eps]\circ \varphi^{-\eps}}d\mu_{S^\eps}=\int_{S} \langle \varphi^{\eps,*}df,X\rangle d\mu_{S}\\
        &=\int_{S} \langle d(f\circ \varphi^{\eps}),X\rangle d\mu_{S}, 
    \end{align*}
    where the notation $\varphi^{\eps,*}$ stands for the conormal derivative of $\varphi^{\eps}$.
    Then $X$ is divergence-free if and only if $\Phi^\eps(X)$ is. 
The lemma is thus proved.
\end{proof}
\paragraph{Step 2: computation of the variation of $j$.}
Since we 
prefer to avoid dealing
with operators 
defined on $S^\eps$, we will use $\Phi^\eps$ to relate $\mathscr{F}^0_S$ and $\mathscr{F}_{S^\eps}^0$.

Let us first compute $(\Phi^\eps)^\dagger$. Let $j\in \mathscr{F}_S$ and $g\in \mathscr{F}_{S^\eps}$. One has
\begin{align*}
    \langle \Phi^\eps j , g \rangle &= \int_{S^\eps} \frac{1}{[J(\mu_S,\mu_S^\eps)\varphi^\eps]\circ \varphi^{-\eps}} \langle (\operatorname{Id}+\eps D\theta)j(\varphi^{-\eps}(x)), g(x) \rangle d\mu_{S^\eps}(x)\\
    &=\int_S \langle (\operatorname{Id}+\eps D\theta)j(x), g(\varphi^{\eps}(x))\rangle d\mu_S(x)\\
    &=\int_S \langle j(x), (\operatorname{Id}+\eps D\theta)^T g(\varphi^{\eps}(x))\rangle d\mu_S(x).
\end{align*}

We thus infer that $ (\Phi^\eps)^\dagger$ is given by
\begin{align*}
    (\Phi^\eps)^\dagger : \mathscr{F}_{S^\eps} & \longrightarrow \mathscr{F}_S\\
        g &\longmapsto  (\operatorname{Id}+\eps D\theta^T)g\circ \varphi^{\eps}.
\end{align*}

Let $j^\eps:=\Phi^{-\eps} (j_{S^\eps})$. According to Lemmas~\ref{lem:existence} and \ref{lem:div_preserve}, $j^\eps$ is well defined and belongs to $\mathscr{F}_S^0$.
To compute the differential of $j^\eps$, it is convenient to introduce the operators
$$
    Q^\eps:
    \begin{array}[t]{rcl}
         \mathscr{F}_S^0 &\longrightarrow &\mathscr{F}_S^0\\
        j&\longmapsto& (\Phi^\eps)^\dagger \Phi^\eps j
    \end{array} \quad \text{and}\quad
    L_\eps:
    \begin{array}[t]{rcl}
        \mathscr{F}_S^0 &\longrightarrow & L^2(P,\R^3)\\
        j &\longmapsto &\operatorname{BS}_{S^\eps} \Phi^\eps j
    \end{array}
$$
so that 
\begin{equation*}
 \forall j,k \in \mathscr{F}_S, \quad \Vert\Phi ^\eps(j)\Vert_{\mathscr{F}_ {S^\eps}}^2=\langle j, Q^\eps j \rangle_{\mathscr{F}_S} \quad\text{and}\quad  
     \langle Q^\eps j,  k \rangle_{\mathscr{F}_S}=\langle j, Q^\eps k \rangle_{\mathscr{F}_S}.
\end{equation*}

According to the optimality condition \eqref{cionOptimjS} on $j_{S^\eps}$, 
 $j^\eps$ is uniquely characterized by the identity
$$
\forall v \in \mathscr{F}_S^0, \quad     0= \langle L_\eps v, L_\eps j^\eps-B_T \rangle_{L^2(P,\R^3)}+ \lambda \langle v , Q^\eps j^\eps \rangle_{\mathscr{F}_S^0}
$$
which also rewrites
$$
\forall v \in \mathscr{F}_S^0, \quad 0= \langle v , \lambda Q^\eps j^\eps+ L_\eps^\dagger(L_\eps j^\eps-B_T) \rangle_{\mathscr{F}_S^0}.
$$
It follows that
\begin{align}
    \label{eq:j_eps}
    j^\eps=(\lambda Q^\eps+L_\eps^\dagger L_\eps)^{-1}L_\eps^\dagger B_T.
\end{align}
Let us now compute the first order variation of $j^\eps$. To this aim, we use the  expansion 
\begin{equation}\label{expand:Jacobien}
J(\mu_S,\mu_S^\eps)\varphi^\eps=1+\eps \dive_S \theta+\operatorname{o}(\eps)
\end{equation}
obtained in
\cite[Lemma~5.4.15]{henrotShapeVariationOptimization2018}. Recall that the notation $\dive_S \theta$ stands for the tangential divergence
of $\theta$ on $S$.
\begin{lemma}
\label{lem:operator_der}
Let $S$ and $\theta$ be chosen as above. Then, one has 
\begin{align*}
L_\eps&= \operatorname{BS}_S+ \left.\frac{dL_\eps}{d\eps} \right|_{\eps=0} \eps+\operatorname{o}(\eps)\text{ in } \mathcal{L}(\mathscr{F}_S^0,L^2(P,\R^3)), \\
Q^\eps &= I+ \left.\frac{dQ^\eps}{d\eps} \right|_{\eps=0}\eps+\operatorname{o}(\eps)\text{ in }\mathcal{L}(\mathscr{F}_S^0),
\end{align*}
where, for every $j \in \mathscr{F}_S^0$ and $y\in P$, 
\begin{align}
    \left(\left.\frac{dL_\eps}{d\eps} \right|_{\eps=0}j\right)(y)&= \int_S \left( K(x,y)\times(D\theta(x) j(x)) + (D_x K(x,y)\theta(x))\times j(x) \right) d\mu_S(x),\nonumber\\
    \left.\frac{dQ^\eps}{d\eps}\right|_{\eps=0}&=  D\theta+D\theta^T - \dive_S \theta \operatorname{Id}=e(\theta)- \dive_S \theta \operatorname{Id},\label{eq:**}
\end{align}
and $e(\theta)$ is defined as in \eqref{eq:*}. 
\end{lemma}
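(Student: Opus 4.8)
The plan is to pull everything back to the fixed surface $S$ through the diffeomorphism $\varphi^\eps=\operatorname{Id}+\eps\theta$. Once this is done, the Jacobian determinant $J(\mu_S,\mu_S^\eps)\varphi^\eps$ that is built into the definition of $\Phi^\eps$ cancels against the one produced by the surface change of variables, and both $L_\eps j$ and $Q^\eps j$ become explicit expressions in which $\eps$ enters only through $\varphi^\eps$ and through $J(\mu_S,\mu_S^\eps)\varphi^\eps$; for these, first-order expansions are already available, namely \eqref{eq:der_K} for the kernel $K$ and \eqref{expand:Jacobien} for the tangential Jacobian. The remaining work then reduces to Taylor expanding pointwise and to checking that the remainders are $\operatorname{o}(\eps)$ \emph{uniformly}, so that the expansions hold in operator norm, which is what the notations $\left.\frac{dL_\eps}{d\eps}\right|_{\eps=0}$ and $\left.\frac{dQ^\eps}{d\eps}\right|_{\eps=0}$ presuppose.

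For $L_\eps$, I would start from $\operatorname{BS}_{S^\eps}(\Phi^\eps j)(y)=\int_{S^\eps}K(x,y)\times\Phi^\eps(j)(x)\,d\mu_{S^\eps}(x)$ and perform the change of variables induced by $\varphi^\eps$ (cf.~\eqref{cdvIntegralSurf}); using the identity $\Phi^\eps(j)\big(\varphi^\eps(x)\big)\,[J(\mu_S,\mu_S^\eps)\varphi^\eps](x)=(\operatorname{Id}+\eps D\theta(x))j(x)$, this gives
\[L_\eps(j)(y)=\int_S K\big(\varphi^\eps(x),y\big)\times\big((\operatorname{Id}+\eps D\theta(x))j(x)\big)\,d\mu_S(x).\]
Since $d(S,P)\ge\delta$ by \eqref{H1}, for $\eps$ small enough $S^\eps$ stays at distance at least $\delta/2$ from $P$, so the map $(\eps,x,y)\mapsto K(x+\eps\theta(x),y)$ is smooth with all its derivatives bounded on the relevant compact region; Taylor's formula together with \eqref{eq:der_K} then yields $K(\varphi^\eps(x),y)=K(x,y)+\eps\,D_xK(x,y)\theta(x)+r^\eps(x,y)$ with $\|r^\eps\|_{L^\infty(S\times P)}=\operatorname{o}(\eps)$. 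Substituting and collecting powers of $\eps$ would produce the zeroth-order term $\operatorname{BS}_S(j)(y)$ and, as coefficient of $\eps$, exactly the expression claimed for $\left.\frac{dL_\eps}{d\eps}\right|_{\eps=0}j$; the leftover is an integral over $S$ of $j$ against a factor of size $\operatorname{o}(\eps)$ in $L^\infty(S\times P)$, hence $\operatorname{o}(\eps)$ uniformly in $y\in P$ and in $\|j\|_{\mathscr{F}_S^0}\le 1$ by Cauchy--Schwarz, which is the asserted expansion in $\mathcal{L}(\mathscr{F}_S^0,L^2(P,\R^3))$.

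For $Q^\eps$, using the formula for $(\Phi^\eps)^\dagger$ established just above, I would write, for $j\in\mathscr{F}_S$,
\[(\Phi^\eps)^\dagger\Phi^\eps(j)(x)=\frac{1}{[J(\mu_S,\mu_S^\eps)\varphi^\eps](x)}\,(\operatorname{Id}+\eps D\theta(x))^T(\operatorname{Id}+\eps D\theta(x))\,j(x),\]
so that, on $\mathscr{F}_S^0$, $Q^\eps$ is multiplication by the matrix field $M^\eps(x)=[J(\mu_S,\mu_S^\eps)\varphi^\eps](x)^{-1}(\operatorname{Id}+\eps D\theta(x))^T(\operatorname{Id}+\eps D\theta(x))$ followed by the orthogonal projection onto $\mathscr{F}_S^0$. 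From \eqref{expand:Jacobien}, $[J(\mu_S,\mu_S^\eps)\varphi^\eps]^{-1}=1-\eps\operatorname{div}_S\theta+\operatorname{o}(\eps)$ in $L^\infty(S)$, while $(\operatorname{Id}+\eps D\theta)^T(\operatorname{Id}+\eps D\theta)=\operatorname{Id}+\eps(D\theta+D\theta^T)+\operatorname{o}(\eps)=\operatorname{Id}+\eps\,e(\theta)+\operatorname{o}(\eps)$ with $e(\theta)$ as in \eqref{eq:*}; multiplying, $M^\eps=\operatorname{Id}+\eps\big(e(\theta)-\operatorname{div}_S\theta\,\operatorname{Id}\big)+\operatorname{o}(\eps)$ in $L^\infty(S)$. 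Since multiplication by an $L^\infty$ matrix field has operator norm on $\mathscr{F}_S^0$ at most the $L^\infty$ norm of its symbol, and the Hodge projection is a contraction, this gives $Q^\eps=I+\eps\big(e(\theta)-\operatorname{div}_S\theta\,\operatorname{Id}\big)+\operatorname{o}(\eps)$ in $\mathcal{L}(\mathscr{F}_S^0)$, the derivative being understood as multiplication by $e(\theta)-\operatorname{div}_S\theta\,\operatorname{Id}$ post-composed with the projection onto $\mathscr{F}_S^0$.

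The main obstacle is precisely this passage from pointwise to operator-norm control, i.e.\ the uniformity of the Taylor remainders. For $L_\eps$ it requires a second-order Taylor estimate for $\eps\mapsto K(x+\eps\theta(x),y)$ that is uniform in $(x,y)\in S\times P$ and in small $\eps$; this rests on the uniform separation of $S^\eps$ from $P$ for $\eps$ small (a consequence of \eqref{H1}) and on $\theta\in W^{1,\infty}$, after which Cauchy--Schwarz finishes the job. For $Q^\eps$ the delicate input is the validity of \eqref{expand:Jacobien} in $L^\infty(S)$ with an $\operatorname{o}(\eps)$ remainder: this is where the $\mathscr{C}^{1,1}$ regularity of $S$ and the assumption $\theta\in W^{2,\infty}$ are used, and I would import it from \cite[Lemma~5.4.15]{henrotShapeVariationOptimization2018}; once granted, the estimate on $Q^\eps$ is immediate since $Q^\eps$ is essentially a multiplication operator. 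A minor bookkeeping point worth flagging is that neither $Q^\eps$ nor $\left.\frac{dQ^\eps}{d\eps}\right|_{\eps=0}$ is literally a pointwise multiplication operator on $\mathscr{F}_S^0$ -- one must compose with the orthogonal (Leray--Hodge) projection onto $\mathscr{F}_S^0$ -- but, this projection being norm nonincreasing, it has no effect on any of the estimates above.
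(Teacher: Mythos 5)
Your proof is correct and follows essentially the same route as the paper: pull both operators back to the fixed surface $S$ via the change of variables \eqref{cdvIntegralSurf}, expand $K(\varphi^\eps(x),y)$ using \eqref{eq:der_K} and the Jacobian using \eqref{expand:Jacobien}, and identify the first-order terms. Your extra care about the uniformity of the Taylor remainders (via \eqref{H1} and Cauchy--Schwarz) and about composing with the orthogonal projection onto $\mathscr{F}_S^0$ makes explicit some points the paper leaves implicit, but does not change the argument.
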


\begin{proof}[Proof of Lemma~\ref{lem:operator_der}]
    Let us start with $L_\eps$.
    Given $j \in \mathscr{F}_S^0$  and $y\in P$, we have
    \begin{align}
        L_\eps (j)(y)&=\int_{S^\eps} \frac{1}{[J(\mu_S,\mu_S^\eps)\varphi^\eps]\circ \varphi^{-\eps}}K(x,y)\times[(\operatorname{Id}+\eps D\theta)j(\varphi^{-\eps}(x))] d\mu_{S^\eps}(x)\nonumber\\
        &= \int_{S} K(\varphi^\eps(x),y)\times[(\operatorname{Id}+\eps D\theta)j(x)] d\mu_{S}(x)\nonumber\\
        &= \operatorname{BS}_S(j)(y) + \eps \int_S \left( K(x,y)\times (D\theta(x) j(x)) + [D_x K(x,y) \theta(x)]\times j(x) \right) d\mu_S(x) +\operatorname{o}(\eps).\label{eqintermLeps}
    \end{align}
Moreover, it can be easily checked that the reminder term of this expansion 
grows at most linearly with respect to $\Vert j\Vert_{\mathscr{F}_S}$. 
Regarding $Q^\eps$, a similar reasoning using \eqref{expand:Jacobien} yields 
    \begin{eqnarray*}
        Q^\eps&=&\frac{1}{[J(\mu_S,\mu_S^\eps)\varphi^\eps]\circ \varphi^{-\eps}} (\operatorname{Id}+\eps D\theta^T)(\operatorname{Id}+\eps D\theta)\\
        &=&\operatorname{Id}+\eps(D\theta+D\theta^T - \dive_S \theta \operatorname{Id}) +\operatorname{o}(\eps),
    \end{eqnarray*}
    concluding the proof.
\end{proof}

Combining all the results above, we now compute the sensitivity of $j^\eps$ with respect to $\eps$.   The following result is an immediate consequence of Lemma~\ref{lem:operator_der} and \eqref{eq:j_eps}.
\begin{proposition}\label{prop:jeps}
One has  $j^\eps=j_S+\left.\frac{dj^\eps}{d\eps}\right|_{\eps=0}\eps+\operatorname{o}(\eps)$
    with
    \begin{eqnarray*}
  \left.\frac{dj^\eps}{d\eps}\right|_{\eps=0} &=&
       \left(\lambda \operatorname{Id}+\operatorname{BS}_S^\dagger \operatorname{BS}_S \right) ^{-1} \left.\frac{d L_{\eps}^\dagger}{d\eps} \right|_{\eps=0}B_T \\
    && - \left(\lambda \operatorname{Id}+\operatorname{BS}_S^\dagger \operatorname{BS}_S\right) ^{-1}
    \left( \lambda \left.\frac{dQ^\eps}{d\eps}\right|_{\eps=0}+\left.\frac{d L_{\eps}^\dagger}{d\eps}\right|_{\eps=0}\operatorname{BS}_S +\operatorname{BS}_S^\dagger \left.\frac{d L_{\eps}}{d\eps}\right|_{\eps=0}\right)
    \left( \lambda \operatorname{Id}+\operatorname{BS}_S^\dagger \operatorname{BS}_S \right)^{-1} \operatorname{BS}_S^\dagger B_T .
    \end{eqnarray*}
\end{proposition}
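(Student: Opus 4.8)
The plan is to read Proposition~\ref{prop:jeps} off directly from the closed form~\eqref{eq:j_eps} for $j^\eps$ together with the first-order expansions of $L_\eps$ and $Q^\eps$ from Lemma~\ref{lem:operator_der}, by a routine resolvent (Neumann series) argument. Introduce the shorthands $A=\lambda\operatorname{Id}+\operatorname{BS}_S^\dagger\operatorname{BS}_S\in\mathcal{L}(\mathscr{F}_S^0)$ and $M_\eps=\lambda Q^\eps+L_\eps^\dagger L_\eps\in\mathcal{L}(\mathscr{F}_S^0)$, so that~\eqref{eq:j_eps} reads $j^\eps=M_\eps^{-1}L_\eps^\dagger B_T$, while $M_0=A$ and, by Lemma~\ref{lem:existence}, $j^0=A^{-1}\operatorname{BS}_S^\dagger B_T=j_S$. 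The goal is then simply to differentiate the product $\eps\mapsto M_\eps^{-1}\cdot(L_\eps^\dagger B_T)$ at $\eps=0$.

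First I would record that the expansions in Lemma~\ref{lem:operator_der} hold in the respective operator-norm topologies, $\mathcal{L}(\mathscr{F}_S^0,L^2(P,\R^3))$ for $L_\eps$ and $\mathcal{L}(\mathscr{F}_S^0)$ for $Q^\eps$; hence $\eps\mapsto L_\eps$ and $\eps\mapsto Q^\eps$ are differentiable at $\eps=0$, with derivatives $\dot L:=\frac{dL_\eps}{d\eps}\big|_{\eps=0}$ and $\dot Q:=\frac{dQ^\eps}{d\eps}\big|_{\eps=0}$. Since $T\mapsto T^\dagger$ is a linear isometry between the relevant spaces of operators, $\eps\mapsto L_\eps^\dagger$ is differentiable at $0$ with derivative $\dot L^\dagger=\frac{dL_\eps^\dagger}{d\eps}\big|_{\eps=0}$; and since composition of operators is bilinear and continuous, the product rule gives that $\eps\mapsto M_\eps$ is differentiable at $0$ with
\[\dot M_0:=\left.\tfrac{dM_\eps}{d\eps}\right|_{\eps=0}=\lambda\dot Q+\dot L^\dagger\operatorname{BS}_S+\operatorname{BS}_S^\dagger\dot L.\]

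Next, $M_0=A$ is invertible by Lemma~\ref{lem:existence}, and inversion is smooth on the open set of invertible operators with differential $H\mapsto-A^{-1}HA^{-1}$; as $\|M_\eps-A\|\to0$ when $\eps\to0$, the operator $M_\eps$ is invertible for $\eps$ small and $\eps\mapsto M_\eps^{-1}$ is differentiable at $0$ with derivative $-A^{-1}\dot M_0A^{-1}$. Since $j^\eps=M_\eps^{-1}(L_\eps^\dagger B_T)$ is the pointwise product of a differentiable operator-valued map and a differentiable $\mathscr{F}_S^0$-valued map, a last application of the product rule yields $j^\eps=j_S+\eps\,\frac{dj^\eps}{d\eps}\big|_{\eps=0}+\operatorname{o}(\eps)$ with
\[\left.\tfrac{dj^\eps}{d\eps}\right|_{\eps=0}=A^{-1}\dot L^\dagger B_T-A^{-1}\dot M_0\,A^{-1}\operatorname{BS}_S^\dagger B_T,\]
and substituting the expression for $\dot M_0$ gives exactly the formula in the statement.

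There is no substantial obstacle here: the one point deserving care is that the remainders in Lemma~\ref{lem:operator_der} are genuinely $\operatorname{o}(\eps)$ in operator norm (this is precisely what the remark after~\eqref{eqintermLeps}, about the remainder growing at most linearly in $\|j\|_{\mathscr{F}_S}$, guarantees), so that the Neumann series for $M_\eps^{-1}$ converges and the termwise differentiation above is legitimate. Everything else is bookkeeping.
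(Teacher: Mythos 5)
Your proposal is correct and follows exactly the route the paper intends: the paper simply declares Proposition~\ref{prop:jeps} to be ``an immediate consequence of Lemma~\ref{lem:operator_der} and \eqref{eq:j_eps}'', and the computation it has in mind is precisely your differentiation of $\eps\mapsto M_\eps^{-1}L_\eps^\dagger B_T$ at $\eps=0$ via the product rule and the differential $H\mapsto -A^{-1}HA^{-1}$ of operator inversion. Your added remark about the remainders being $\operatorname{o}(\eps)$ in operator norm (uniformly in $\|j\|_{\mathscr{F}_S}$) is exactly the point that legitimizes the paper's terse claim.
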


\paragraph{Step 3: computation of the cost functional derivative.}
Recall that
\begin{eqnarray}
C(S^\eps)&=&\|\operatorname{BS}_{S^\eps} j_{S^\eps} -B_T \|^2_{L^2(P,\R^3)} + \lambda \|j_{S^\eps}\|^2_{\mathscr{F}_{S^\eps}}\nonumber \\
&=& \|L_{\eps} j^{\eps} -B_T \|^2_{L^2(P,\R^3)} + \lambda \langle j^\eps, Q^\eps j^\eps \rangle_{\mathscr{F}_S}\label{workExprCSeps}.
\end{eqnarray}
By differentiating this expression and according to Proposition~\ref{prop:jeps}, we get 
\begin{eqnarray*}
   \left. \frac{dC(S^\eps)}{d\eps}\right|_{\eps=0}&=&\lambda \left( \left\langle j_S, \left.\frac{dQ^\eps}{d\eps}\right|_{\eps=0} j_S \right\rangle_{\mathscr{F}_S}+ 2\left\langle j_S, \left.\frac{dj^\eps}{d\eps}\right|_{\eps=0}\right\rangle_{\mathscr{F}_S} \right)\\
   && +2\left\langle \operatorname{BS}_S j_S- B_T , \left.\frac{dL_\eps}{d\eps}\right|_{\eps=0}j_S +\operatorname{BS}_S \left.\frac{dj^\eps}{d\eps}\right|_{\eps=0}\right\rangle_{L^2(P,\R^3)}.
\end{eqnarray*}
Note that
\begin{align*}
2 \left\langle (\lambda \operatorname{Id}+ \operatorname{BS}_S^\dag \operatorname{BS}_S) j_S- \operatorname{BS}_S^\dag B_T
, \left.\frac{dj^\eps}{d\eps}\right|_{\eps=0}\right\rangle_{\mathscr{F}_S}=0. 
\end{align*}
Thus 
\begin{equation}
    \label{eq:Ii}
    \left. \frac{dC(S^\eps)}{d\eps}\right|_{\eps=0}= \lambda \left\langle j_S, \left.\frac{dQ^\eps}{d\eps}\right|_{\eps=0} j_S \right\rangle_{\mathscr{F}_S}+2\left\langle \operatorname{BS}_S j_S- B_T , \left.\frac{dL_\eps}{d\eps}\right|_{\eps=0}j_S \right\rangle_{L^2(P,\R^3)}.
\end{equation}
\begin{remark}
The previous expression can be understood as follows: 
writing $C(S)=:\tilde C(S,j_S)$ with the natural choice of $\tilde C$ 
and assuming that $(C,j)\mapsto \tilde C$ and $S\mapsto j_S$ are sufficiently regular, one has
    $$
    \frac{\partial \tilde{C}(S,j_S)}{\partial S}= \frac{\partial \tilde C}{\partial S}(S,j_S)+ \frac{\partial \tilde C}{\partial j}\frac{\partial j_S}{\partial S}(S,j_S).
    $$
    Using the fact that $\frac{\partial \tilde C}{\partial j}(j_S)=0$ since $j_S$ is the minimizer of $j \mapsto \tilde{C}(S,j)$, we get 
    $$
    \frac{\partial \tilde{C}(S,j_S)}{\partial S}= \frac{\partial \tilde C}{\partial S}(S,j_S).
    $$
\end{remark}

In what follows, we will use the identity stated in the following lemma.
\begin{lemma}\label{lem:identitdLepsdeps}
Let $j\in \mathcal{F}_S^0$, $k\in L^2(P,\R^3)$, and $\theta$ be as in the statement of Theorem~\ref{theo:shapeDer}. Then 
$$
\left\langle \left.\frac{dL_\eps}{d\eps}\right|_{\eps=0}j,k\right\rangle_{L^2(P,\R^3)}=-\left\langle D\theta j,Z_P(k)\right\rangle_{\mathcal{F}_S}-\left\langle \theta,\widehat{Z}_P(k,j)\right\rangle_{\mathcal{F}_S}.
$$
\end{lemma}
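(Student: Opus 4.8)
The plan is to start from the explicit expression for $\left.\frac{dL_\eps}{d\eps}\right|_{\eps=0}$ provided by Lemma~\ref{lem:operator_der}, plug it into the $L^2(P,\R^3)$ inner product against $k$, and then integrate by parts / rearrange each of the two resulting terms to recognize the operators $Z_P$ and $\widehat{Z}_P$. Concretely, by Lemma~\ref{lem:operator_der},
\begin{equation*}
\left\langle \left.\frac{dL_\eps}{d\eps}\right|_{\eps=0}j,k\right\rangle_{L^2(P,\R^3)} = \int_P \left\langle \int_S \big( K(x,y)\times(D\theta(x) j(x)) + (D_x K(x,y)\theta(x))\times j(x) \big) d\mu_S(x), k(y)\right\rangle d\mu_P(y).
\end{equation*}
First I would apply Fubini's theorem, which is legitimate since the kernel $K$ and its $x$-derivative are uniformly bounded on $S\times P$ by \eqref{H1} (see the Remark after Definition~\ref{def:opBS} and \eqref{eq:der_K}), and $j\in\mathscr{F}_S^0$, $k\in L^2(P,\R^3)$, $\theta\in W^{2,\infty}$. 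This lets me exchange the order of integration and treat the two summands separately.

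For the first summand, I would use the elementary identity $\langle a\times b, c\rangle = \langle b, c\times a\rangle$ (a cyclic permutation of the scalar triple product) to rewrite $\langle K(x,y)\times(D\theta(x)j(x)), k(y)\rangle = \langle D\theta(x) j(x), k(y)\times K(x,y)\rangle = -\langle D\theta(x) j(x), K(x,y)\times k(y)\rangle$. Integrating in $y$ over $P$ and recognizing $\int_P K(\cdot,y)\times k(y)\,d\mu_P(y) = Z_P(k)$ then yields exactly $-\langle D\theta j, Z_P(k)\rangle_{\mathscr{F}_S}$ after integrating in $x$ over $S$. For the second summand, I would similarly permute the triple product, $\langle (D_x K(x,y)\theta(x))\times j(x), k(y)\rangle = \langle D_x K(x,y)\theta(x), j(x)\times k(y)\rangle = \langle \theta(x), (D_x K(x,y))^T (j(x)\times k(y))\rangle$; using the antisymmetry $j(x)\times k(y) = -k(y)\times j(x)$ and the fact that $D_x K(x,y) = D_x\!\left(\frac{x-y}{|x-y|^3}\right)$, integrating in $y$ over $P$ produces $-\widehat{Z}_P(k,j)(x)$, and integrating in $x$ over $S$ gives $-\langle \theta, \widehat{Z}_P(k,j)\rangle_{\mathscr{F}_S}$. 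Summing the two contributions gives the claimed identity.

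The computation is essentially bookkeeping with the scalar triple product and one application of Fubini, so there is no deep obstacle; the main point requiring a little care is getting the signs and the transpose right in the second term --- in particular checking that the sign conventions in the definition of $\widehat{Z}_P$ (which has $k(y)\times j(x)$, not $j(x)\times k(y)$) combine with the permutation $\langle u\times v, w\rangle = \langle u, v\times w\rangle$ to produce the stated minus sign --- and making sure the transpose $(D_x K)^T$ appears on the correct factor when moving $D_x K(x,y)\theta(x)$ out of the inner product with $\theta$. A secondary point is to note explicitly that $Z_P(k)\in\mathscr{F}_S$ and $\widehat{Z}_P(k,j)\in\mathscr{F}_S$ so that the inner products $\langle\cdot,\cdot\rangle_{\mathscr{F}_S}$ on the right-hand side make sense; this follows from the smoothness and uniform boundedness of $K(\cdot,y)$ and $D_x K(\cdot,y)$ on $S$ for $y\in P$ together with the finiteness of $\mathscr{H}^2(S)$, exactly as in the mapping properties asserted in the statement of Theorem~\ref{theo:shapeDer}.
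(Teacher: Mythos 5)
Your proof is correct and follows exactly the route the paper indicates: the paper's own proof is a one-line remark that the identity "follows from straightforward computations, by combining the Fubini theorem with standard properties of the scalar triple product," and your write-up simply carries out those computations (with the signs and the transpose handled correctly). Nothing further to add.
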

\begin{proof}
The proof follows from straightforward computations, by combining the Fubini theorem with standard properties of the scalar triple product\footnote{Recall that 
 the scalar triple product  of three vectors $a,b,c\in \R^3$ is given by $\langle{a} ,({b} \times {c} )\rangle$ and coincides with the (signed) volume of the parallelepiped defined by the three vectors. Therefore, the scalar triple product is preserved by a circular shift of the triple $(a,b,c)$.}.
\end{proof}

By combining \eqref{eq:**}, \eqref{eq:Ii}, and Lemma~\ref{lem:identitdLepsdeps}, one computes
\begin{align*}
    \left. \frac{dC(S^\eps)}{d\eps}\right|_{\eps=0}&  ={}&  \lambda \left\langle j_S, e(\theta) j_S-\dive_S \theta j_S \right\rangle_{\mathscr{F}_S}
-2\left\langle D\theta j_S ,Z_P (\operatorname{BS}_S j_S- B_T) \right\rangle_{\mathscr{F}_S}
 -2\left\langle \theta , \widehat{Z}_P(\operatorname{BS}_S j_S- B_T,j_S)\right\rangle_{\mathscr{F}_S}.
\end{align*}
    
To conclude this computation, observe that for all vectors $u$ and $v$ in $\R^3$,
$$
\langle D\theta u, v\rangle =\sum_{i,j=1}^3(D\theta)_{ij}u_jv_i=D\theta :(uv^T), \quad  \langle (D\theta)^T u, v\rangle=D\theta:(vu^T), \quad \langle e(\theta)u, v\rangle=D\theta :(uv^T+vu^T)
$$ 
so that 
$$
\dive_S\theta=\sum_{i=1}^3\partial_{x_i}\theta_i-D\theta :(\nu\nu^T)=D\theta:(I_3-\nu\nu^T).
$$
We thus obtain
$$
\left. \frac{dC(S^\eps)}{d\eps}\right|_{\eps=0}=\int_S (\langle \theta , X_1\rangle  +D \theta : X_2 )  \, d\mu_S,
$$
where $X_1$ and $X_2$ have been introduced in the statement of the theorem.

Now, according to \cite[Prop.~5.4.9]{henrotShapeVariationOptimization2018}, the shape differential $\langle dC(S),\theta\rangle$ above can be recast as
\begin{align*}
\langle dC(S),\theta\rangle &=  \int_S \langle \theta , X_1\rangle  +	\sum_{i=1}^3\int_S\frac{\partial \theta_i}{\partial \nu}\langle  (X_2)_{i:}, \nu\rangle    \, d\mu_S+	\sum_{i=1}^3\int_S \langle\nabla_S \theta_i, [(X_2)_{i:}]_S\rangle   \, d\mu_S\\
&=  \int_S \langle\theta , X_1\rangle  +	\sum_{i=1}^3\int_S \frac{\partial \theta_i}{\partial \nu}\langle (X_2)_{i:}, \nu \rangle   \, d\mu_S+	\sum_{i=1}^3\int_S \theta_i \left(-\operatorname{div}_S[(X_2)_{i:}]_S+\langle\kappa[(X_2)_{i:}]_S, \nu\rangle \right)   \, d\mu_S,
\end{align*}
where, for $i\in \{1,2,3\}$, $(X_2)_{i:}$ denotes the $i$-th line of $X_2$ seen as a column vector, $[(X_2)_{i:}]_S$ is the tangential part of $(X_2)_{i:}$ defined as $[(X_2)_{i:}]_S=(X_2)_{i:}-\langle[(X_2)_{i:}]_S, \nu\rangle $, and $\kappa$ denotes the mean curvature\footnote{The mean curvature  $\kappa$ of a surface is defined here as the sum of the principal curvatures of $S$.} on $S$. The expected formula is obtained by noting that each line of $X_2$ is tangential (in other words, normal to $\nu$). Indeed, this follows from the definitions of the mapping $Z_P$, the function $j_S$, and the fact that $I_3-\nu\nu^T$ corresponds to the matrix of the orthogonal projection onto $S$.

To conclude this proof, it remains to investigate the shape differentiability of $S\mapsto C(S)$. 
Let us introduce $\tau_\theta=\operatorname{Id}+\theta$ where $\theta$ is chosen as in the statement of the theorem. It is straightforward to show that the real number $C(\tau_\theta(S))$ can be written as a 
smooth function of integrals written on the fixed domain $S$, for which the integrand depends regularly on $\theta$.
Indeed, this can be straightforwardly obtained by replacing $\operatorname{Id}+\eps\theta$ by $\operatorname{Id}+\theta$ in the reasoning above, and mimicking the associated computations leading to \eqref{eq:j_eps}, \eqref{eqintermLeps} and \eqref{workExprCSeps}. 
This yields to the expansion
$$
C(\tau_\theta(S))=C(S)+   \left. \frac{dC(S^\eps)}{d\eps}\right|_{\eps=0}+\operatorname{o}(\Vert \theta\Vert_{W^{2,\infty}(\R^d,\R^d)}),
$$
with $S^\eps=(\operatorname{Id}+\eps \theta)S$ and the shape differentiability of $C$ hence follows.

\section{Numerical implementation}\label{sec:num}
The results obtained above are intrinsic in the sense that they do not depend on the parametrization of the objects (surfaces, magnetic field, electric current, \ldots).
There are several ways to represent them numerically. We have chosen to use what is, to the best of the authors' knowledge, the classical approach in the stellarator community.
In particular:
\begin{itemize}
    \item Surfaces, vector fields and magnetic fields are represented by Fourier coefficients. We detail the parametrization in Section \ref{subsection:repr}.
    \item Stellarator symmetry is imposed on all the objects. We refer to \cite[Section 12.3]{imbert-gerardIntroductionSymmetriesStellarators2019} and \cite{dewarStellaratorSymmetry1998} for details and justifications of the stellarator symmetry.
    \item As mentioned in Remark~\ref{rmk:LS_simplifications}, $\operatorname{BS}_S$ is slightly modified. Not only the optimization space  $\mathscr{F}_S^0$ is replaced by a suitable affine subspace of it, but also we restrict the image of $\operatorname{BS}_S$ to the plasma boundary. We provide further details in \Cref{subsec:current-sheet_representation,subsec:Magnetic_field_representation} and \Cref{subsec:poisson_torus}.
\end{itemize}

\subsection{Parametrization issues}
\label{subsection:repr}
\subsubsection{Surface representation}

We represent a toroidal surfaces $S$ as the image of the two-dimensional flat torus $T=(\R/\Z)^2$ by an
embedding
\begin{align*}
    \psi :\quad T &\to \R^3\\
    (u,v) &\mapsto \psi(u,v).
\end{align*}

Stellarators often exhibits a discrete symmetry by rotation. For example W7X is invariant by the rotation of angle $2\pi/5$ along the vertical axis and NCSX has an invariance by the rotation of angle $2\pi/3$.
To reduce the complexity, we 
 only represent one module of the surface and we denote by $N_p$ the number of modules needed to generate the entire surface (using rotations of angle $2 \pi /N_p$).
We introduce the cylindrical coordinates  $(R,\varphi,Z)$.
We will make the assumption of no toroidal folding, i.e., that the intersection of each half plane $ \{\varphi=constant \}$ with $S$ is a single loop.
We express $\psi$ in cylindrical coordinates $(R(u,v),\frac{2\pi v}{N_p},Z(u,v))$ as 
\[ 
    \begin{pmatrix}
        x\\y\\z
    \end{pmatrix}
    =
    \begin{pmatrix}
        R(u,v) \cos(\frac{2\pi v}{N_p})\\
        R(u,v) \sin(\frac{2\pi v}{N_p})\\
        Z(u,v)
    \end{pmatrix},\qquad (u,v)\in T.
\]
Then we develop $R$ and $Z$ in Fourier components and we impose the stellarator-symmetry
\begin{align}
    R(u,v)&=\sum_{m \geq 0} \sum_{n\in \Z} R_{m,n}\cos(2 \pi (m u+n v)) ,\label{eq:R_in_fourier}\\
    Z(u,v)&=\sum_{m \geq 0} \sum_{n\in \Z} Z_{m,n}\sin(2 \pi (m u+n v)) .
    \label{eq:Z_in_fourier}
\end{align}

Note the absence of $\sin$ terms for $R$ and $\cos$ terms for $Z$.
For the numerical simulation, we truncate the number of Fourier components in \eqref{eq:R_in_fourier} and \eqref{eq:Z_in_fourier}.

\begin{remark}
    The cost considered in this paper only depends on the surface (and is independent of its parametrization $\psi$).
On the toroidal direction, we have already imposed that $\varphi = 2 \pi \ms{v}/N_p$.
On the other hand, we can compose $\psi$ with any diffeomorphism $f_v:\R/\Z \to \R/\Z$ on the poloidal direction $u$.
Namely, $\psi(f_v(u),v)$ and $\psi(u,v)$ have the same image for a fixed $v$.
Thus our problem is invariant under the action of a smooth family of 
diffeomorphisms.
This extra degree of freedom has two consequences:
    \begin{itemize}
        \item If we use a regular discretization for the surface $(\psi(\frac{i}{n_u},\frac{j}{n_v}))_{i,j}$ of size $n_u\times n_v$, we need $|\partial_u \psi|$ and $|\partial_v \psi|$ to be as regular as possible.
        \item As we take a finite number of harmonics, we would like to ``compress" as much as possible the information on the shape by using low harmonics.
    \end{itemize}
This problem has been study in the plasma community in \cite{hirshman_explicit_1998} and gave rise to the notion of spectrally optimized Fourier series.
Nevertheless, we would like to highlight that this approach is extrinsic (it depends on the parametrization) and should not be used for other purposes than fixing the gauge invariance. We have not implemented it since our numerical results empirically already provided a reasonable regularity on the poloidal parametrization.
\end{remark}

\subsubsection{Magnetic field representation}
\label{subsec:Magnetic_field_representation}
In the previous sections, we represented the target magnetic field as a three-dimensional vector field in the plasma domain.
Nonetheless, thanks to the structure of Maxwell's equations, the magnetic field inside $P$ is nearly entirely determined by its normal component along $\partial P$.
It is thus possible to work with a scalar quantity on a surface instead of a three-dimensional vector field on a volume.
Indeed, let us introduce the line integral (also called circulation) of the magnetic field along one toroidal turn. By Stoke's theorem (also known as Ampere circuital law in electromagnetism), this is equal to the total flux of the electric current across any surface enclosed by the above-mentioned toroidal loop. This quantity is called the \emph{total poloidal current} and is denoted by $I_p$.

As proved in Appendix~\ref{subsec:poisson_torus}, $I_p$ and the normal component of the magnetic field across $\partial P$ characterize completely the magnetic field inside the plasma.

As a consequence, it is reasonable to minimize
\[
    \chi^2_B(j)=\int_{\partial P} \langle (\operatorname{BS}_S j- B_T), \nu  \rangle^2 d\mu_{\partial P}
\]
with the total poloidal current of $j$ fixed, where $\nu$ denotes the outward normal unit vector to $\partial P$.

This idea has been used by physicists for a long time, for example \cite{merkelSolutionStellaratorBoundary1987,landremanImprovedCurrentPotential2017}. Besides, if we consider two currents distribution $j$ and $\tilde{\jmath}$ on two toroidal surfaces $S$ and $\tilde S$ outside of $P$ with the same total poloidal 
currents, the induced magnetic field in $P$ satisfies
\[
    \|\operatorname{BS}_S j- \operatorname{BS}_{\tilde S} \tilde{\jmath}\|_{L^2(P,\R^3)} \lesssim \int_{\partial P} \langle (\operatorname{BS}_S j-\operatorname{BS}_{\tilde S} \tilde{\jmath}), \nu  \rangle^2 d\mu_{\partial P}.
\]
We provide mathematical proofs of these facts in Appendix \ref{subsec:poisson_torus}.

We also use a normal target magnetic field that respects the stellarator symmetry, that is,
\[\langle B_T, \nu \rangle(\psi(u,v))=\sum_{m \geq 0} \sum_{n\in \Z} B_{m,n}\sin(2\pi (m u +nv)).\]
As before, we truncate the Fourier series to obtain a numerically tractable expression.
\subsubsection{Current-sheet representation}
\label{subsec:current-sheet_representation}
As mentioned in the previous section, we need to parameterized all divergence-free vector field on $S$ with a fixed total poloidal current $I_p$.
In Appendix~\ref{subsec:div_free_on_surface} we prove that
\begin{align}
    \mathscr{F}_T^0=\{\nabla^\perp \Phi +\lambda_1 \partial_u+ \lambda_2 \partial_v \mid \Phi \in H^1(T), (\lambda_1,\lambda_2) \in \R^2\} \label{eq:div_free_flat_torus}
\end{align}
with $\nabla^\perp \Phi=\frac{\partial \Phi}{\partial u}\partial_v - \frac{\partial \Phi}{\partial v} \partial_u$.

The following lemma describes how embeddings induce isomorphisms between $\mathscr{F}_T^0$ and $\mathscr{F}_S^0$.

\begin{lemma}
Let $\psi : T \to  \R^3$ be an embedding with $S=\psi(T)$ and  consider
        \begin{align*}
            \Psi:  \mathfrak{X}(T)& \to \mathfrak{X}(S)\\
            X &\mapsto \frac{D\psi X}{\left|\frac{\partial \psi}{\partial u} \times \frac{\partial \psi}{\partial v}\right|}.
        \end{align*}
Then    $\Psi$ induces an isomorphism between $\mathscr{F}_T^0$ and $\mathscr{F}_S^0$.
\end{lemma}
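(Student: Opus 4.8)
The plan is to show that $\Psi$ extends to a topological isomorphism of Hilbert spaces $\mathscr{F}_T\to\mathscr{F}_S$ that intertwines the two divergence-free constraints, and then to pass to closures. Throughout I would use that, $T$ being compact and $\psi$ an embedding, $\psi\colon T\to S$ is a diffeomorphism, the Jacobian $J:=\left|\partial_u\psi\times\partial_v\psi\right|$ is pinched between two positive constants, the pulled-back metric $g:=(D\psi)^TD\psi$ on $T$ is uniformly equivalent to the flat one, and $\psi^*\mu_S=J\,\mathrm{d}u\wedge\mathrm{d}v$.

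\textbf{Step 1 (topological isomorphism $\mathscr{F}_T\to\mathscr{F}_S$).} For $X\in\mathfrak{X}(T)$ the vector $D\psi(x)X(x)$ lies in $T_{\psi(x)}S$, so $\Psi(X)$ is a (smooth) tangent field on $S$; $\Psi$ is linear, and its inverse on $\mathfrak{X}(S)$ is the explicit map $Y\mapsto J\,(D\psi)^{-1}(Y\circ\psi)$, using that $D\psi(x)\colon T_xT\to T_{\psi(x)}S$ is invertible. Changing variables through $\psi$ and using $|D\psi\,X|^2=g(X,X)$, I would compute
\[
\|\Psi(X)\|_{\mathscr{F}_S}^2=\int_T\frac{|D\psi\,X|^2}{J^2}\,J\,\mathrm{d}u\,\mathrm{d}v=\int_T\frac{g(X,X)}{J}\,\mathrm{d}u\,\mathrm{d}v,
\]
which, by the uniform bounds on $g$ and $J$, is comparable to $\|X\|_{\mathscr{F}_T}^2$; the analogous estimate holds for $\Psi^{-1}$. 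Hence $\Psi$ extends to a bounded linear isomorphism $\mathscr{F}_T\to\mathscr{F}_S$ with bounded inverse.

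\textbf{Step 2 (preservation of the divergence-free constraint).} This is the crux, and it explains the weight $1/J$ in the definition of $\Psi$. On a surface, a smooth tangent field $Y$ is divergence-free with respect to an area form $\omega$ iff the Lie derivative $\mathcal{L}_Y\omega$ vanishes, i.e. iff $\mathrm{d}(\iota_Y\omega)=0$ — a condition that only involves $\omega$. Writing $\Psi(X)=\left(\frac{1}{J}\circ\psi^{-1}\right)\psi_*X$ and invoking naturality of the interior product under the diffeomorphism $\psi$, I would get
\[
\psi^*\!\big(\iota_{\Psi(X)}\mu_S\big)=\frac{1}{J}\,\psi^*\!\big(\iota_{\psi_*X}\mu_S\big)=\frac{1}{J}\,\iota_X(\psi^*\mu_S)=\frac{1}{J}\,\iota_X\!\big(J\,\mathrm{d}u\wedge\mathrm{d}v\big)=\iota_X(\mathrm{d}u\wedge\mathrm{d}v).
\]
Since $\psi^*$ commutes with $\mathrm{d}$ and is injective on forms, $\mathrm{d}(\iota_{\Psi(X)}\mu_S)=0$ if and only if $\mathrm{d}(\iota_X(\mathrm{d}u\wedge\mathrm{d}v))=0$; that is, $\Psi(X)$ is divergence-free on $S$ exactly when $X$ is divergence-free on $T$. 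Together with Step 1, this shows that $\Psi$ restricts to a bijection between the divergence-free elements of $\mathfrak{X}(T)$ and those of $\mathfrak{X}(S)$.

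\textbf{Step 3 (passage to the closures) and the expected obstacle.} By definition $\mathscr{F}_T^0$ (resp. $\mathscr{F}_S^0$) is the $\|\cdot\|_{\mathscr{F}_T}$-closure (resp. $\|\cdot\|_{\mathscr{F}_S}$-closure) of the divergence-free fields in $\mathfrak{X}(T)$ (resp. $\mathfrak{X}(S)$); by Step 2 the continuous map $\Psi$ sends this dense subset of $\mathscr{F}_T^0$ into $\mathscr{F}_S^0$, so $\Psi(\mathscr{F}_T^0)\subseteq\mathscr{F}_S^0$, and the same argument applied to the continuous inverse yields the reverse inclusion, whence $\Psi$ induces a topological isomorphism $\mathscr{F}_T^0\to\mathscr{F}_S^0$. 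The only genuinely delicate point will be Step 2: one must recognize that ``divergence-free'' is a property of the \emph{area form} and not of the metric, so that the correct intertwiner is the push-forward \emph{renormalized by the Jacobian} $J$, and that with precisely this renormalization the pull-back of $\iota_{\Psi(X)}\mu_S$ coincides with $\iota_X(\mathrm{d}u\wedge\mathrm{d}v)$. A secondary, purely technical, issue is the regularity of $\psi$: smoothness is what keeps $\Psi(X)$ inside $\mathfrak{X}(S)$, but under the weaker $\mathscr{C}^{1,1}$ regularity used elsewhere in the paper the identities of Steps~1--2 remain valid in the appropriate Sobolev scale and the statement is unchanged.
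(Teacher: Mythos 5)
Your proof is correct, and its key computation is sound: with the Jacobian weight $1/J$, the pull-back of the flux form $\iota_{\Psi(X)}\mu_S$ is exactly $\iota_X(\mathrm{d}u\wedge \mathrm{d}v)$, so the divergence-free condition is preserved in both directions. The route differs from the paper's in one respect. The paper disposes of this lemma by declaring it ``completely similar'' to its Lemma on $\Phi^\eps$, whose proof is a \emph{weak-formulation} argument: by the Hodge decomposition, divergence-free fields are characterized as the orthogonal complement of $\{\nabla_S f\}$, and a change of variables shows $\int_{S}\langle df,\Psi(X)\rangle\,d\mu_{S}=\int_T\langle d(f\circ\psi),X\rangle\,d\mu_T$, so orthogonality to gradients is transported. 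You instead use the \emph{pointwise} characterization $\operatorname{div}Y=0\iff \mathrm{d}(\iota_Y\mu)=0$ together with naturality of the interior product under $\psi$. These are dual formulations of the same fact (one is the other integrated against test functions via Stokes), but they buy slightly different things: the paper's version pairs directly with the $L^2$-closure and with the low ($\mathscr{C}^{1,1}$) regularity used elsewhere, since it never differentiates the vector field; yours is more transparent about \emph{why} the $1/J$ normalization is the right one (the condition depends only on the area form, and $1/J$ cancels $\psi^*\mu_S=J\,\mathrm{d}u\wedge\mathrm{d}v$), at the price of first arguing on smooth fields and then passing to closures by density, as you do in Step 3. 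Your closing caveat about $\mathscr{C}^{1,1}$ regularity is exactly the point where the weak formulation is the more robust of the two.
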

The proof is completely similar to that of Lemma~\ref{lem:div_preserve}.

Let us suppose now that $(u,v)$ are poloidal and toroidal coordinates for the parameterization $\psi$, that is,
\begin{itemize}
    \item $\Gamma_p: \R/\Z\ni t\mapsto \psi(t,0)\in  S$ is a loop doing exactly one poloidal turn (and 0 toroidal ones);
    \item $\Gamma_t: \R/\Z \ni t\mapsto \psi(0,t)\in S$ is a loop doing exactly one toroidal turn (and 0 poloidal ones).
\end{itemize}
Besides, as is it in general the convention in the dedicated literature, we assume that $\psi$ is 
orientation reversing,
meaning that 
\begin{equation}\label{LaRe1301}
(\Psi(\partial_u),\Psi(\partial_v),-\nu)\text{ is direct,}
\end{equation} 
with $\nu$ the outward normal vector field.
\begin{lemma}
    Let $X=\nabla^\perp \Phi +I_p \partial_u+ I_t \partial_v$.
    Then the poloidal (respectively, toroidal) flux of $\Psi(X)$, i.e., the flux of $\Psi(X)$ across $\Gamma_t$ (respectively,  $\Gamma_p$), is given by $I_p$ (respectively, $I_t$).
\end{lemma}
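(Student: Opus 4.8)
The plan is to compute the two fluxes directly by expressing them as line integrals against the vector field $\Psi(X)$ crossing the curves $\Gamma_t$ and $\Gamma_p$, and then to exploit the fact that the normalization built into $\Psi$ (division by $|\partial_u\psi\times\partial_v\psi|$) is precisely what makes the surface integrals collapse to integrals of the components of $X$ on the flat torus. Recall that the flux of a tangent vector field $Y$ on $S$ across a loop $\gamma$ (bounding, together with $Y$, a surface current-sheet picture) is $\int_\gamma \langle Y, n_\gamma\rangle\,ds$, where $n_\gamma$ is the unit normal to $\gamma$ inside $S$, oriented consistently with $\nu$. The first step is to write this flux intrinsically: for a current sheet, the flux of $Y$ across $\gamma$ equals $\int_\gamma \langle Y\times \nu, d\ell\rangle$, i.e. the circulation of the rotated field $Y\times\nu$ along $\gamma$ (this uses the orientation convention \eqref{LaRe1301}, which fixes all the signs).

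Next I would pull everything back to $T$ via $\psi$. Writing $Y=\Psi(X)$ with $X=\nabla^\perp\Phi+I_p\partial_u+I_t\partial_v$, one has $Y = \dfrac{D\psi\,X}{|\partial_u\psi\times\partial_v\psi|}$, and the outward normal is $\nu = \pm\dfrac{\partial_u\psi\times\partial_v\psi}{|\partial_u\psi\times\partial_v\psi|}$. A short computation with the scalar triple product shows that for $X = a\,\partial_u + b\,\partial_v$ one gets
\[
\langle Y\times\nu,\ D\psi\,\partial_u\rangle = \mp\, b, \qquad \langle Y\times\nu,\ D\psi\,\partial_v\rangle = \pm\, a,
\]
because the two normalizing factors $|\partial_u\psi\times\partial_v\psi|$ cancel exactly against the Gram-type determinant appearing in $\langle (D\psi\,X)\times(\partial_u\psi\times\partial_v\psi), D\psi\,\partial_u\rangle$ and the analogous expression with $\partial_v$. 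Now $\Gamma_t$ is parametrized by $t\mapsto\psi(0,t)$, so along it $d\ell = D\psi\,\partial_v\,dt$; hence the flux of $Y$ across $\Gamma_t$ is $\pm\int_0^1 a(0,t)\,dt$, where $a$ is the $\partial_u$-component of $X$. Similarly the flux across $\Gamma_p$ is $\mp\int_0^1 b(t,0)\,dt$, with $b$ the $\partial_v$-component of $X$.

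It remains to evaluate these two integrals for $X=\nabla^\perp\Phi+I_p\partial_u+I_t\partial_v$. By \eqref{eq:div_free_flat_torus}, the $\partial_u$-component of $X$ is $-\partial_v\Phi + I_p$ and the $\partial_v$-component is $\partial_u\Phi + I_t$. Since $\Phi\in H^1(T)$ is periodic, $\int_0^1 \partial_v\Phi(0,t)\,dt = 0$ and $\int_0^1 \partial_u\Phi(t,0)\,dt = 0$, so the two fluxes are $\pm I_p$ and $\mp I_t$ respectively; the orientation convention \eqref{LaRe1301} and the choice of $(u,v)$ as poloidal/toroidal coordinates fix the signs to give exactly $I_p$ and $I_t$. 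The main obstacle is the bookkeeping in the second step: getting the triple-product cancellation right and tracking the sign through the orientation-reversing convention. Everything else is a routine periodicity argument. I would present the triple-product identity as a separate displayed computation to keep the signs transparent, and note that the cancellation of $|\partial_u\psi\times\partial_v\psi|$ is exactly the reason the normalization was inserted in the definition of $\Psi$ in the first place.
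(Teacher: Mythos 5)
Your proposal is correct and follows essentially the same route as the paper: express the flux across $\Gamma_t$ (resp.\ $\Gamma_p$) as a line integral of the rotated field $\Psi(X)\times\nu$, use the scalar triple product and the identity $A\times(B\times C)=B\langle A,C\rangle-C\langle A,B\rangle$ so that the normalization $|\partial_u\psi\times\partial_v\psi|$ in the definition of $\Psi$ cancels, and conclude by periodicity of $\Phi$, with the signs pinned down by the orientation convention \eqref{LaRe1301}. The only cosmetic difference is that the paper fixes the signs explicitly from the start rather than carrying a $\pm$ through the computation.
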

\begin{proof}
        Remark that $\dive \Psi(X)=0$ ensure that the flux across any loop depends only on the isotopic class of the loop considered.
        Recall that the flux of $\Psi(X)$ across some loop $\Gamma$ is given by
        $$\oint_\Gamma \langle \Psi(X) , \left(\frac{\Gamma'}{|\Gamma'|}\times -\nu \right)\rangle d\mu_\Gamma=\int_0^1 \langle \Psi(X) , (\Gamma' \times -\nu )\rangle(\Gamma(t)) dt,$$
      where the choice of the sign in $-\nu$ is due to  to the convention \eqref{LaRe1301}.
        Thus, the flux across $\Gamma_t$ (the poloidal flux) is
        \begin{align*}
            \int_0^1 \langle \Psi(X) , (\frac{\partial \psi}{\partial v} \times -\nu)\rangle (\Gamma_t(t)) dt
                &=-\int_0^1 \langle \nu , (\Psi(X) \times \frac{\partial \psi}{\partial v})\rangle (\Gamma_t(t)) dt\\
                &=\int_0^1 \frac{1}{\left|\frac{\partial \psi}{\partial u} \times \frac{\partial \psi} {\partial v}\right|^2}\langle (\frac{\partial \psi}{\partial u} \times \frac{\partial \psi} {\partial v}) , (D\psi (\nabla^\perp \Phi +I_p\partial_u +I_t \partial_v) \times \frac{\partial \psi}{\partial v})\rangle (\Gamma_t(t)) dt\\
                &=\int_0^1 \frac{1}{\left|\frac{\partial \psi}{\partial u} \times \frac{\partial \psi} {\partial v}\right|^2}\langle (\frac{\partial \psi}{\partial u} \times \frac{\partial \psi} {\partial v}) , (I_p-\frac{\partial \Phi}{\partial v}) \frac{\partial \psi}{\partial u} \times \frac{\partial \psi}{\partial v})\rangle (\Gamma_t(t)) dt\\
                &=\int_0^1\left( I_p-\frac{\partial \Phi}{\partial v}(0,t) \right)dt\\
                &=I_p.
        \end{align*}
        The computation of the flux across $\Gamma_p$ is analogous.
\end{proof}
 Thanks to this lemma, in order to minimize
on $\mathscr{F}_S^0$ we 
fix $I_p$ and $I_t$ 
and minimize with respect to $\Phi$.
Indeed, $I_p$ is 
fixed by the toroidal circulation of the target magnetic field (see \ref{subsec:poisson_torus}), whereas
$I_t$ is usually set to 0. This second condition is necessary to ensure the existence of ``poloidal coils". Otherwise, no closed field lines would realize one poloidal turn and zero toroidal ones.
 Thus, the set of admissible currents is described by
 $$J_{\mathrm{adm}}(S)=\{ \Psi(\nabla^\perp \Phi+I_p \partial_u+I_t\partial_v) \mid \Phi \in H^1(T)\}.$$
We say that $\Phi$ is the \emph{scalar 
current potential}.
By stellarator symmetry, its expansion in Fourier series is 
 \begin{align*}
 \Phi(u,v)=\sum_{m \geq 0} \sum_{n\in \Z} \Phi_{m,n}\sin(2 \pi (m u+n v)).
\end{align*}

Let us denote 
\[j_{a,S}=\Psi(I_p \partial_u+I_t\partial_v)\qquad \mbox{and}\qquad \hat{\mathscr{F}}_S^0=\{ \Psi(\nabla^\perp \Phi) \mid \Phi \in H^1(T)\}.\]
It is straightforward that the affine decomposition 
$ J_{\mathrm{adm}}(S)=j_{a,S}+\hat{\mathscr{F}}_S^0$
is compatible with $\Phi^\eps$ (cf. \eqref{def:Phi^eps}),
meaning that for any shape deformation $\theta$,
\begin{align*}
    \Phi^\eps(j_{a,S})=j_{a,S^\eps} \qquad \text{and }\qquad  \Phi^\eps(\hat{\mathscr{F}}_S^0)=\hat{\mathscr{F}}_{S^\eps}^0.
\end{align*}
Thus, we can consider the restriction of $\operatorname{BS}_S$ to $\hat{\mathscr{F}}_S^0$ that we will denote $\widehat{\operatorname{BS}}_S$. Let $\widehat{\operatorname{BS}}_S^\dagger$ be its adjoint (in $\hat{\mathscr{F}}_S^0$)
and $\hat{\pi}_S$ the orthogonal projector defined in $\mathscr{F}_S^0$ onto $\hat{\mathscr{F}}_S^0$.
Then Lemma~\ref{lem:existence} holds and the expression of the unique minimizer is given by
\begin{align*}
    \hat{\jmath}_S &= (\lambda \operatorname{Id}+\widehat{\operatorname{BS}}_S^\dagger \widehat{\operatorname{BS}}_S)^{-1} \left( \widehat{\operatorname{BS}}_S^\dagger (B_T-\operatorname{BS}_S j^a_S)-\lambda \hat{\pi}_S j_S^a \right),\quad j_S=j_S^a +\hat{\jmath}_S\\
    C(S)&= \lambda\Vert j_S\Vert^2_{\mathscr{F}_S}+ \Vert \operatorname{BS}_S j_S -B_T \Vert^2_{L^2(P,\R^3)}.
\end{align*}
\subsection{Implementation}
We wrote our implementation in python using several scientific computing open source libraries and, in particular:
\begin{itemize}
     \item Numpy \cite{harris2020array} for array computation,
     \item Scipy \cite{2020SciPy-NMeth} for the implementation of the  Broyden-Fletcher-Goldfarb-Shanno (BFGS) minimization algorithm,
     \item Opt\_einsum \cite{Smith2018} for optimizing tensor construction,
     \item Dask \cite{dask} for large array and efficient scientific computing parallelization,
     \item Matplotlib \cite{Hunter:2007} and Mayavi \cite{ramachandran2011mayavi} for plotting and graphic representations.
\end{itemize}
The full code is available on our gitlab\footnote{\url{https://plmlab.math.cnrs.fr/rrobin/stellacode}} under MPL 2 license.

The constraints on the perimeter, the reach and the plasma-CWS distance are implemented as a nonlinear penalization cost which blows up rapidly once the values exceed (or subceed) a given threshold. We refer to the code documentation for further details\footnote{\url{https://rrobin.pages.math.cnrs.fr/stellacode/}}.

\subsection{Numerical results}
In what follows, the data used for the simulations come from the NCSX stellarator equilibrium known as LI383 \cite{Zarn}. We will also use as reference CWS the one used in the original REGCOIL paper \cite{landremanImprovedCurrentPotential2017}.

We present here four simulations. We used either $\lambda = 2.5e^{-16}$ or $\lambda = 5.1e^{-19}$ as regularization parameter in the expression of the cost $C$. We mesh the CWS and the plasma surface with $64\times 64$ grids.
The scalar current potential $\Phi$ is developed in Fourier series up to order 12 in both directions.
The optimization is performed with up to $2000$ steps of the BFGS algorithm.
In every simulation
we implemented a penalization on the perimeter of the CWS (penalization above $56$m$^2$) and on plasma-CWS distance (penalization under $20$cm). We also implemented a reach penalization for two simulations (penalization under $7.69$cm).
Let us call Ref 
the initial CWS.
We use DP 
to refer to the simulations with distance and perimeter penalization and DPR 
for those with additional reach penalization.
The numerical results are summarized in \Cref{fig:table_medium_reg,fig:table_small_reg}. Figure~\ref{fig:multi_plot} illustrates the convergence history of the implemented optimization algorithm.
\begin{table}[h!]
    \begin{align*}
        \begin{array}{|c|c|c|c|c|c|c|c|c|c}
            \hline
            type&\chi_B^2&\chi_j^2&C(S)&\text{Distance } (m)&\text{Perimeter }(m^2)&\text{Reach }(m)&\text{number of iteration}\\
            \hline
            \text{Ref}&4.80e^{-03}&1.43e^{+14}&4.06e^{-02}&1.92e^{-01}&5.57e^{+01}&8.40e^{-02}&\\ 
            \hline
            \text{DPR}&1.23e^{-03}&9.48e^{+13}&2.49e^{-02}&1.99e^{-01}&5.60e^{+01}&7.69e^{-02}&775\\ 
            \hline
            \text{DP}&1.05e^{-03}&7.36e^{+13}&1.95e^{-02}&2.00e^{-01}&5.60e^{+01}&4.33e^{-06}&2000\\ 
            \hline
        \end{array}
    \end{align*}
    
    \caption{Numerical results for $\lambda=2.5e^{-16}$}
    \label{fig:table_medium_reg}
\end{table}

\begin{table}[h!]
    \begin{align*}
        \begin{array}{|c|c|c|c|c|c|c|c|c|c}
            \hline
            type&\chi_B^2&\chi_j^2&C(S)&\text{Distance } (m)&\text{Perimeter }(m^2)&\text{Reach }(m)&\text{number of iteration}\\
            \hline
            \text{Ref}&1.44e^{-04}&4.91e+14&3.94e^{-04}&1.92e^{-01}&5.57e^{+01}&8.40e^{-02}&\\ 
            \hline
            \text{DPR}&9.05e^{-06}&1.26e+14&7.34e^{-05}&2.00e^{-01}&4.17e+01&7.69e^{-02}&2000\\
            \hline
            \text{DP}&7.16e^{-06}&1.21e+14&6.90e^{-05}&2.00e^{-01}&5.60e+01&8.33e^{-05}&2000\\
            \hline 
        \end{array}
    \end{align*}
    \caption{Numerical results for $\lambda=5.1e^{-19}$} 
    \label{fig:table_small_reg}
\end{table}
\begin{remark}
    Without penalization on the reach, one naturally obtains better results (as less constraints are applied on the set of admissible shapes).
    Nevertheless, such an approach seems a very bad idea:
    \begin{itemize}
        \item theoretically, since the existence of an optimal shape is guaranteed only for bounded reach,
        \item numerically, because sharper and sharper ``spikes" appear, as shown in \Cref{fig:spikes}. Those spikes can be arbitrary long while still keeping a finite perimeter (and encapsulated volume).
    \end{itemize}
\end{remark}

\begin{center}
\begin{figure}[h]
\begin{minipage}{8cm}
        \centering
            \includegraphics[scale=0.6]{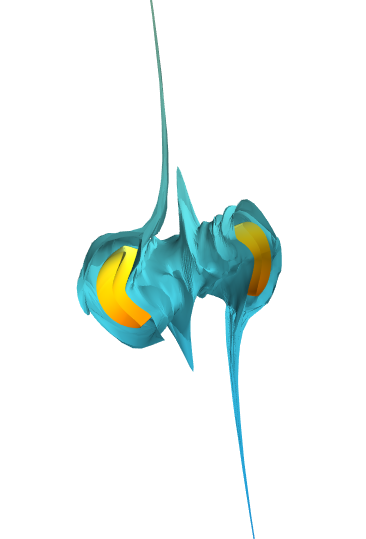}
          \caption{Main pattern of the optimal CWS for the DP simulation with $\lambda=2.5e^{-16}$, top and bottom spikes have been truncated. To obtain the complete CWS, it is enough to make two rotations of angle $2\pi/3$ around the principal axis of the stellarator. \label{fig:spikes}}
    \end{minipage} \hspace{0.5cm}
    \begin{minipage}{8cm}
        \centering
    \begin{minipage}{8cm}
        \centering
        \includegraphics[scale=0.17]{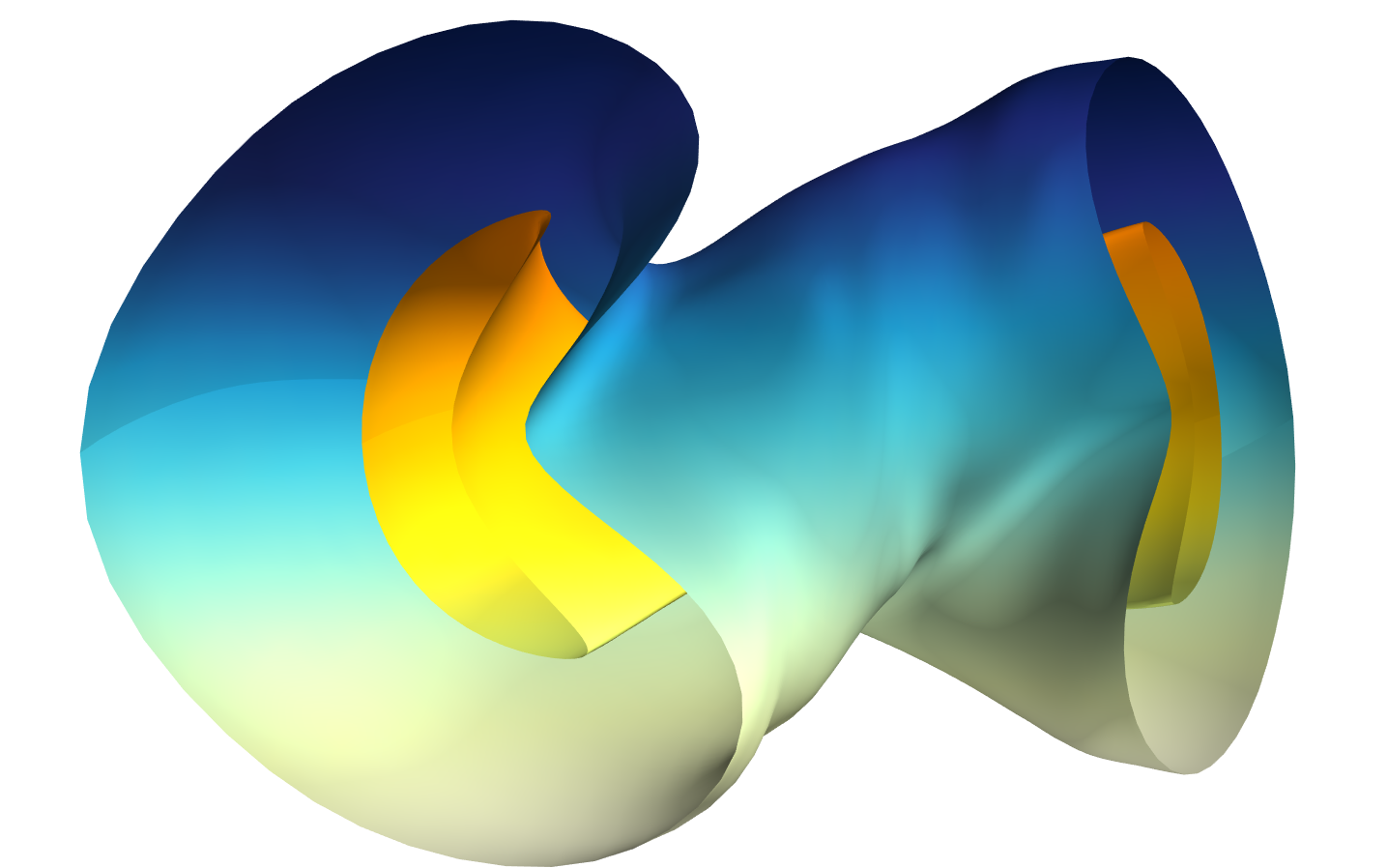}
        \caption{Main pattern of the CWS (blue and white) and plasma surface (orange) of the National Compact Stellarator Experiment (NCSX) designed by the Princeton Plasma Physics Laboratory. To obtain the complete CWS, it is enough to make two rotations of angle $2\pi/3$ around the principal axis of the stellarator}
    \end{minipage}
    \begin{minipage}{8cm}
        \centering
     \includegraphics[scale=0.26]{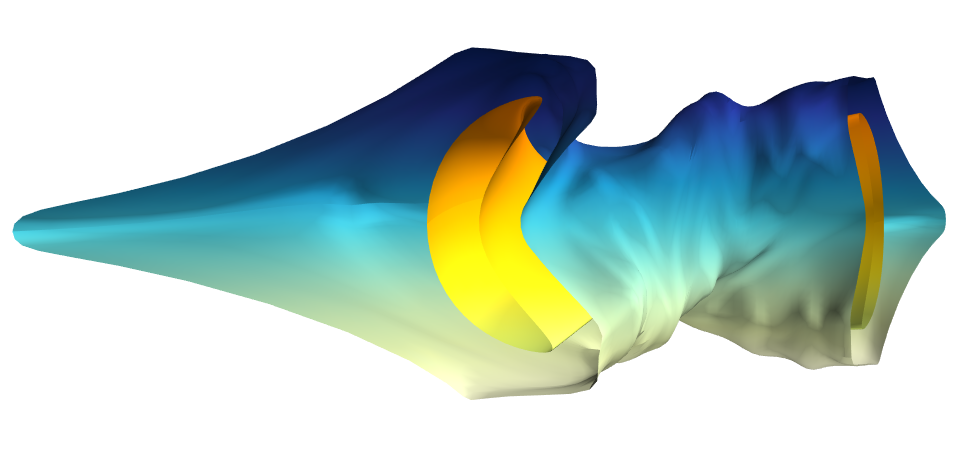}
    \caption{Main pattern of the optimal CWS for the DPR simulation with $\lambda=2.5e^{-16}$. To obtain the complete CWS, it is enough to make two rotations of angle $2\pi/3$ around the principal axis of the stellarator.}
    \end{minipage}
        \end{minipage}     
\end{figure}
\end{center}
%
%

\begin{figure}
    \includegraphics[width=\textwidth]{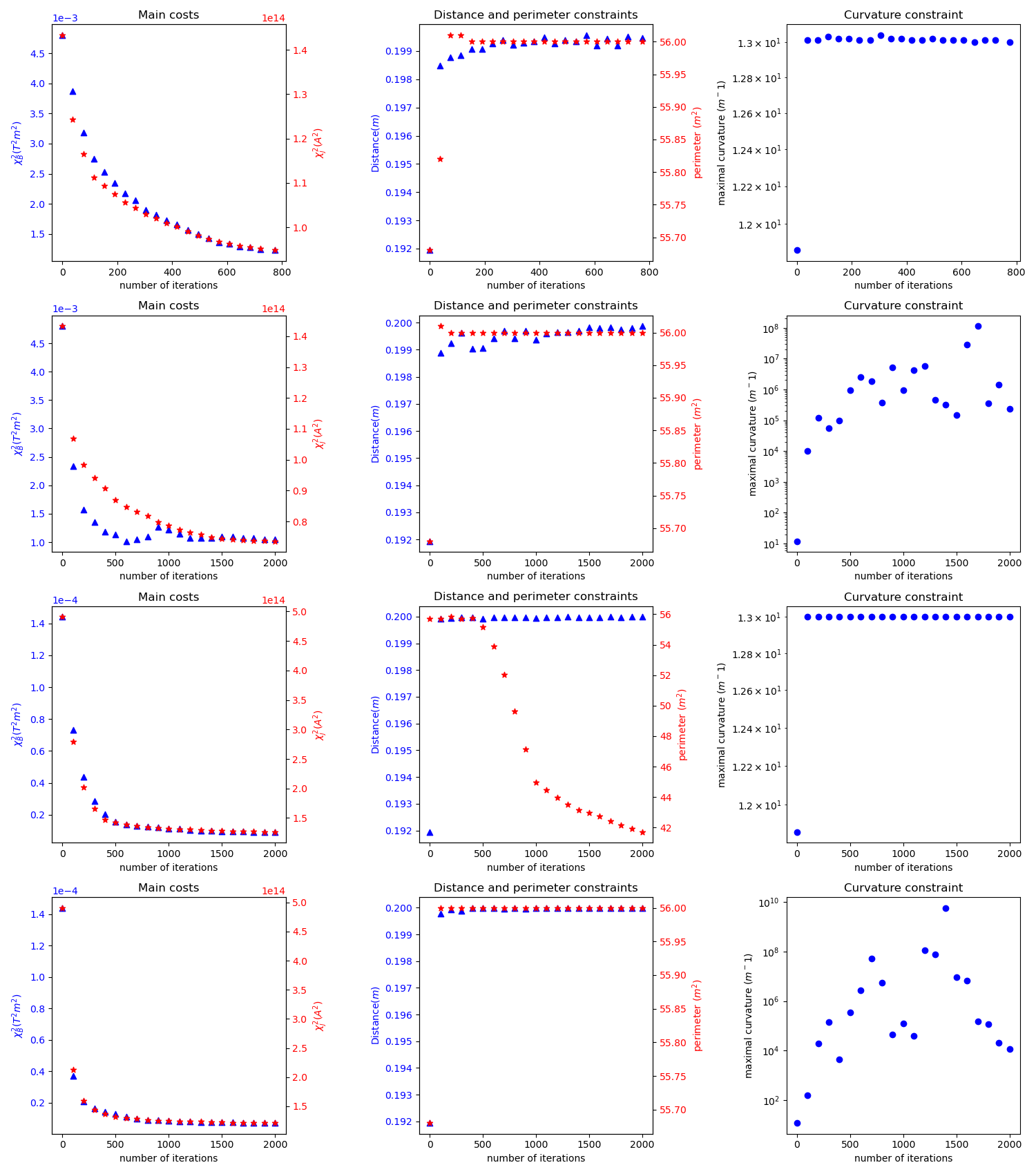}
    \caption{History of convergence for the implemented optimization algorithm. From left to right, evolution of the costs (left), distance and perimeter constraints (middle) and  the curvature constraint (right) along the optimization process. From top to bottom: configurations DPR and DP, respectively for  Tables~\ref{fig:table_medium_reg} ($\lambda=2.5e^{-16}$) and \ref{fig:table_small_reg} ($\lambda=5e^{-19}$).}
    \label{fig:multi_plot}
\end{figure}

\appendix
\section{Some differential geometry}
In this section, we recall some basics fact about differential geometry and vector fields on toroidal surfaces and domains.
\subsection{Hodge decomposition}\label{append:Hodge}
We recall in this part some
notions of differential geometry and in particular of Hodge theory. We refer to
\cite[Chapter 3]{jostRiemannianGeometryGeometric2017} and \cite{leeIntroductionSmoothManifolds2012} for details and precise definitions  in the smooth setting. Although we are only interested in $\mathscr{C}^{1,1}$ manifolds in this article, note for the sake of completeness that details on Hodge theory for Lipschitz manifolds can be found for instance in \cite{PMIHES_1983__58__39_0}.

The Hodge decomposition is a powerful tool which gives an orthogonal decomposition of the space of square integrable $p$-forms on a Riemannian closed manifold $M$ as 
$$L^2_p(M)=B_p \oplus B^*_p \oplus \mathcal{H}_p, $$
where $B_p$ is the $L^2$-closure of $\{d \alpha \mid \alpha \in \Omega^{p-1}(M) \}$, $B_p^*$ is the $L^2$-closure of $\{d^* \beta \mid \beta \in \Omega^{p+1}(M) \}$ ($d^*$  is the coderivative), and $\mathcal{H}_p$ is the set $\{ \omega \in \Omega^{p}(M) \mid \Delta_H \omega=0\}$ of harmonic $p$-forms  with $\Delta_H$ the Hodge Laplacian. 

We  apply this result to the simple case of 1-forms on a two-dimensional closed Riemannian manifold $S$.
We recall a few basics facts:
\begin{itemize}
    \item 1-forms and vector fields can be identified thanks to the Riemannian metric. This isomorphism is called the \emph{musical isomorphism} and we denote by $X^b$ the 1-form defined as the image of a vector field $X$ thought the musical isomorphism. Conversely $w^{\#}$ denotes the vector field which is the image of the 1-form $\omega$.
    \item The divergence of a vector field $X$ is $-d^*X^b$.
    \item $d\circ d=0$ and $d^*\circ d^*=0$.
    \item $\Delta_H \alpha=0$ is equivalent to the system of equations $\begin{cases}d\alpha=0,\\d^* \alpha=0.
    \end{cases}$
\end{itemize} 
We want to show that the space of ``divergence-free" 1-forms (i.e., 
$\ker d^*$) coincides with 
$B^*_1\oplus \mathcal{H}_p$. It is clear that the latter space is contained in $\ker d^*|_{\Omega^1({M})}$. Conversely, for every exact form $\omega$, i.e., such that $\omega=df$ with $f\in \mathscr{C}^\infty(M)$, one has $d^*\omega=d^*df=\Delta_H f$. We recall that the Hodge Laplacian coincides with the Laplace--Beltrami operator on 0-forms. But $\Delta_H f=0$ implies that $f$ is constant on each connected component, thus $d^*\omega=0$ implies that $\omega=0$. As a result the space of divergence-free 1-forms is $B^*_1\oplus \mathcal{H}_p= (B_1)^\perp$.

Equivalently, 
the space of divergence-free vector fields coincides with 
the orthogonal to $\{\nabla f\mid f\in \Omega^0(M)\}$. In Appendix~\ref{subsec:div_free_on_surface} we give an explicit description of $(B^*_1\oplus \mathcal{H}_p)^{\#}$ for the two-dimensional flat torus.

\subsection{Divergence-free vector field on a flat torus}
\label{subsec:div_free_on_surface}
            Let $T=(\R/\Z)^2$ be the flat torus with Cartesian parametrization $(u,v)$.
            We want to characterize the set of divergence-free vector fields on $T$.

            As explained in \ref{append:Hodge}, we only need to characterizes $B_1^*(T)$ and $\mathcal{H}_1(T)$.
            \begin{itemize}
                \item $B_1^*(T)$ is the $L^2$-closure of the 1-forms
                $\frac{\partial \Phi}{\partial u}dv-\frac{\partial \Phi}{\partial v}du$ for $\Phi\in \mathscr{C}^\infty(T)$.
                \item $\mathcal{H}_1(T)$ is a two-dimensional vector space as the first Betti number for a 2D torus satisfies $b_1=2$. We easily compute $\mathcal{H}_1(T)= \{ \lambda_1 du+\lambda_2 dv \mid (\lambda_1,\lambda_2)\in \R^2 \}$.
            \end{itemize}
            Using the musical isomorphism, we deduce that all divergence-free vector fields in $L^2$ have the form given in Equation \eqref{eq:div_free_flat_torus}.
            

\subsection{Poisson equation on a toroidal 3D domain}
\label{subsec:poisson_torus}
    Given a toroidal 3D domain $P$, we want to study the Maxwell equations in vacuum inside $P$. We introduce a toroidal loop  $\Gamma$ inside $P$ and denote by $I_p$ the electric current-flux across any surface enclosed by $\Gamma$. By the conservation of charges ($\dive j=0$), this quantity is well defined. By smoothness of the Biot and Savart operator and of the plasma boundary $\partial P$, 
  all functions considered in this appendix may be assumed to be $\mathscr{C}^\infty$.
    \begin{lemma}
        Let $g$ be the normal magnetic field on $\partial P$ (i.e., the normal component of $B|_{\partial P}$). Then $g$ and $I_p$ determine completely the magnetic field $B$ in $P$.
        Besides, there exists a constant $C>0$ such that for every other  magnetic field $\tilde{B}$ with the same total poloidal current, $|B-\tilde{B}|_{L^2(P,\R^3)}\leq C |g-\tilde{g}|_{L^2(\partial P)}$ where $\tilde{g}$ is the normal component of $\tilde{B}|_{\partial P}$.
    \end{lemma}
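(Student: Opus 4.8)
The plan is to decompose the magnetic field $B$ inside the current-free region $P$ into a part governed by the cohomology of the toroidal domain and a part that is a gradient of a harmonic scalar potential determined by a Neumann-type boundary value problem. Since there are no electric currents in $P$, we have $\nabla\times B=0$ and $\dive B=0$ in $P$. Because $P$ is a toroidal domain, its first de Rham cohomology is one-dimensional; fix a harmonic field $h$ on $P$ (a smooth curl-free, divergence-free field tangent to $\partial P$) normalized so that its circulation along the toroidal loop $\Gamma$ equals $1$. Then $B - I_p\, h$ is curl-free in the simply-connected-after-cutting sense, hence can be written as $\nabla\phi$ for a single-valued scalar potential $\phi$ on $P$ (the multivaluedness has been absorbed into $I_p h$). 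The condition $\dive B=0$ forces $\Delta\phi=0$, and the normal trace condition reads $\partial_\nu\phi = g - I_p\langle h,\nu\rangle$ on $\partial P$. Since $\int_{\partial P}\langle h,\nu\rangle\,d\mu_{\partial P}=0$ (as $\dive h=0$) and $\int_{\partial P} g\,d\mu_{\partial P}=\int_P \dive B=0$, the right-hand side has zero mean, so the Neumann problem for $\phi$ is solvable and $\phi$ is unique up to an additive constant, which does not affect $B=\nabla\phi+I_p h$. This proves that $g$ and $I_p$ determine $B$ completely.

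For the quantitative estimate, let $\tilde B$ be another current-free, divergence-free field in $P$ with the same total poloidal current $I_p$, and normal trace $\tilde g$. Then by the same decomposition $\tilde B = \nabla\tilde\phi + I_p h$ with $\Delta\tilde\phi=0$ in $P$ and $\partial_\nu\tilde\phi = \tilde g - I_p\langle h,\nu\rangle$. Hence $B-\tilde B = \nabla(\phi-\tilde\phi)$ where $w:=\phi-\tilde\phi$ solves $\Delta w=0$ in $P$, $\partial_\nu w = g-\tilde g$ on $\partial P$. The desired inequality is then the standard $H^{-1/2}\!\to H^1$ (or here even stronger, using smoothness) elliptic estimate for the Neumann Laplacian on the fixed smooth domain $P$:
\[
\|B-\tilde B\|_{L^2(P,\R^3)} = \|\nabla w\|_{L^2(P,\R^3)} \le C\,\|g-\tilde g\|_{H^{-1/2}(\partial P)} \le C\,\|g-\tilde g\|_{L^2(\partial P)},
\]
with $C$ depending only on $P$; one normalizes $w$ to have zero mean on $P$ to apply the Poincaré–Wirtinger inequality and the variational characterization $\|\nabla w\|_{L^2}^2 = \int_{\partial P}(g-\tilde g)\,w\,d\mu_{\partial P}$ together with the trace inequality $\|w\|_{L^2(\partial P)}\le C\|w\|_{H^1(P)}\le C\|\nabla w\|_{L^2(P)}$.

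The only genuinely delicate point is the construction and control of the harmonic field $h$ and the verification that the topological obstruction is exactly one-dimensional; this is classical for toroidal domains (it is the Hodge–de Rham decomposition for a manifold with boundary, or equivalently the theory of the space of ``harmonic Neumann fields''), and since $\partial P$ is smooth and $h$ is fixed once and for all with $P$, it contributes only to the constant $C$. Everything else is the standard well-posedness and elliptic regularity theory for the Neumann problem on a fixed smooth bounded domain, which also legitimizes the remark at the start of the appendix that all fields involved may be taken $\mathscr{C}^\infty$ by elliptic regularity, given the smoothness of $B_T$, of $\operatorname{BS}_S$, and of $\partial P$. I would present the cohomological decomposition as a cited lemma and spend the bulk of the written proof on the clean Neumann-problem argument above.
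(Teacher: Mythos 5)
Your proposal is correct and follows essentially the same route as the paper: the paper likewise fixes a curl-free, divergence-free representative $X$ of the one-dimensional de Rham cohomology of $P$, writes $B=\nabla\xi+\alpha X$ with $\alpha$ pinned down by the circulation $I_p$ along $\Gamma$, recovers $\xi$ from a Neumann problem with data $g-\alpha\langle X,\nu\rangle$, and obtains the $L^2$ estimate from well-posedness of that Neumann problem. The only cosmetic difference is your normalization of the harmonic field so that $\alpha=I_p$ directly (and a harmless hesitation about whether $h$ is tangent to $\partial P$), which changes nothing of substance.
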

    Before going to the proof of this statement, we emphasize that the structure of the space $L^2(P,\R^3)$ is well understood and admits 
    a generalized Hodge decomposition ($P$ is not a closed manifold, thus part \ref{append:Hodge} does not apply) from which the lemma follows easily. Such a decomposition is proved, for example, in \cite{Cantarella2002}. For completeness, we provide the following proof.
\begin{proof}
    We have the  cochain complex (meaning $\Ima(\grad) \subset \ker(\curl)$ and $\Ima(\curl) \subset \ker(\dive)$ )
    \[
    \xymatrix{
      \mathscr{C}^{\infty}(P) \ar[r]^-{\grad} & \mathscr{C}^{\infty}(P,\R^3) \ar[r]^-{\curl} & \mathscr{C}^{\infty}(P,\R^3) \ar[r]^-{\dive} & \mathscr{C}^{\infty}(P).
    }
    \]
    For simply connected domains of $\R^3$, the complex is an exact sequence, meaning that $\Ima(\grad)= \ker(\curl)$ and $\Ima(\curl) = \ker(\dive)$.
    For a 3D toroidal domain, the dimension of the quotient space $ \frac{\ker(\curl)}{\Ima(\grad)}$ is always one.
    This is a consequence of the De Rham cohomology of $P$. We refer to \cite[Diagram 16.15]{leeIntroductionSmoothManifolds2012} for further details.
    
    Thus $\ker(\curl) \not \subset \Ima(\grad)$, i.e., there exists
    $X\in \mathscr{C}^\infty(P,\R^3)$ such that $X \not \in \Ima(\grad)$ and $\curl X=0$.
    Without loss of generality, we can suppose that $\dive X=0$.
    Indeed, it is enough to consider $X'=X-\grad \zeta$ with $\zeta$ solution of the Poisson equation
   $$ \begin{array}{ll}
        \Delta \zeta=\dive X & \mbox{in } P, \\
        \zeta=0 & \mbox{on } \partial P.
    \end{array}
    $$
    To have an intuition, the reader can think of the vector field $X=\frac{e_{\theta}}{R}$ in $\R^3\setminus\{R=0\}$ in cylindrical coordinates $(R,\theta,z)$. This vector field is divergence and curl free but is not in the image of a gradient.

    We recall Maxwell's equations for a the static magnetic field in vacuum:
  \begin{align}
        \curl B =0 & \quad \mbox{in } P, \label{eq:MA}\\
        \dive B=0 & \quad \mbox{in } P. \label{eq:MT}
    \end{align}
    Equation \eqref{eq:MA} implies that there exist a scalar potential $\xi\in \mathscr{C}^\infty(P)$ and $\alpha \in \R$ such that
    $$B= \grad \xi +\alpha X.$$
    Using Stoke's theorem, the line integral of $B$ along $\Gamma$ is given by the total flux $I_p$ of electric currents across
  any surface enclosed by   
     $\Gamma$. In particular the contribution of the term $\nabla \xi$ to $I_p$ is zero, yielding
$$
    I_p=\oint_\Gamma \langle B,\frac{\Gamma'}{|\Gamma'|}\rangle d\mu_\Gamma =\oint_\Gamma\langle (\grad \xi+\alpha X), \frac{\Gamma'}{|\Gamma'|}\rangle d\mu_\Gamma=\alpha \oint_\Gamma \langle X, \frac{\Gamma'}{|\Gamma'|}\rangle d\mu_\Gamma.
$$
    The quantity $\oint_\Gamma \langle X,\frac{\Gamma'}{|\Gamma'|}\rangle d\mu_\Gamma$ is nonzero, since otherwise, by the De Rham isomorphism, $X$ would be in $\Ima{\nabla}$.
    Thus, $\alpha$ is uniquely determined by $I_p$, since $X$ does not depend on $B$.

    Equation \eqref{eq:MT} together with the normal component of $B$ on $\partial P$ give
  \begin{equation}         \label{eq:laplace_plasma}
  \begin{split}
        \Delta \xi=0 &\quad \text{in }P,\\
        \partial_n \xi =g - \alpha \langle X, n \rangle  &\quad \text{on }\partial P.
    \end{split}
\end{equation}
    Thus $\xi$ is determined by $g$ and $\alpha$ as the unique solution of a Laplace equation with Neumann boundary conditions.

    Finally, let $B=\nabla \xi + \alpha X$ and $\tilde B=\nabla {\tilde \xi} + \alpha X$ with $\xi$ and $\tilde \xi$ 
    the solutions of equation \eqref{eq:laplace_plasma} corresponding to $g$ and $\tilde g$, respectively. The difference $\delta =\xi-\tilde \xi$ is solution of
    \begin{align*}
        \Delta \delta =0 & \quad \text{in }P,\\
        \partial_n \delta =g-\tilde g & \quad \text{on }\partial P.
    \end{align*}
    By well-posedness of the Laplace equation with Neumann boundary conditions, there exists a constant $C(\partial P)$ such that $|\nabla \delta|_{H^{1/2}} \leq C(\partial P) |g-\tilde g|_{L^2(\partial P)}$.
    Thus, 
    $$|B-\tilde B|_{L^2(P,\R^3)}\lesssim |g-\tilde g|_{L^2(\partial P)},$$
concluding the proof. \end{proof}

\section{Reach constraint and sets of positive reach}\label{sec:append:reach}

In this section, we gather some reminders about the notion of reach. We refer to \cite[Chapter~6, Section~6]{delfour_shapes_2011} for more exhaustive explanations around this notion.

Recall first that, if $V$ is a nonempty subset of $\R^n$, its {\it skeleton}, denoted by $\operatorname{Sk}(V)$, is the set of all points in $\R^n$ whose projection onto $V$ is not unique. 
The set $V$ is said to have {\it a positive reach} whenever there exists $h>0$ such that 
\begin{equation}\label{TNUhV}
\text{every point $v$ of the tubular neighborhood $U_h(V)$ has
a unique projection point on $V$. }
\end{equation}
Recall that the definition of $U_h(V)$ is provided in Section~\ref{sec:notations}.
One thus defines the reach of $V$ as
$$
\operatorname{Reach}(V)=\sup \{h>0\mid \eqref{TNUhV}\text{ is satisfied}\}.
$$
An equivalent definition of the reach writes
$$
\operatorname{Reach}(V)=\inf \{\operatorname{Reach}(V,v)\mid v\in V\},
$$
where 
$$
\operatorname{Reach}(V,v)=\left\{\begin{array}{ll}
0 & \text{if }v\in \partial\overline{V}\cap \overline{\operatorname{Sk}(V)}\\
\sup\{h>0\mid \operatorname{Sk}(V)\cap B_h(v)=\emptyset\} & \text{otherwise,}
\end{array}
\right.
$$
where $B_h(v)$ denotes the Euclidean open ball centered at $v$ with radius $h$.

The notion of reach is actually closely related to the so-called uniform 
 ball condition. The next result make this relationship precise.

\begin{theorem}[Theorems 2.6 and 2.7 in \cite{dalphin_uniform_2018}]\label{theo:reachLit}
Let $\Omega$ be an open subset of $\R^n$ with a nonempty boundary.
\begin{itemize}
\item If there exists $h>0$ such that $\Omega$ satisfies a uniform ball condition, namely
\begin{equation}\label{ballCion}
\forall x\in \partial\Omega, \ \exists d_x\in \R^n \mid \Vert d_x\Vert_{\R^n}=1, \ B_h(x-hd_x)\subset \Omega \text{ and }B_h(x+hd_x)\subset \R^n\backslash \Omega,
\end{equation}
then $\partial\Omega$ has a positive reach which is larger than $h$ and the Lebesgue measure of $\partial\Omega$ in $\R^n$ is equal to 0. Furthermore, $\partial\Omega$ is a $\mathscr{C}^{1,1}$ hypersurface of $\R^n$.
\item If $\partial\Omega$ is a nonempty compact $\mathscr{C}^{1,1}$-hypersurface of $\R^n$, then there exists $h > 0$ such that $\Omega$ satisfies \eqref{ballCion}.
\item If $\partial\Omega$ has a positive reach and if its Lebesgue measure in $\R^n$ is equal to 0, then it satisfies the ball condition \eqref{ballCion} for every $h\in (0, \operatorname{Reach}(\partial\Omega))$ and in particular, $\partial\Omega$ is a $\mathscr{C}^{1,1}$ hypersurface of $\R^n$.
\end{itemize}
\end{theorem}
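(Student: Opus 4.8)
The statement is borrowed verbatim from \cite{dalphin_uniform_2018}, so the plan is to recall the structure of the argument rather than to reproduce every computation. The three items connect the uniform two-sided ball condition \eqref{ballCion} of radius $h$, the bound $\operatorname{Reach}(\partial\Omega)\ge h$ together with $\mathscr{H}^n(\partial\Omega)=0$, and $\mathscr{C}^{1,1}$ regularity. I would prove the first item directly by elementary Euclidean geometry, and then obtain the second and third by, respectively, a uniform tubular neighborhood argument and a reduction to the first item.

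\emph{First item.} Assume \eqref{ballCion} with radius $h$. For the reach bound, take $y\in U_h(\partial\Omega)$, put $r=d(y,\partial\Omega)<h$, and assume $y\in\Omega$ (the case $y\notin\overline\Omega$ is symmetric, and $y\in\partial\Omega$ is trivial). Since $y\in\Omega$, one has $d(y,\partial\Omega)=d(y,\R^n\setminus\Omega)$. Let $x\in\partial\Omega$ be a nearest point and $d_x$ the associated unit vector; from $\overline{B_h(x+hd_x)}\subset\R^n\setminus\Omega$ we get $|y-x-hd_x|\ge r+h$, and expanding with $|y-x|=r$ forces $\langle y-x,d_x\rangle\le -r$, hence $y=x-rd_x$ by the Cauchy--Schwarz equality case. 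If $x_1\ne x_2$ were two nearest points, then $x_i=y+rd_{x_i}$ and $x_1\notin B_h(y-(h-r)d_{x_2})\subset\Omega$; expanding $|x_1-(y-(h-r)d_{x_2})|\ge h$ gives $\langle d_{x_1},d_{x_2}\rangle\ge 1$, so $d_{x_1}=d_{x_2}$ and $x_1=x_2$, a contradiction. Thus the projection onto $\partial\Omega$ is single-valued on $U_h(\partial\Omega)$, i.e.\ $\operatorname{Reach}(\partial\Omega)\ge h$. Next, $\mathscr{H}^n(\partial\Omega)=0$: at every $x\in\partial\Omega$ the interior ball $B_h(x-hd_x)\subset\Omega$ forces the Lebesgue density of $\Omega$ at $x$ to be at least $1/2$, so no point of $\partial\Omega$ is a density point of $\partial\Omega$, and the Lebesgue density theorem yields the claim. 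Finally, the inequalities $|\langle d_{x_0},x-x_0\rangle|\le|x-x_0|^2/(2h)$ for $x\in\partial\Omega$ near $x_0$ (obtained from $x\notin B_h(x_0\pm hd_{x_0})$) show that $\partial\Omega$ is locally the graph of a $\mathscr{C}^1$ function over $x_0+d_{x_0}^\perp$ with $\tfrac1h$-Lipschitz gradient, hence $\partial\Omega$ is $\mathscr{C}^{1,1}$; this is the same quantitative estimate used in Lemma~\ref{lem:Lipschitz}.

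\emph{Second and third items.} If $\partial\Omega$ is a nonempty compact $\mathscr{C}^{1,1}$ hypersurface, it carries a Lipschitz unit normal field $\nu$, and by compactness the projection onto $\partial\Omega$ is single-valued on some $U_\rho(\partial\Omega)$ (tubular neighborhood theorem, the radius being controlled by the Lipschitz constant of $\nu$; if one prefers, approximate $\partial\Omega$ in $\mathscr{C}^1$ by $\mathscr{C}^2$ hypersurfaces with uniformly bounded second fundamental form). For $h<\rho$ and $x\in\partial\Omega$, the center $x-h\nu(x)$ lies in $U_\rho(\partial\Omega)$ with nearest point $x$, so $B_h(x-h\nu(x))$ avoids $\partial\Omega$, is connected, contains a point of $\Omega$, hence lies in $\Omega$; symmetrically $B_h(x+h\nu(x))\subset\R^n\setminus\Omega$, which is \eqref{ballCion}. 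For the third item, assume $\operatorname{Reach}(\partial\Omega)=R>0$ and $\mathscr{H}^n(\partial\Omega)=0$; fix $h<R$ and $x\in\partial\Omega$. Since $\partial\Omega$ is Lebesgue-null, points of $\Omega$ and of $\R^n\setminus\overline\Omega$ accumulate at $x$; the theory of sets of positive reach (see \cite{federer} and \cite[Chapter~6]{delfour_shapes_2011}) provides a unique unit normal direction at $x$ such that the open normal segments of length $R$ on each side lie in $\Omega$ and in $\R^n\setminus\overline\Omega$ and project onto $x$, whence the two balls of radius $h$ tangent at $x$ project onto $x$, avoid $\partial\Omega$, and sit one on each side: this gives \eqref{ballCion} for every such $h$, and $\mathscr{C}^{1,1}$ regularity then follows from the first item.

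\emph{Main difficulty.} The delicate point is the passage, in the first item, from the pinching between two tangent spheres of radius $h$ to genuine $\mathscr{C}^{1,1}$ regularity with the sharp Lipschitz constant $1/h$ for the gradient of the local graph, and, on the converse side, the construction of a uniform tubular neighborhood for a merely $\mathscr{C}^{1,1}$ hypersurface. Both are classical; the complete arguments can be found in \cite{dalphin_uniform_2018}, \cite{federer}, and \cite[Chapter~6]{delfour_shapes_2011}.
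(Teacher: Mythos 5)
The paper does not prove this statement at all: it is quoted verbatim as Theorems~2.6 and~2.7 of \cite{dalphin_uniform_2018} and used as a black box, so there is no in-paper argument to compare yours against. Judged on its own, your treatment of the first item is correct and essentially complete: the computation $\langle y-x,d_x\rangle\le -r$ from $|y-(x+hd_x)|\ge r+h$, the Cauchy--Schwarz equality case giving $y=x-rd_x$, the contradiction $\langle d_{x_1},d_{x_2}\rangle\ge 1$ for two projections, the density argument for $\mathscr{H}^n(\partial\Omega)=0$, and the two-sided estimate $|\langle d_{x_0},x-x_0\rangle|\le |x-x_0|^2/(2h)$ are exactly the ingredients of Dalphin's proof (one should still spell out why the latter estimate makes $\partial\Omega$ a local graph with Lipschitz normal, but the quantitative input is the right one). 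The second item via a uniform tubular neighborhood is also standard and fine.

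The genuine gap is in the third item. You write that "the theory of sets of positive reach provides a unique unit normal direction at $x$ such that the open normal segments of length $R$ on each side lie in $\Omega$ and in $\R^n\setminus\overline\Omega$." That sentence is the theorem, not a tool: Federer's theory only gives that for $v$ in the normal cone $\mathrm{Nor}(\partial\Omega,x)$ and $r<\operatorname{Reach}(\partial\Omega)$ the ball $B_r(x+rv)$ misses $\partial\Omega$; it does not by itself produce two \emph{antipodal} unit normals at every boundary point, nor does it sort the two tangent balls into $\Omega$ and $\R^n\setminus\overline\Omega$. This is precisely where the hypothesis $\mathscr{H}^n(\partial\Omega)=0$ must enter, and where degenerate boundary points have to be excluded (consider $\Omega=B_1(0)\setminus\{0\}$: here $\partial\Omega=S^{n-1}\cup\{0\}$ has positive reach and measure zero, yet no exterior ball exists at the origin because $\R^n\setminus\overline\Omega=\emptyset$ near $0$; the statement implicitly rules out such configurations, and a complete proof must address why). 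Your claim that points of $\R^n\setminus\overline\Omega$ accumulate at $x$ does not follow from $\mathscr{H}^n(\partial\Omega)=0$ alone. So as written, item three assumes the hard part; for a self-contained argument you would need to reproduce the normal-cone analysis of \cite{dalphin_uniform_2018}, or simply leave the whole statement as the citation the paper intends it to be.
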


\section{Jacobian determinant and changes of variables on manifolds}
\label{appendix:cov}
We recall here some basic results about integration on manifolds which can be found in \cite{abrahamManifoldsTensorAnalysis1988} or \cite{sternChangeVariablesInequalities2013} for example.
Let $M$ and $N$ be two compact Riemannian $n$-dimensional manifolds with volume forms $\mu_M$ and $\mu_N$.
Let $\varphi: M\to N$ be an orientation preserving diffeomorphism.
Then, for any $v \in \mathscr{C}^1(N)$,
$$\int_{N}vd\mu_N=\int_M d\varphi^* (v\mu_N) 
$$
Besides, there exists a 
function  $J(\mu_M,\mu_N)\varphi$ on $M$, 
called the \emph{Jacobian determinant}, such that
$\varphi^*\mu_N=[J(\mu_M,\mu_N)\varphi]\mu_M$.
This implies the well-known change of variable formula
\begin{equation}\label{cdvIntegralSurf}
\int_N v d\mu_N=\int_M (v\circ \varphi) [J(\mu_M ,\mu_N)\varphi]d\mu_M.
\end{equation}

In the particular, when $M$ and $N$ are closed 2-dimensional submanifolds of $\R^3$ of class $\mathscr{C}^{1,1}$, and 
$\varphi$ is of the type $\varphi=\operatorname{Id}+\theta$ with $\theta\in W^{2,\infty}(\R^3, \R^3)$ 
and  $\Vert \theta\Vert_{W^{2,\infty}(\R^3, \R^3)}<1$ (so that $\varphi$ defines a diffeomorphism in $W^{2,\infty}(\R^3, \R^3)$), one has
\begin{equation*}
J(\mu_M ,\mu_N)\varphi=\det (\operatorname{Id}+D\theta)| ((\operatorname{Id}+D\theta)^{\top})^{-1}\nu|
\end{equation*}
with $\nu$ the outward normal to $M$. We refer for instance to \cite[Section 5.4.5]{henrotShapeVariationOptimization2018} for a shape optimization oriented proof or \cite[Chapter 5]{jostRiemannianGeometryGeometric2017} for a more differential geometry oriented presentation.


\section*{Acknowledgements} 
This work has been supported by the Inria AEX StellaCage. It was done in the framework of a collaboration between the Inria team CAGE and the startup \textit{Renaissance Fusion}\footnote{\url{https://stellarator.energy/}}. The authors would like to warmly thank Ugo Boscain, Chris Smiet, and Francesco Volpe for the numerous fascinating exchanges on 
the modeling of stellarators and their optimal design.

The first author were partially supported by the ANR Projects ``SHAPe Optimization - SHAPO'' and ``New TREnds in COntrol and Stabilization - TRECOS''. 

\bibliographystyle{abbrv} 
\bibliography{shape_opti}

\end{document}